\long\def\symbolfootnote[#1]#2{\begingroup\def\thefootnote{\fnsymbol{footnote}}
\footnote[#1]{#2}\endgroup}
\newcommand{\N}{\mathbb{N}}
\newcommand{\Z}{\mathbb{Z}}
\newcommand{\R}{\mathbb{R}}
\newcommand{\C}{\mathbb{C}}
\newcommand{\h}{\mathcal{H}}
\newcommand{\Manoa}{M\=anoa}
\newcommand{\Hawaii}{Hawai\kern.05em`\kern.05em\relax i}
\newcommand{\dg}{\text{diag}}
\newcommand{\pex}{\partial_\mathcal{E}X}
\theoremstyle{plain} \newtheorem{main}{Theorem}[section]
\theoremstyle{definition} \newtheorem{cs}{Definition}[section]
\theoremstyle{definition} \newtheorem{cprops}[cs]{Definition}
\theoremstyle{definition} \newtheorem{spac}[cs]{Definition}
\theoremstyle{remark} \newtheorem{spacelem}[cs]{Examples}
\theoremstyle{remark} \newtheorem{wmrem}[cs]{Remark}
\theoremstyle{definition} \newtheorem{ops}[cs]{Definition}
\theoremstyle{plain} \newtheorem{dislem}[cs]{Lemma}
\theoremstyle{plain} \newtheorem{elde}[cs]{Lemma}
\theoremstyle{plain} \newtheorem{ptde}[cs]{Lemma}
\theoremstyle{plain}
\theoremstyle{definition} \newtheorem{uniroe}{Definition}[section]
\theoremstyle{definition} \newtheorem{repdef}[uniroe]{Definition}
\theoremstyle{plain} \newtheorem{conslem}[uniroe]{Lemma}
\theoremstyle{remark} \newtheorem{conex}[uniroe]{Example}
\theoremstyle{definition} \newtheorem{geotdef}[uniroe]{Definition}
\theoremstyle{remark} \newtheorem{wm}[uniroe]{Remark}
\theoremstyle{plain} \newtheorem{nclem}[uniroe]{Proposition}
\theoremstyle{remark} 
\theoremstyle{definition} 
\theoremstyle{plain} 
\theoremstyle{definition} \newtheorem{constant}[uniroe]{Definition}
\theoremstyle{definition} \newtheorem{cequiv}{Definition}[section]
\theoremstyle{remark} \newtheorem{incex}[cequiv]{Example}
\theoremstyle{plain} \newtheorem{cinv}[cequiv]{Theorem}
\theoremstyle{plain} \newtheorem{cequist}[cequiv]{Lemma}
\theoremstyle{plain} \newtheorem{bij}[cequiv]{Lemma}
\theoremstyle{plain} \newtheorem{inc}[cequiv]{Proposition}
\theoremstyle{plain} \newtheorem{aprops}[cequiv]{Lemma}
\theoremstyle{plain} \newtheorem{aprops2}[cequiv]{Lemma}
\theoremstyle{plain} \newtheorem{conn}[cequiv]{Lemma}
\theoremstyle{plain} \newtheorem{fin}[cequiv]{Lemma}
\theoremstyle{definition} \newtheorem{lapdef}{Definition}[section]
\theoremstyle{remark} \newtheorem{lapex}[lapdef]{Example}
\theoremstyle{plain} \newtheorem{poslem}[lapdef]{Lemma}
\theoremstyle{plain} \newtheorem{poscor}[lapdef]{Corollary}
\theoremstyle{definition} \newtheorem{spectra0}[lapdef]{Definition}
\theoremstyle{plain} \newtheorem{laplem}[lapdef]{Lemma}
\theoremstyle{plain} \newtheorem{conlem}[lapdef]{Proposition}
\theoremstyle{plain} \newtheorem{tlap}[lapdef]{Proposition}
\theoremstyle{definition} \newtheorem{expa}[lapdef]{Definition}
\theoremstyle{plain} \newtheorem{fin2}[lapdef]{Corollary}
\theoremstyle{remark} \newtheorem{exrem}[lapdef]{Remark}
\theoremstyle{plain} \newtheorem{amenlem}{Lemma}[section]
\theoremstyle{definition} \newtheorem{amendef}[amenlem]{Definition}
\theoremstyle{plain} \newtheorem{amen}[amenlem]{Proposition}
\theoremstyle{plain} \newtheorem{ameniso}[amenlem]{Lemma }
\theoremstyle{plain} \newtheorem{infi}[amenlem]{Corollary}
\theoremstyle{definition} \newtheorem{spectra}{Definition}[section]
\theoremstyle{definition} \newtheorem{bsdef}[spectra]{Definition}
\theoremstyle{plain} \newtheorem{tgeot}[spectra]{Theorem}
\theoremstyle{plain} \newtheorem{maxgp}[spectra]{Lemma}
\theoremstyle{remark} \newtheorem{injrem}[spectra]{Remark}
\theoremstyle{plain} \newtheorem{expgt}[spectra]{Corollary}
\theoremstyle{plain} \newtheorem{atthe}{Theorem}[section]
\theoremstyle{plain} \newtheorem{cslem}{Lemma}[section]
\theoremstyle{definition} \newtheorem{bhp}[cslem]{Definition}
\theoremstyle{plain} \newtheorem{girth}[cslem]{Theorem}
\theoremstyle{plain} 
\theoremstyle{plain} 
\theoremstyle{plain} \newtheorem{kposlem}[cslem]{Lemma}
\theoremstyle{plain} \newtheorem{funprops}[cslem]{Lemma}
\theoremstyle{plain} \newtheorem{graphlem}[cslem]{Lemma}
\theoremstyle{plain} \newtheorem{pitlem}[cslem]{Lemma}
\theoremstyle{plain} 
\theoremstyle{plain} 
\theoremstyle{plain} \newtheorem{invlem}[cslem]{Lemma}
\theoremstyle{plain} \newtheorem{speclem}[cslem]{Lemma}
\theoremstyle{remark} \newtheorem{questions}{Questions}[section]
\title{Geometric Property (T)}
\author{Rufus Willett}
\address{University of \Hawaii\ at \Manoa}
  \email{rufus@math.hawaii.edu}
\author{Guoliang Yu}
\address{Texas A\&M University, and Shanghai Center for Mathematical Sciences}
  \email{guoliangyu@math.tamu.edu}
\thanks{The authors were partly supported by the U.S. National Science Foundation.}
\begin{document}

\maketitle

\begin{abstract}
This paper discusses `geometric property (T)'.  This is a property of metric spaces introduced in earlier work of the authors for its applications to $K$-theory.  Geometric property (T) is a strong form of `expansion property': in particular for a sequence  $(X_n)$ of bounded degree finite graphs, it is strictly stronger than $(X_n)$ being an expander in the sense that the Cheeger constants $h(X_n)$ are bounded below.

In this paper, we show that geometric property (T) is a coarse invariant, i.e.\ depends only on the large-scale geometry of a metric space $X$. We also discuss how geometric property (T) interacts with amenability, property (T) for groups, and coarse geometric notions of a-T-menability.  In particular, we show that property (T) for a residually finite group is characterised by geometric property (T) for its finite quotients.
\end{abstract}

%\symbolfootnote[0]{The authors were partially supported by the U.S. National Science Foundation.}

\section{Introduction}

In \cite[Section 7]{Willett:2010zh}, the current authors introduced \emph{geometric property (T)}: this is a pathological property of metric spaces designed to be an obstruction to the maximal version \cite{Gong:2008ja} of the coarse Baum-Connes conjecture in $K$-theory and higher index theory.  The treatment in \cite{Willett:2010zh} was quite brief; moreover, we expect that geometric property (T) will see some applications outside of $K$-theory and higher index theory.  It is the purpose of this paper to develop the theory more fully.

Throughout we will work with discrete metric spaces $X$ of \emph{bounded geometry}: this means that if $B(x;r)$ denotes the ball of radius $r$ about $x\in X$, then the quantity $\sup_{x\in X}|B(x;r)|$ is finite for all $r$.  We allow our metrics to take infinite distances.  For applications, the two most interesting examples of such spaces are: the vertex set of a graph equipped with the edge metric (for example, the Cayley graph of a finitely generated group); or the disjoint union of a sequence $(X_n)$ of finite graphs, where each $X_n$ has the edge metric, and the distance between different graphs is infinity (the coarse geometry of such a space is essentially the `asymptotic geometry' of the sequence).  The bounded geometry assumption amounts to the existence of an absolute bound on the degree of all vertices in either case.

Geometric property (T) for a discrete metric space $X$ says that unitary representations of the Gromov-Roe \emph{translation algebra} $\C_u[X]$ (see \cite[page 262]{Gromov:1993tr} and \cite[Chapter 4]{Roe:2003rw}) that have almost invariant vectors must have invariant vectors: see Definition \ref{geotdef} below.  This is a direct analogue of property (T) for a (discrete) group $\Gamma$, which says that any unitary representation of the group algebra $\C[\Gamma]$ having almost invariant vectors actually has invariant vectors.  In order to make sense of `invariant' and `almost invariant' in the metric space case one has to do a little work, but the basic idea is the same as in the group case.

Geometric property (T) can also be characterised (Proposition \ref{tlap} below) in terms of a spectral gap property for a \emph{Laplacian} operator $\Delta$ in $\C_u[X]$, much as was done by Valette for groups \cite[Theorem 3.2]{Valette:1984wy}.  In the case that $X$ is a graph (or built from a sequence of graphs), $\Delta$ simply is the graph Laplacian.  This was our original definition in \cite[Section 7]{Willett:2010zh}, but we found the almost invariant vectors version more convenient to work with in the current paper.\\

In this paper we establish the machinery needed to make rigorous sense of the above definitions.  We then prove the following results.
\begin{main}\label{main}
\begin{enumerate}
\item Geometric property (T) is a coarse invariant (Theorem \ref{cinv}).
\item Geometric property (T) for a sequence of finite graphs implies that the sequence is an expander (Corollary \ref{fin2}), but is strictly stronger than this (Corollary \ref{expgt}),
\item An infinite connected graph $X$ has geometric property (T) if and only if it is not amenable (Corollary \ref{infi}).
\item Let $\Gamma$ be a finitely generated discrete group, and $\Gamma=\Gamma_0\unrhd\Gamma_1\unrhd\cdots$ be a sequence of finite index normal subgroups such that $\cap_n\Gamma_n$ is the trivial subgroup.  Then $\Gamma$ has property (T) if and only if the sequence $(\Gamma/\Gamma_n)$ of finite Cayley graphs\footnote{Defined with respect to some fixed generating of $\Gamma$.} has geometric property (T) (Theorem \ref{tgeot}).
\item A sequence of finite graphs $(X_n)$ with geometric property (T) cannot also admit a fibered coarse embedding into Hilbert space \cite{Chen:2012uq}, or have the boundary Haagerup property \cite{Finn-Sell:2012fk}, unless $\sup_n|X_n|$ is finite (Theorem \ref{atthe}).
\end{enumerate}
\end{main}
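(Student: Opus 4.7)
The plan is to exploit two equivalent characterizations of geometric property (T): the original definition in terms of almost invariant versus invariant vectors for representations of the translation algebra $\C_u[X]$ (Definition \ref{geotdef}), and the spectral-gap formulation for a Laplacian $\Delta\in\C_u[X]$ promised by Proposition \ref{tlap}. Which picture is more convenient varies from part to part; the five assertions should be proved largely independently, sharing this common toolkit.

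For part (1), I would establish a correspondence between representations of $\C_u[X]$ and of $\C_u[Y]$ along any coarse equivalence $f\colon X\to Y$ that matches both almost invariant and genuinely invariant vectors. Since such an $f$ need not be bijective, this has to be set up at a Morita/stabilization level, working with suitable ampliations $M_k(\C_u[X])$ whose size $k$ depends on the propagation of $f$ and on point-preimage sizes. For part (2), the Laplacian picture shows that geometric property (T) gives a uniform lower bound for the first nontrivial eigenvalue of each graph Laplacian on $\ell^2(X_n)$, hence $h(X_n)\geq c>0$ by Cheeger's inequality; the strict-implication half would be witnessed by an explicit expander sequence admitting a (fibered) coarse embedding into Hilbert space, which by part (5) forbids geometric property (T). For part (3), non-amenability of an infinite connected graph $X$ yields a spectral gap for $\Delta$ on $\ell^2(X)$ via a Kesten-type argument, and the content is to upgrade this from one representation to all representations of $\C_u[X]$ simultaneously; conversely, amenability produces F{\o}lner sets whose normalized characteristic functions are almost $\Delta$-invariant in $\ell^2(X)$, while an infinite connected graph admits no invariant $\ell^2$ vector, giving a witness representation against geometric property (T).

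For part (4), I would embed $\C[\Gamma]$ into the translation algebra of $\bigsqcup_n \Gamma/\Gamma_n$ in such a way that any unitary representation of $\Gamma$ which factors through one of the quotients extends canonically to a representation of the translation algebra; residual finiteness together with $\cap_n\Gamma_n=\{e\}$ then ensures that enough representations are captured for property (T) on the group side to match geometric property (T) on the metric side, in both directions. For part (5), a fibered coarse embedding into Hilbert space or the boundary Haagerup property produces Hilbert-space-valued kernels on $X$ that are uniformly proper yet approximately invariant at bounded scales; these can be assembled into a representation of $\C_u[X]$ with almost invariant but no invariant vectors, contradicting geometric property (T) unless $\sup_n|X_n|<\infty$.

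I expect part (1) to be the main technical obstacle, because translation algebras are sensitive to the precise metric, so setting up a representation-level correspondence along a non-bijective coarse equivalence requires careful propagation and ampliation bookkeeping, with uniform control over \emph{all} representations at once rather than just the regular one. Parts (2), (3) and (5) should then follow relatively cleanly from the Laplacian characterization, while part (4) is largely a matching of representation theories once the correct embedding of $\C[\Gamma]$ into the translation algebra of $\bigsqcup_n \Gamma/\Gamma_n$ has been pinned down.
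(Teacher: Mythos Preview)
Your high-level plan is broadly aligned with the paper, but the execution differs in several places, and in one or two of them your proposed route is genuinely harder or incomplete.

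\textbf{Part (1).} The paper does not use $M_k$-ampliations. It first factors any coarse equivalence as (bijective coarse equivalence)$\,\circ\,$(inclusion of a coarsely dense subspace); the bijective case is a one-line isomorphism of translation algebras. For the dense inclusion $Y\subseteq X$ the paper sets up a corner picture $A\,\C_u[X]\,A\cong \C_u[Y]$ with $A$ a \emph{full} projection, exactly the Morita idea you mention, but the essential subtlety---which your sketch does not anticipate---is that the obvious choice $A=\chi_Y$ does not behave well with respect to constant vectors. The paper instead takes $A$ to be an averaging projection over a partition $\{U_y\}_{y\in Y}$ and introduces a diagonal multiplier $N(x)=|U_{y(x)}|^{1/2}$; the crucial technical lemma is that $N$ carries $\h^Y_c$ bijectively onto $\h^X_c$. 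Without an analogue of this constant-vector bookkeeping, a Morita/ampliation argument transfers representations but not the pair $(\h_c,\h_c^\perp)$, and geometric property (T) does not obviously follow.

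\textbf{Part (3).} Your proposed direction ``upgrade the $\ell^2$ spectral gap to all representations'' is not what the paper does, and it is not clear how to do it directly. The paper argues contrapositively: if $0\in\sigma_{\max}(\Delta^E)$ then, realizing $0$ as an eigenvalue in some representation, the resulting constant vector gives a vector state on $\C_u[X]$ whose restriction to $l^\infty(X)$ is an invariant mean. Thus non-amenability forces $0\notin\sigma_{\max}(\Delta^E)$. The converse (amenable $\Rightarrow$ $0$ non-isolated in $\sigma_{\max}$) uses essentially your F{\o}lner idea, in $\ell^2(X)$.

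\textbf{Part (4).} You are right that the argument hinges on the embedding $\C[\Gamma]\hookrightarrow\C_u[X]$, but ``enough representations are captured'' needs a precise form: the paper proves that the induced map $C^*_{\max}(\Gamma)\to C^*_{u,\max}(X)$ is \emph{injective}, so the maximal spectra of the group Laplacian and $\Delta^E$ coincide. The injectivity is obtained by passing to $(l^\infty(X)/C_0(X))\rtimes_{\max}\Gamma$ and splitting via a $\Gamma$-invariant state on $l^\infty(X)/C_0(X)$ (built from the normalized counting measures on $X_n$). Your sketch does not isolate this step.

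\textbf{Part (2), strictness.} Your plan to invoke part (5) would work, but the paper instead deduces strictness from property~($\tau$): there are pairs $(\Gamma,(\Gamma_n))$ with~($\tau$) (hence expanders) where $\Gamma$ is, say, free and so lacks property (T); by part (4) the associated box space then lacks geometric property (T).

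\textbf{Part (5).} Your plan matches the paper: from the negative-type kernel one builds, via Schoenberg, a one-parameter family of positive-type kernels and hence representations $\pi_t$ of $\C_u[X]$ in which $\Delta^E$ has arbitrarily small spectrum as $t\to 0$; a separate combinatorial argument (existence of ``far-moving'' bijective partial translations on each $X_n$) shows these representations have no constant vectors.
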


A few remarks are in order.  Points (2) and (3) together suggest that geometric property (T) is not interesting for a single connected graph, but that it has serious content for a sequence of finite graphs.  Points (1) and (4) have the following consequence, which is perhaps surprising: for a residually finite group, property (T) can be characterised by the geometry of the finite quotients of the group; this contrasts with the well-known fact that property (T) for the group itself is not a geometric invariant \cite[Section 3.6]{Bekka:2000kx}.  Property (5) is a strong analogue in coarse geometry of the well-known incompatibility of property (T) and a-T-menability for groups.

\subsection*{Outline}
To facilitate algebraic computations involving $\C_u[X]$ we use the language of abstract coarse structures \cite{Roe:2003rw}, rather than the metric space language of the introduction, throughout the body of the paper; Section \ref{cssec} recalls the basic definitions of coarse structures and proves some combinatorial lemmas.  Section \ref{transsec} introduces the translation algebra $\C_u[X]$, the notion of invariant vectors in its representations, and geometric property (T).   Section \ref{cinvsec} proves that geometric property (T) is a coarse invariant; we could not find a short proof of this result and this is probably the most technical part of the paper.   Section \ref{lapsec} defines general combinatorial Laplacian operators, and characterises invariant vectors and geometric property (T) in terms of them.

Having established the basic properties of geometric property (T), the next three sections discuss the relationship between geometric property (T) and some other properties.  Section \ref{amensec} discusses the relationship of representations of $\C_u[X]$ with amenability, and uses this to characterise geometric property (T) for spaces as in part (3) of Theorem \ref{main}.  Section \ref{exsec} studies the relationship with property (T) for groups, proving part (4) of Theorem \ref{main}.  Section \ref{atmensec} discusses the relationship with the coarse a-T-menability properties in part (5) of Theorem \ref{main}.

We conclude the paper with some natural open questions in Section \ref{qsec}.

\subsection*{Acknowledgements}

We would like to thank Erik Guentner, J\'{a}n \v{S}pakula, and Romain Tessera for useful discussions on aspects of this work.   We would also like to thank Jintao Deng for pointing out some algebraic errors in an earlier version.  The first author would like to thank the Shanghai Center for Mathematical Sciences for its hospitality during part of the work on this paper.

\section{Coarse structures and some combinatorics}\label{cssec}

In this section we recall the definition of a coarse structure on a set $X$.  We then recall the definition of partial translation, and prove some combinatorial lemmas about decomposing general controlled sets into partial translations.

If $X$ is a set and $E,F$ are subsets of $X\times X$, then the \emph{composition} of $E$ and $F$, denoted $E\circ F$, is the set
$$
E\circ F:=\{(x,y)\in X\times X~|~\text{ there exists $z$ such that } (x,z)\in E \text{ and } (z,y)\in F\}
$$
and the \emph{inverse} of $E$ is
$$
E^{-1}:=\{(x,y)\in X\times X~|~(y,x)\in E\}.
$$
For $n\geq 1$, we use the shorthand
$$
E^{\circ n}:=\underbrace{E\circ \cdots \circ E}_{n}.
$$
Finally, we will write $\dg(E)$ for the `diagonal part' of $E$, that is
$$
\dg(E):=E\cap \{(x,x)\in X\times X~|~x\in X\}.
$$

\begin{cs}\label{cs}
Let $X$ be a set.  A \emph{coarse structure} on $X$ consists of a collection $\mathcal{E}$ of subsets of $X\times X$ such that:
\begin{enumerate}
\item the diagonal $\{(x,x)\in X\times X~|~x\in X\}$ is in $\mathcal{E}$;\footnote{This condition is not always assumed: coarse structures satisfying this condition are sometimes called \emph{unital}.}
\item if $E\in \mathcal{E}$ and $F\subseteq E$, then $F\in \mathcal{E}$;
\item if $E,F\in \mathcal{E}$ then $E\circ F\in \mathcal{E}$;
\item if $E\in \mathcal{E}$, then $E^{-1}\in\mathcal{E}$.
\end{enumerate}
The members of $\mathcal{E}$ are called \emph{controlled sets} for the coarse structure.  

A set $X$ equipped with a coarse structure is called a \emph{coarse space}.
\end{cs}  

The motivating example comes when $X$ is a metric space, and a set is controlled if and only if it is a subset of a `tube' $\{(x,y)\in X\times X~|~d(x,y)\leq r\}$ for some $r>0$.  

The following definition lists some additional properties of controlled sets and coarse spaces that we will need. 

\begin{cprops}\label{cprops}
Let $X$ be a coarse space, and $\mathcal{E}$ the coarse structure on $X$.  
\begin{itemize}
\item A controlled set $E$ is called \emph{symmetric} if $E=E^{-1}$.
\item The coarse structure is said to have \emph{bounded geometry} if for any controlled set $E$ there is a bound $M=M(E)$ such that for any $x\in X$,
$$
|\{y\in X~|~(x,y)\in E\cup E^{-1}\}|\leq M.
$$
\item A controlled set $E$ is said to be \emph{generating} if for any controlled set $F$ there exists $n$ such that $F\subseteq E^{\circ n}$.  A coarse space is said to be \emph{monogenic} if there exists a generating controlled set for the coarse structure.
%\item A monogenic coarse space $X$ is said to be \emph{weakly bounded} if for any\footnote{It is easy to see that `for any' could be replaced by `there exists' without altering the definition.} generating set $E$ there exists an $n$ such that $E^{\circ n}$ contains all other controlled sets.
\item Two elements $x,y$ of $X$ are \emph{in the same coarse component} of $X$ if the set $\{(x,y)\}$ is controlled.  `Being in the same coarse component' defines an equivalence relation on $X$, and the equivalence classes are called \emph{coarse components}.  If there is only a single coarse component, $X$ is said to be \emph{coarsely connected}.  
\item If $Y$ is a subset of $X$ it is itself a coarse space with the controlled sets being the intersection of the controlled sets for $X$ with $Y\times Y$.  This is called the \emph{induced coarse structure}, and will be used implicitly many times below.
\end{itemize}
\end{cprops}

\begin{spac}\label{spac}
We will say \emph{$X$ is a space} as an abbreviation for `$X$ is a bounded geometry, monogenic coarse space, with at most countably many coarse components'.  
\end{spac}

Note that spaces are automatically countable; this and being monogenic implies that the coarse structure on a space always comes from a metric \cite[Section 2.5]{Roe:2003rw}, with possibly infinite distances.  Nonetheless, the language of abstract coarse structures is more convenient for the computations in this paper.

The reader will probably find it useful to keep the following examples in mind.

\begin{spacelem}\label{spacelem}
Let $X$ be the vertex set of an undirected graph, and $E$ the set of edges, which we consider as a symmetric subset of $X\times X$.  The coarse structure generated by the set $E$ is monogenic, and is bounded geometry if and only if there is a uniform  bound on the degrees of all vertices in $X$.  The coarse components of $X$ are exactly the (vertex sets of the) connected components of the underlying graph.  The coarse structure above is the same as that defined by the \emph{edge metric}, which sets the distance between two vertices to be the shortest number of edges in a path between them, and infinity if no such path exists.  

Particularly important classes of examples are Cayley graphs of discrete groups, and discretisations of Riemannian manifolds.   Another import example for us occurs when $X$ is a disjoint union $X=\sqcup X_n$ of finite connected graphs: examples of this form are important in coarse geometry as they are relatively easy to analyse, and as questions about general spaces can often be reduced to questions about spaces of this form.
\end{spacelem}

\begin{wmrem}\label{wmrem}
Let $X=\sqcup X_n$ be a disjoint union of finite connected graphs as in Example \ref{spacelem} above.  It is common in coarse geometry (in order to avoid infinite-valued metrics) to metrise such spaces with any metric that restricts to the edge metric on the individual $X_n$ and satisfies $d(X_n,X\setminus X_n)\to\infty$ as $n\to\infty$.  The corresponding coarse structure is not monogenic, but is `weakly monogenic' in the following sense.   A controlled set $E$ is said to be a \emph{weak generating set} for the coarse structure if for any controlled set $F$ there exists $n\in \N$ such that $F\setminus E^{\circ n}$ is finite.  A coarse space is said to be \emph{weakly monogenic} if there is a weak generating set for the coarse structure.  Most of the results of this paper hold for weakly monogenic spaces, up to minor adjustments (see Remark \ref{wm} below), so can be applied to such spaces directly.  
\end{wmrem}

The following definition is based on \cite[Definition 8]{Brodzki:2007mi}.  

\begin{ops}\label{ops}
Let $X$ be a space.  A \emph{partial translation} on $X$ consists of the following data: subsets $A$ and $B$ of $X$ and a bijection $t:A\to B$ such that the \emph{graph}\footnote{We have defined the graph of $t$ the `wrong way round' to better match matrix multiplication later.} of $t$
$$
\text{graph}(t):=\{(t(x),x)\in X\times X~|~x\in A\}
$$
is controlled.  The subset $A$ is called the \emph{support} of $t$, $B$ its \emph{range}.   The \emph{inverse} of a partial translation $t:A\to B$ is the partial translation $t^{-1}:B\to A$ defined by inverting the bijection $t$.

A controlled set $E$ is called \emph{elementary} if there exists a (necessarily unique) partial translation $t:A\to B$ such that $E=\text{graph}(t)$.  An elementary controlled set is called \emph{antisymmetric} if the domain and range of the corresponding partial translation do not intersect\footnote{Equivalently, the images of the two coordinate projections are disjoint when restricted to $E$.}.
\end{ops}

In the remainder of this section we prove some combinatorial lemmas about decomposing controlled sets into partial translations; these will be useful for algebraic computations later in the paper.

The following very general lemma is probably well-known.

\begin{dislem}\label{dislem}
Let $A$ be a set, and $B$ and $C$ be subsets of $A$.  Let $t:B\to C$ be a bijection such that $t(a)\neq a$ for all $a\in B$.  Then there exists a decomposition
$$
B=B_0\sqcup B_1\sqcup B_2
$$
of $B$ into (at most) three disjoint subsets such that $t(B_i)\cap B_i=\varnothing$ for all $i\in \{0,1,2\}$.
\end{dislem}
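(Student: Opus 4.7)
The plan is to treat this as a graph-coloring problem. Define an undirected graph $G$ with vertex set $B$ by declaring two distinct elements $a, b \in B$ adjacent precisely when $t(a) = b$ or $t(b) = a$; the hypothesis $t(a) \neq a$ ensures $G$ has no loops. Any vertex $a \in B$ has at most two neighbors in $G$: the possible ``successor'' $t(a)$ if it happens to lie in $B$, and the possible ``predecessor'' $t^{-1}(a)$ if $a \in C$ (in which case $t^{-1}(a) \in B$ since $t$ is a bijection $B \to C$). Hence $G$ has maximum degree at most $2$.

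A standard combinatorial fact is that a graph of maximum degree at most $2$ is a disjoint union of components, each of which is either an isolated vertex, a finite path, a one-sided infinite ray, a two-sided infinite path, or a finite cycle. I would then exhibit a proper $3$-coloring of each such component: paths (of any length) and even-length cycles are $2$-colored by alternation, whereas an odd cycle $a_1 a_2 \cdots a_{2k+1} a_1$ is $3$-colored by alternating $0, 1$ on $a_1, \ldots, a_{2k}$ and assigning color $2$ to $a_{2k+1}$. Combining these colorings across all components yields a map $c : B \to \{0, 1, 2\}$ with $c(a) \neq c(b)$ for every edge $\{a,b\}$ of $G$; set $B_i := c^{-1}(i)$ for $i \in \{0,1,2\}$.

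To verify the conclusion, suppose for some $i$ there were $a \in B_i$ with $t(a) \in B_i$. Then $t(a) \in B$, and $t(a) \neq a$ by hypothesis, so $\{a, t(a)\}$ is an edge of $G$ whose endpoints share a color, contradicting the properness of $c$. Thus $t(B_i) \cap B_i = \varnothing$ for each $i$. The only point that needs real care is the structure theorem for graphs of maximum degree $2$ (and the choice, possibly invoking dependent choice on infinite components, of a starting vertex for the alternating coloring); but this is entirely standard, so I do not anticipate a serious obstacle, and the combinatorial setup together with the final verification should constitute the bulk of the work.
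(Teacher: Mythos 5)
Your proof is correct and takes essentially the same approach as the paper's: both decompose $B$ according to the path/cycle structure induced by $t$ and then properly $3$-colour, alternating two colours and reserving the third for odd cycles. The only difference is that the paper first reduces to a fixed-point-free bijection $t:B\to B$ (so that only bi-infinite orbits and finite cycles occur), whereas you handle finite paths and one-sided rays directly in the functional graph; this is a cosmetic variation, not a different argument.
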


The example $A=B=C=\{1,2,3\}$, $t$ a cyclic permutation, shows that one cannot get away with less than three subsets.

\begin{proof}
Let $s:C\to B$ be any bijection which is the identity on $C\cap B$.  Replacing $t$ with $s\circ t$, it is not difficult to see that it suffices to prove the following statement: if $B$ is a set and $t:B\to B$ is a bijection such that $t(b)\neq b$ for all $b\in B$, then there exists a decomposition $B=B_0\sqcup B_1\sqcup B_2$ such that $t(B_i)\cap B_i=\varnothing$ for all $i\in \{0,1,2\}$.  We now prove this.

The bijection $t:B\to B$ gives rise to an action of $\Z$, which partitions $B$ into orbits.  As $t(b)\neq b$ for all $b\in B$, each orbit for this action has one of the following forms.  
\begin{enumerate}[(1)]
\item $\{...,t^{-2}(b),t^{-1}(b),b=t^0(b),t(b),t^2(b),...\}$ (going on infinitely in both directions) for some $b\in B$.
\item $\{b=t^0(b),t(b),...,t^n(b)\}$ for some $n\geq 1$ and $b\in B$ such that $t^{n+1}(b)=b$.
\end{enumerate}
Define subsets $B_0$, $B_1$ and $B_2$ of $B$ as follows.  For each orbit, fix once and for all a representation of one of the types above.  For an orbit of type (2) with $n$ even and $i=n$, put $t^i(b)$ into $B_2$.  In all other cases, put $t^i(b)$ into $B_{i\text{ mod } 2}$ (where $i \text{ mod }2$ is always construed as $0$ or $1$).  A routine case-by-case analysis shows that this works.
\end{proof}

\begin{elde}\label{elde}
Let $E\subseteq F$ be symmetric controlled sets on a space $X$.  Then there exist elementary controlled sets $E_1,...,E_n$ such that $F$ is the disjoint union
$$
F=E\sqcup \dg(F\backslash E)\sqcup \bigsqcup_{i=1}^n(E_i \sqcup E_i^{-1}).
$$
We may assume moreover that each $E_i$ is antisymmetric. 
\end{elde}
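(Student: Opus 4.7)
The plan is to reduce the decomposition to an edge-coloring problem on a bounded-degree graph built from $F$. First I would isolate the symmetric, off-diagonal part of $F\setminus E$ by setting
$$
G := F \setminus (E \cup \dg(F)),
$$
so that $F = E \sqcup \dg(F\setminus E) \sqcup G$ (a short bookkeeping check using $\dg(E)\subseteq E\subseteq F$); since $F$ and $E$ are symmetric and controlled, so is $G$, and by construction $G$ meets the diagonal trivially. The task then reduces to writing $G$ as a disjoint union $\bigsqcup_{i=1}^n(E_i\sqcup E_i^{-1})$ of pairs consisting of an antisymmetric elementary controlled set and its inverse.

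Next I would view $G$ as the edge set of a simple undirected graph $\Gamma$ on the countable vertex set $X$, declaring $\{x,y\}$ to be an edge precisely when $(x,y)\in G$. Bounded geometry supplies an integer $M$ with $|\{y\in X : (x,y)\in G\}|\leq M$ for every $x\in X$, so $\Gamma$ has maximum degree at most $M$; consequently the line graph $L(\Gamma)$ has maximum degree at most $2M-2$. A greedy vertex coloring of the countable graph $L(\Gamma)$ therefore yields a proper edge coloring of $\Gamma$ with some finite number $n\leq 2M-1$ of color classes $C_1,\ldots,C_n$, each of which is a matching in $\Gamma$.

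With these matchings in hand, I would fix a well-ordering of $X$ and, for each $i$, set
$$
E_i := \{(y,x)\in G : \{x,y\}\in C_i \text{ and } x<y\}.
$$
Each $E_i$ is a subset of $G$, hence controlled; it is the graph of the partial translation sending the smaller endpoint of each pair in $C_i$ to the larger one, so it is elementary; and because $C_i$ is a matching, no vertex of $X$ lies in two of its pairs, which forces the support and range of this partial translation to be disjoint. Thus $E_i$ is antisymmetric. The identity $G = \bigsqcup_{i=1}^n(E_i\sqcup E_i^{-1})$ then follows: the color classes partition the edges of $\Gamma$, and $E_i\cap E_i^{-1}=\varnothing$ by antisymmetry.

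The one point requiring any care is the edge-coloring step in the potentially infinite setting; but since bounded geometry together with monogenicity and countably many coarse components forces $X$ (and hence $L(\Gamma)$) to be countable, the greedy argument runs without any appeal to Vizing's theorem or compactness for infinite graphs. Everything else is straightforward verification, and Lemma \ref{dislem} is not needed for this particular decomposition (the matching structure already produces antisymmetric pieces directly).
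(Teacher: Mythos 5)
Your argument is correct, but it follows a genuinely different route from the paper's. The paper proceeds by repeatedly extracting a \emph{maximal} elementary subset $E_{i+1}$ of what remains of $F$ subject only to the weaker condition $E_{i+1}\cap E_{i+1}^{-1}=\varnothing$, uses maximality plus bounded geometry to show the process terminates after finitely many steps, and only then invokes Lemma \ref{dislem} to split each resulting piece into three antisymmetric parts. You instead pass to the simple undirected graph on $X$ whose edges are the off-diagonal symmetric remainder $G$, properly edge-color it greedily using the degree bound $M$ from bounded geometry, and orient each matching by a well-ordering of $X$; since a vertex lies in at most one edge of a matching, the domain and range of each oriented matching are automatically disjoint, so antisymmetry comes for free and Lemma \ref{dislem} is bypassed entirely. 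Both proofs lean on bounded geometry in the same essential place (to bound the number of pieces $n$), but yours yields the explicit bound $n\leq 2M-1$ and cleanly separates the combinatorics (edge coloring) from the coarse-geometric bookkeeping, whereas the paper's maximality argument stays entirely inside the language of controlled sets and reuses Lemma \ref{dislem}, which it needs elsewhere anyway. Your identity $F=E\sqcup\dg(F\setminus E)\sqcup G$ with $G=F\setminus(E\cup\dg(F))$ checks out since $\dg(E)\subseteq E$, and the greedy coloring is legitimate for a countable bounded-degree graph, as you note. The argument is complete.
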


\begin{proof}
Inductively define $E_0=E$ and  $E_{i+1}$ to be any maximal elementary subset of 
$$
F\backslash(E\cup \dg(F\backslash E)\cup (E_1\cup E_1^{-1})\cup\cdots\cup (E_i\cup E_i^{-1}))
$$
such that $E_{i+1}\cap E_{i+1}^{-1}=\varnothing$.  

Assume that $E_{i+1}$ is not empty, and assume that $(x,y)$ is in $E_{i+1}$, noting that this forces $x\neq y$.  Then  maximality of each $E_j$ forces the existence of distinct $y_0,...,y_i$ such that $(x,y_j)$ is in $E_j\cup E_j^{-1}$ for each $j=0,...,i$.  In particular, 
$$
|\{z\in X~|~(x,z)\in F\}|\geq i+2,
$$
which is impossible for $i$ suitably large by the bounded geometry condition.  Thus $F=E\cup \dg(F\backslash E)\cup (E_1\cup E_1^{-1})\cdots \cup (E_n \cup E_n^{-1})$ for some $n$.  

Finally, note that Lemma \ref{dislem} applied to the partial translation underlying each $E_i$ decomposes $E_i$ into three antisymmetric parts.  Decomposing further, we may thus assume that each $E_i$ is antisymmetric.
\end{proof}

\begin{ptde}\label{ptde}
Let $t:A\to B$ be a partial translation on a space $X$ and $E$ be a controlled set for $X$.  Assume that
$$
E^{\circ n}\supseteq \text{graph}(t)
$$
for some $n\geq 1$.

Then there exists a decomposition $A=A_1\sqcup \cdots \sqcup A_m$ such that if $t_i$ is the restriction of $t$ to $A_i$ then there exist partial translations $\{s_i^j\}_{i=1,j=1}^{m,~~~n}$ such that
\begin{itemize}
\item $t_i=s_i^1\circ \cdots \circ s_i^n$;
\item $\text{graph}(s_i^j)\subseteq E$ for all $i=1,...,m$ and $j=1,...,n$;
\item for each $i$ and each $j=1,...,n-1$, the range of $s_i^j$ is equal to the domain of $s_i^{j+1}$.
\item for each $i$ and $j=1,...,n$, either $s^j_i$ is the identity map, or the range of $s^j_i$ is disjoint from its support.
\end{itemize}
\end{ptde}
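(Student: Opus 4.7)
The plan is to use the axiom of choice to pick, for each $x\in A$, a length-$n$ path through $E$ from $x$ to $t(x)$, then to decompose $A$ into finitely many pieces on which these paths are coherent enough to define genuine partial translations, and finally to refine the decomposition further using Lemma \ref{dislem}.

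Concretely, since $\text{graph}(t)\subseteq E^{\circ n}$, for each $x\in A$ choose intermediate points $y_1(x),\ldots,y_{n-1}(x)$ in $X$ and set $y_0(x):=t(x)$, $y_n(x):=x$, so that $(y_{j-1}(x),y_j(x))\in E$ for every $j\in\{1,\ldots,n\}$. This yields functions $y_0,y_1,\ldots,y_n:A\to X$. The intended factors on a piece $A'$ will be the partial translations $s^j: y_{n-j+1}(x)\mapsto y_{n-j}(x)$ for $j=1,\ldots,n$; for these to be well defined it is necessary that each $y_j$ restrict to an injection on $A'$.

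The main step is therefore to partition $A$ into finitely many pieces on which all the $y_j$ are simultaneously injective. Form the graph on $A$ whose edges are pairs $\{x,x'\}$ with $x\neq x'$ and $y_j(x)=y_j(x')$ for some $j$. For fixed $j$ and $y\in X$, any $x$ with $y_j(x)=y$ satisfies $(y,x)\in E^{\circ(n-j)}$, so the fibre $y_j^{-1}(y)$ is bounded in size by the bounded-geometry constant for $E^{\circ(n-j)}$. Summing over $j$, this conflict graph has bounded degree, and a greedy colouring produces the required partition $A=A_1'\sqcup\cdots\sqcup A_p'$.

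On each piece $A_i'$, the maps $s_i^j:y_{n-j+1}(x)\mapsto y_{n-j}(x)$ are well-defined partial translations with graphs in $E$, with correctly matched ranges and supports, and with composition equal to $t|_{A_i'}$. To enforce the final bullet, refine each $A_i'$ further as follows: for each $j$, split it into the subset where $y_{n-j}(x)=y_{n-j+1}(x)$ (on which $s_i^j$ restricts to an identity) and its complement, and on the complement apply Lemma \ref{dislem} to the induced fixed-point-free bijection to subdivide the support of $s_i^j$ into three pieces with self-disjoint images, pulling these pieces back to subsets of $A_i'$ via the injective map $y_{n-j+1}|_{A_i'}$. The common refinement over $j=1,\ldots,n$ is the desired final decomposition of $A$. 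The only substantive step is the bounded-degree argument in the conflict graph; the remaining steps are essentially verification.
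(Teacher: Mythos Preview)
Your proof is correct and follows essentially the same strategy as the paper's: choose length-$n$ $E$-paths for each $x\in A$, then use bounded geometry to partition $A$ into finitely many pieces on which the intermediate-point functions are injective, and arrange the ``identity or disjoint range/support'' condition on top of that. The differences are purely organizational. The paper proceeds inductively in $j$, at each step simultaneously forcing $r_j$ to be injective and imposing the dichotomy $r_j=r_{j-1}$ or $r_j(A_i)\cap r_{j-1}(A_i)=\varnothing$ (the justification for the latter is left somewhat implicit there). You instead handle injectivity for all $j$ at once via a bounded-degree conflict graph and greedy colouring, and then obtain the fourth bullet in a separate pass by explicitly invoking Lemma~\ref{dislem} and taking a common refinement over $j$. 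Both routes yield the same conclusion with comparable bounds on $m$; your version has the advantage of making the role of Lemma~\ref{dislem} transparent.
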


\begin{proof}
For each $x\in A$ the pair $(t(x),x)$ is contained in $E^{\circ n}$, whence we may choose points $x=r_0(x),r_1(x),\cdots r_n(x)=t(x)$ such that for each $j=1,...,n$, the pair $(r_{j}(x),r_{j-1}(x))$ is in $E$.   In this way, define functions $r_j:A\to X$.  

Note that the bounded geometry assumption and the fact that the graph of $r_1$ is contained in $E$ implies that there exists $N_1$ such that $A$ decomposes into $N_1$ sets $A^1_1,...,A^1_{N_1}$ such that the following hold:
\begin{itemize}
\item $r_1$ is a bijection restricted to each $A_i^1$;
\item either $r_1(x)=r_0(x)$ for all $x\in A_i^1$, or $r_1(A_i^1)\cap r_0(A_i^1)=\varnothing$.
\end{itemize}
Similarly, as the graph of $r_2$ is contained in $E^{\circ 2}$ there exists $N_2$ such that each $A^1_i$ decomposes into at most $N_2$ sets $A^2_{ij}$, for which the restriction of $r_2$ to each $A^2_{ij}$ is a bijection, and such that either $r_2(x)=r_1(x)$ for all $x\in A_{ij}^2$, or $r_2(A^2_{ij})\cap r_1(A^2_{ij})=\varnothing$.   Continuing in this way, we get a decomposition $A=A_1,...,A_m$, where $m$ is at most $N_1N_2\cdots N_n$, such that:
\begin{enumerate}
\item each $r_j$ is a bijection when restricted to each $A_i$;
\item for each $i,j$ either $r_j(x)=r_{j-1}(x)$ for all $x\in A_i$, or $r_j(A_i)\cap r_{j-1}(A_i)=\varnothing$.
\end{enumerate}

Define for each $i=1,...,m$ a function $s_i^j:r_{j-1}(A_i)\to r_{j}(A_i)$ by the stipulation 
$$
s_i^j(r_{j-1}(x))=r_j(x);
$$ 
as each $r_j$ is injective on each $A_i$, this is well-defined and bijective.  It is not difficult to see that these functions $s_i^j$ have the right properties.
\end{proof}

%\begin{ptde2}\label{ptde2}
%Let $F$ be a controlled set on a space $X$.  Then for any $n$ there exist elementary controlled sets $F_1,...,F_m$ such that 
%$$
%F^{\circ n}=\bigsqcup_{i=1}^m F_i
%$$
%and such that if $t_i:A_i\to B_i$ is the partial translation corresponding to $F_i$ then there exist partial translations $s_i^j$, $j=1,...,n$ such that
%\begin{itemize}
%\item $t_i=s_i^n\circ \cdots \circ s_i^1$;
%\item $\text{graph}(s^j_i)\subseteq F$ for all $i=1,...,m$ and $j=1,...,n$;
%\item for each $i$ and $j=1,...,n-1$, the range of $s^j_i$ is equal to the support of $s^{j+1}_i$;
%\item for each $i$ and $j=1,...,n-1$, either $s^j_i$ is the identity map, or the range of $s^j_i$ is disjoint from its support.
%\end{itemize}
%\end{ptde2}

%\begin{proof}
%Using Lemma \ref{elde} (in the special case $E=\varnothing$, we may decompose $F^{\circ n}$ as
%$$
%F^{\circ n}=\dg(F)\sqcup \bigsqcup_{i=1}^n(E_i \sqcup E_i^{-1})
%$$
%where each $E_i$ is elementary.  We may then use Lemma \ref{ptde} to decompose each $E_i$ into elementary sets $F_j$ with the right properties.
%t\end{proof}

\section{Translation algebras and geometric property (T)}\label{transsec}

In this section we introduce translation algebras and define geometric property (T) in terms of their (`unitary') representation theory.

 Throughout this section, $X$ denotes a space in the sense of Definition \ref{spac}.

\begin{uniroe}\label{uniroe}
The \emph{translation algebra}, or \emph{algebraic uniform Roe algebra} of $X$, denoted $\C_u[X]$, is the collection of all $X$-by-$X$ indexed matrices $T=(T_{xy})_{x,y\in X}$ with entires in $\C$ such that
$$
\sup_{x,y\in X}|T_{xy}|
$$
is finite, and such that for any $T\in \C_u[X]$ the \emph{support} of $T$ defined by
$$
\text{supp}(T):=\{(x,y)\in X\times X~|~T_{xy}\neq 0\}
$$
is a controlled set.  The usual matrix operations and adjoint make $\C_u[X]$ into a $*$-algebra.
\end{uniroe}
Note that the collection of matrices in $\C_u[X]$ supported on the diagonal constitutes a copy of $l^\infty(X)$ inside $\C_u[X]$.  

Partial translations give rise to operators in $\C_u[X]$ in the following way.  Let $t:A\to B$ be a partial translation.  Then $t$ gives rise to an operator $v\in \C_u[X]$ defined by setting
$$
v_{xy}=\left\{\begin{array}{ll} 1 & t(y)=x \\ 0 & \text{otherwise} \end{array}\right..
$$
An operator arising in this way is called a \emph{partial translation}; note that $t$ and $v$ determine each other uniquely, so there should not be any confusion caused by the repeated terminology.  It is immediate form the definitions that if $v$ is a partial translation operator corresponding to $t:A\to B$ then $v$ is a partial isometry, with $v^*$ the partial translation operator corresponding to $t^{-1}$.  Moreover, the support and range projections $v^*v$ and $vv^*$ are the characteristic functions of $A$, $B$ respectively, considered as elements of the diagonal $*$-subalgebra $l^\infty(X)$, and the support of $v$ is the graph of $t$.

\begin{repdef}\label{repdef}
A \emph{representation} of $\C_u[X]$ is by definition a unital $*$-homomorphism $\pi:\C_u[X]\to\mathcal{B}(\mathcal{H})$ from $\C_u[X]$ to the $C^*$-algebra of bounded operators on some Hilbert space $\h$.  We will usually leave $\pi$ implicit, saying just that $\h$ is a representation of $\C_u[X]$, and writing $T\xi$ for the image of an element $\xi$ of $\h$ under $\pi(T)$.
\end{repdef}
The assumption that representations are unital in the above is not very important, but does not significantly reduce generality and streamlines some arguments slightly.  In contrast, the assumption that all representations are $*$-preserving is crucial; such representations should be thought of as the analogues of unitary representations of a group.

\begin{constant}\label{constant}
Let $\h$ be a representation of $\C_u[X]$.  A vector $\xi\in\h$ is said to be \emph{invariant}, or \emph{constant}, if $v\xi=vv^*\xi$ for all partial translations $v$,.

The constant elements form a closed subspace\footnote{It is not a subrepresentation in general.} of $\h$, which we denote $\h_c$.
\end{constant}

\begin{conex}\label{conex}
If $X=\sqcup X_n$ is a disjoint union of finite connected graphs as in Example \ref{spacelem}, then the constant vectors in $l^2(X)$ are exactly those square-summable functions on $X$ that are constant on each coarse component $X_n$. 
\end{conex}

Geometric property (T) says that for any representation $\h$ of $\C_u[X]$, vectors in $\h_c^\perp$ cannot be `too close' to constant.  Here is the formal definition.

\begin{geotdef}\label{geotdef}
A space $X$ has \emph{geometric property (T)} if for any controlled generating set $E$ there exists a constant $c=c(E)>0$ such that for any representation $\h$ and $\xi\in \h_c^\perp$ there exists a partial translation $v$ in $\C_u[X]$ with support in $E$ such that 
$$
\|(vv^*-v)\xi\|\geq c\|\xi\|.
$$ 
\end{geotdef}

The reader should compare this to the following definition of property (T) for a discrete group (compare \cite[Section 1.1]{Bekka:2000kx}).  For a unitary representation of a finitely generated group $\Gamma$, let $\h_c$ denote the constant vectors: those $\xi\in \h$ for which $g\xi=\xi$ for all $g\in \Gamma$.  A finitely generated\footnote{Property (T) forces finite generation on a discrete group, so there is no harm assuming this.} group $\Gamma$ then has \emph{property (T)} if for any finite generating set $E$ of $\Gamma$ there exists a constant $c=c(E)>0$ such that for any unitary representation $\h$ of $\Gamma$ and any $\xi\in \h_c^\perp$ there exists $g\in E$ with 
$$
\|(gg^*-g)\xi\|\geq c\|\xi\|.
$$

\begin{wm}\label{wm}
A representation of $\C_u[X]$ is called a \emph{boundary representation} if it contains the ideal 
$$
\C_f[X]:=\{T\in \C_u[X]~|~T_{xy}\neq 0 \text{ for only finitely many }x,y\}
$$ 
in its kernel.  Geometric property (T) can be weakened to \emph{boundary property (T)} by requiring that the property in Definition \ref{geotdef} above holds only for all boundary representations.  This notion is more appropriate for weakly monogenic coarse spaces as discussed in Remark \ref{wmrem}.  Indeed, the results in this paper all continue to hold for weakly monogenic bounded geometry coarse spaces (with obvious minor variations) if `generating' is replaced by `weakly generating', `representation' by `boundary representation' and `geometric property (T)' by `boundary property (T)' everywhere.
\end{wm}

In the remainder of this section, we give some equivalent formulations of geometric property (T) that will be useful later.

Define a linear map $\Phi:\C_u[X]\to\l^\infty(X)$ by 
\begin{equation}\label{phidef}
\Phi(T):x\mapsto \sum_{y\in X}T_{xy}.
\end{equation}
The map $\Phi$ can be used to characterise constant vectors as follows.

\begin{conslem}\label{conslem}
Let $\xi$ be a vector in a representation $\h$ of $\C_u[X]$.  Then the following are equivalent.
\begin{enumerate}
\item For all $T\in\C_u[X]$, $T\xi=\Phi(T)\xi$.
\item For all partial translations $v$ in $\C_u[X]$, $vv^*\xi=v\xi$.
\end{enumerate}
\end{conslem}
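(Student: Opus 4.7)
The implication (1)$\Rightarrow$(2) is essentially immediate. For any partial translation $v$ corresponding to $t\colon A\to B$, the entries $v_{xy}$ are $1$ exactly when $t(y)=x$ and $0$ otherwise, so $\Phi(v)(x)=\sum_y v_{xy}$ equals the characteristic function of $B$. But $B$ is precisely the range of $v$, so $\Phi(v)=vv^*$ as elements of the diagonal copy of $l^\infty(X)$ inside $\C_u[X]$. Thus (1) applied to $T=v$ immediately yields $v\xi=vv^*\xi$.

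For (2)$\Rightarrow$(1), the strategy is to decompose an arbitrary $T\in\C_u[X]$ into a finite sum of partial-translation operators, multiplied on one side by bounded diagonal functions, and then check the identity piece by piece. Set $F:=\text{supp}(T)\cup\text{supp}(T)^{-1}$, which is a symmetric controlled set. Applying Lemma \ref{elde} with $E=\varnothing$ gives a decomposition
$$
F=\dg(F)\sqcup\bigsqcup_{i=1}^n(E_i\sqcup E_i^{-1}),
$$
where each $E_i$ is the graph of an (antisymmetric) partial translation $t_i\colon A_i\to B_i$. Denoting by $v_i$ the partial translation operator associated to $t_i$, the support of $T$ lies in $\dg(F)\cup\bigcup_i(E_i\cup E_i^{-1})$, and hence $T$ admits a unique decomposition
$$
T=D+\sum_{i=1}^n\bigl(f_iv_i+g_iv_i^*\bigr),
$$
where $D\in l^\infty(X)$ is the restriction of $T$ to the diagonal, and $f_i,g_i\in l^\infty(X)$ are bounded functions supported on $B_i$ and $A_i$ respectively (with values recording the relevant off-diagonal entries of $T$).

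Now I would compute both sides of (1) using this decomposition. Since $\Phi$ is linear, and a direct calculation shows $\Phi(D)=D$, $\Phi(f_iv_i)=f_i$, and $\Phi(g_iv_i^*)=g_i$ (the last two using that $f_i$ is supported on $B_i=\text{range}(v_i)$ and $g_i$ on $A_i=\text{range}(v_i^*)$), we obtain
$$
\Phi(T)=D+\sum_{i=1}^n(f_i+g_i).
$$
On the other hand, (2) applied to $v_i$ yields $v_i\xi=v_iv_i^*\xi=\chi_{B_i}\xi$, so $f_iv_i\xi=f_i\chi_{B_i}\xi=f_i\xi$, and similarly (2) applied to the partial translation $v_i^*$ gives $g_iv_i^*\xi=g_i\xi$. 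Summing these identities produces $T\xi=D\xi+\sum_i(f_i+g_i)\xi=\Phi(T)\xi$, establishing (1).

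The only real obstacle is getting the decomposition right and verifying that $f_i$, $g_i$ are genuinely bounded diagonal functions — both of which follow from Lemma \ref{elde} together with bounded geometry (finiteness of $n$) and the uniform bound on the matrix entries of $T$. Everything else is bookkeeping.
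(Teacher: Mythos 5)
Your proof is correct and follows essentially the same route as the paper: the forward direction is the observation $\Phi(v)=vv^*$, and the converse decomposes $T$ as a finite sum of bounded diagonal functions times partial translations supported on their ranges, then applies (2) termwise. The paper simply writes $T=\sum_i f_iv_i$ and uses $f_iv_iv_i^*=f_i$ without spelling out the decomposition via Lemma \ref{elde}; your version just makes that bookkeeping explicit.
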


\begin{proof}
For a partial translation $v$, $\Phi(v)=vv^*$, so clearly (1) implies (2).  Assume that $\xi$ satisfies (2), and let $T$ be an element of $\C_u[X]$.  We may write $T$ as a finite sum
$$
T=\sum_{i=1}^n f_i v_i,
$$
where each $v_i$ is a partial translation, and each $f_i$ is the element of $l^\infty(X)$ defined by 
$$
f_i(x)=\left\{\begin{array}{ll} T_{xy} & (v_i)_{xy}=1 \\ 0 & \text{ otherwise}\end{array}\right.
$$
Noting that $f_iv_iv_i^*=f_i$ for all $i$, we then have that
$$
T\xi=\sum_{i=1}^n f_iv_i\xi=\sum_{i=1}^n f_iv_iv_i^*\xi=\sum_{i=1}^nf_i \xi =\Phi(T)\xi. 
$$
as required.
\end{proof}

Here then is the promised equivalent formulation of geometric property (T).

\begin{nclem}\label{nclem}
The following are equivalent.
\begin{enumerate}
\item $X$ has geometric property (T).
\item There exists a controlled generating set $E$ and a constant $c>0$ such that for any representation $\h$ and $\xi\in \h_c^\perp$ there exists a partial translation $v$ in $\C_u[X]$ with support in $E$ such that 
$$
\|(vv^*-v)\xi\|\geq c\|\xi\|.
$$
\item For any controlled generating set $E$ there exists a constant $c=c(E)>0$ such that for any representation $\h$ and $\xi\in \h_c^\perp$ there exists an operator $T\in \C_u[X]$ with support in $E$ such that
$$
\|(T-\Phi(T))\xi\|>c\sup_{x,y}|T_{xy}|\|\xi\|.
$$
\item  There exists a controlled generating set $E$ and a constant $c>0$ such that for any representation $\h$ and $\xi\in \h_c^\perp$ there exists an operator $T\in \C_u[X]$ with support in $E$ such that
$$
\|(T-\Phi(T))\xi\|>c\sup_{x,y}|T_{xy}|\|\xi\|.
$$
\end{enumerate}
\end{nclem}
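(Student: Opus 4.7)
The plan is to close a cycle of implications. The trivial directions are (1)~$\Rightarrow$~(2), (3)~$\Rightarrow$~(4) (weakening a universal quantifier) and (1)~$\Rightarrow$~(3), (2)~$\Rightarrow$~(4) (take $T:=v$, noting $\Phi(v)=vv^*$ and $\sup_{x,y}|v_{xy}|=1$). It remains to establish (3)~$\Rightarrow$~(1) and (2)~$\Rightarrow$~(1); the argument for (3)~$\Rightarrow$~(1) will also give (4)~$\Rightarrow$~(2).

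For (3)~$\Rightarrow$~(1), fix a generating set $F$; since the involution $w\mapsto w^*$ preserves $\|(ww^*-w)\xi\|$, we may assume $F$ symmetric. Lemma~\ref{elde} applied with $E:=\dg(F)$ decomposes $F=\dg(F)\sqcup\bigsqcup_{k=1}^N(F_k\sqcup F_k^{-1})$ into antisymmetric elementary pieces, with $N=N(F)$. Any $T\in\C_u[X]$ supported in $F$ thus has the form $T=T_0+\sum_k(f_k v_k+g_k v_k^*)$, with $T_0$ diagonal, $v_k$ the partial translation whose graph is $F_k$, and $f_k,g_k\in l^\infty(X)$ of norm at most $\sup_{x,y}|T_{xy}|$. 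A direct calculation gives $\Phi(T_0)=T_0$, $\Phi(f_k v_k)=f_k v_k v_k^*$, $\Phi(g_k v_k^*)=g_k v_k^* v_k$, so
$$
(T-\Phi(T))\xi=\sum_k f_k(v_k-v_k v_k^*)\xi+\sum_k g_k(v_k^*-v_k^* v_k)\xi,
$$
and the triangle inequality yields $\|(T-\Phi(T))\xi\|\le 2N\sup_{x,y}|T_{xy}|\cdot\max_w\|(ww^*-w)\xi\|$, with $w$ ranging over partial translations with graph in $F$. Feeding in (3) produces such a $w$ with $\|(ww^*-w)\xi\|\ge (c/2N)\|\xi\|$, proving (1); the same argument starting from (4) proves (2).

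For (2)~$\Rightarrow$~(1), fix a (symmetric) generating set $F$ and $n$ with $E\subseteq F^{\circ n}$. Hypothesis (2) produces a partial translation $v=v_t$ with graph in $E$ satisfying $\|(v-vv^*)\xi\|\ge c\|\xi\|$. Since $\mathrm{graph}(t)\subseteq F^{\circ n}$, for each $x\in\mathrm{dom}(t)$ we may (as in the proof of Lemma~\ref{ptde}) choose a path $x=r_0(x),\dots,r_n(x)=t(x)$ with $(r_j(x),r_{j-1}(x))\in F$, giving functions $r_j:\mathrm{dom}(t)\to X$ with graph in $F^{\circ j}$. The telescoping identity $\xi(t(x))-\xi(x)=\sum_j(\xi(r_j(x))-\xi(r_{j-1}(x)))$, Cauchy--Schwarz, and $\|(v-vv^*)\xi\|^2=\sum_x|\xi(x)-\xi(t(x))|^2$ together yield
$$
\|(v-vv^*)\xi\|^2\le n\sum_{j=1}^n\sum_{x\in\mathrm{dom}(t)}|\xi(r_j(x))-\xi(r_{j-1}(x))|^2.
$$
By bounded geometry of $F^{\circ j}$ each $r_j$ is at most $M_j$-to-one, so the inner sum is at most $M_j\sum_{(y,z)\in F}|\xi(y)-\xi(z)|^2$; the $F$-decomposition from the previous paragraph bounds the latter by $2N\max_w\|(w-ww^*)\xi\|^2$. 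Assembling, $\|(v-vv^*)\xi\|^2\le C\max_w\|(w-ww^*)\xi\|^2$ with $C:=2nN\sum_j M_j$ depending only on $E$ and $F$, so (1) holds with constant $c/\sqrt{C}$.

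The main obstacle is the last step: controlling a sum over $x\in\mathrm{dom}(t)$ of squared differences of $\xi$ along the $r_j$-paths by a \emph{maximum} of quantities $\|(w-ww^*)\xi\|^2$ over single-edge partial translations, with constant depending only on the generating sets. It is the double appearance of Lemma~\ref{elde} together with the multiplicity bound on $r_j$ that keep $C$ independent of the representation and of the partial translation $v$ produced by the hypothesis.
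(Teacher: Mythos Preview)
Your treatment of the implications (1)\,$\Rightarrow$\,(2), (3)\,$\Rightarrow$\,(4), (1)\,$\Rightarrow$\,(3), (2)\,$\Rightarrow$\,(4), and your argument for (3)\,$\Rightarrow$\,(1) (equivalently (4)\,$\Rightarrow$\,(2)) are fine and match the paper's approach essentially exactly: decompose $T$ along an elementary decomposition of $F$, compute $\Phi$ on each piece, and apply the triangle inequality together with contractivity of the $l^\infty(X)$-representation.

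The gap is in your (2)\,$\Rightarrow$\,(1). You write the ``telescoping identity'' $\xi(t(x))-\xi(x)=\sum_j(\xi(r_j(x))-\xi(r_{j-1}(x)))$ and the formula $\|(v-vv^*)\xi\|^2=\sum_x|\xi(x)-\xi(t(x))|^2$, but $\xi$ is a vector in an \emph{abstract} representation $\h$, not a function on $X$. These pointwise expressions make sense only in the $l^2(X)$ representation (or more generally when every Dirac projection $\chi_{\{x\}}$ acts nontrivially and $\sum_x\chi_{\{x\}}$ converges strongly to $1$). For a general representation---for instance a boundary representation in which all $\chi_{\{x\}}$ act as zero---there is no meaning to ``$\xi(x)$'', and the entire estimate collapses. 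Consequently your argument, as written, only establishes the $l^2(X)$ special case of (1) from (2).

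The paper's proof of (2)\,$\Rightarrow$\,(1) avoids this by working purely operator-theoretically. It uses Lemma~\ref{ptde} to factor $v=v_1\cdots v_n$ with each $\text{supp}(v_i)\subseteq F$ and $v_iv_i^*=v_{i-1}^*v_{i-1}$, and then telescopes the operator expression
\[
\|(vv^*-v)\xi\|=\|(v_1\cdots v_nv_n^*\cdots v_1^*-v_1\cdots v_n)\xi\|\le \sum_{i=1}^n\|(v_iv_i^*-v_i)\xi\|,
\]
using only the partial-isometry relations and the triangle inequality. This yields $\|(v_iv_i^*-v_i)\xi\|\ge c/n$ for some $i$, with $n$ depending only on $E$ and $F$, and is valid in every $*$-representation. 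You should replace your pointwise $l^2$ computation with this operator telescoping (after first applying Lemma~\ref{ptde} to split $\text{dom}(t)$ so that the $s_i^j$ are honest partial translations, not just the rough $r_j$'s).
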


\begin{proof}
The implication (1) implies (2) is clear.  For the converse, assume that $E$ and $c>0$ are as in (2), and let $F$ be any controlled generating set for the coarse structure.  As $F$ is generating, there exists $n$ such that $F^{\circ n}$ contains $E$, and thus property (2) holds with $F^{\circ n}$ replacing $E$.  Now, let $\h$ be a representation of $\C_u[X]$, and $\xi$ a unit vector in $\h_c^\perp$.  Using property (2) for $F^{\circ n}$, there exists a partial translation $t:A\to B$ with graph contained in $F^{\circ n}$ such that if $v$ is the corresponding operator, then $\|vv^*\xi-v\xi\|>c$.  

Now, using Lemma \ref{ptde} there exist partial translations $v_1,...,v_n$ such that $v=v_1\cdots v_n$, so that $\text{supp}(v_i)\subseteq F$ for each $i$, and so that $v_iv_i^*=v_{i-1}^*v_{i-1}$ for all $i=2,...,n$.  We then have that
\begin{align*}
c&\leq\|(vv^*-v)\xi\| \\ &= \|(v_1\cdots v_nv_n^*\cdots v_1^*-v_1\cdots v_n)\xi\| \\ &= \|v_1\cdots v_{n-1}v_{n-1}^*\cdots v_1^*-v_1\cdots v_n)\xi\| \\
&\leq \|(v_1\cdots v_{n-1}v_{n-1}^*\cdots v_1^*-v_1\cdots v_{n-1})\xi\|+\|(v_1\cdots v_{n-1}-v_1\cdots v_n)\xi\| \\
&\leq \|(v_1\cdots v_{n-1}v_{n-1}^*\cdots v_1^*-v_1\cdots v_{n-1})\xi\|+\|(v_{n-1}-v_{n-1}v_n)\xi\| \\
&= \|(v_1\cdots v_{n-1}v_{n-1}^*\cdots v_1^*-v_1\cdots v_{n-1})\xi\|+\|(v_{n-1}v_nv_n^*-v_{n-1}v_n)\xi\| \\
&\leq \|(v_1\cdots v_{n-1}v_{n-1}^*\cdots v_1^*-v_1\cdots v_{n-1})\xi\|+\|(v_nv_n^*-v_n)\xi\|. 
\end{align*}
Continuing in this way, we may conclude that
$$
c\leq\sum_{i=1}^n \|(v_iv_i^*-v_i)\xi\|,
$$
whence for some $i=1,...,n$, $\|(v_iv_i^*-v_i)\xi\|\geq c/n$.  We may thus take $c(F)=c/n$.

We will now show that (2) and (4) are equivalent; the proof that (1) and (3) are equivalent is analogous.  Noting as in the proof of Lemma \ref{conslem} that for a partial translation $v$, $\Phi(v)=vv^*$ it is clear that (2) implies (4), so it suffices to show that (4) implies (2).  

Let then $E$ and $c>0$ be as in the statement of (4).  Let $\h$ be a representation of $\C_u[X]$, let $\xi$ be an element of $\h_c^\perp$, and let $T\in \C_u[X]$ be as in the statement of (4) for this $\xi$.  We may write $T=\sum_{i=1}^n f_i v_i$ as in the proof of Lemma \ref{conslem}, where $n$ depends only on $E$ (not on $T$, $\h$ or $\xi$) and each $f_i$ has norm at most $\sup_{x,y\in X}|T_{xy}|$ as an element of $l^\infty(X)$.  As $*$-representations of the $C^*$-algebra $l^\infty(X)$ are contractive, each $f_i$ also has norm at most $\sup_{x,y\in X}|T_{xy}|$ when considered as an operator on $\mathcal{H}$.  We have then that
\begin{align*}
c\sup_{x,y\in X}|T_{xy}|\|\xi\|& < \|(T-\Phi(T))\xi\|\leq \sum_{i=1}^n \|(f_iv_i-f_i)\xi\|= \sum_{i=1}^n \|(f_iv_i-f_iv_iv_i^*)\xi\| \\&\leq \sum_{i=1}^n \|f_i\|\|(v_i-v_iv_i^*)\xi\|\leq \sup_{x,y\in X}|T_{xy}|\sum_{i=1}^n \|(v_i-v_iv_i^*)\xi\|.
\end{align*}
Hence for some $i$, $\|(v_i-v_iv_i^*)\xi\|\geq (c/n)\|\xi\|$; as $n$ depends only on $E$, this implies (2).
\end{proof}

\section{Coarse invariance}\label{cinvsec}

In this section we show that geometric property (T) is a coarse invariant, i.e.\ invariant under coarse equivalences as in the following definition.

\begin{cequiv}\label{cequiv}
A function $f:X\to Y$ between two spaces is \emph{uniformly expansive} if for any controlled set $E$ for $X$, the set
$$
\{(f(x_1),f(x_2))\in Y\times Y~|~(x_1,x_2)\in E\}
$$
is controlled for $Y$.  Two functions $f,g:X\to Y$ between two spaces are \emph{close} if the set
$$
\{(f(x),g(x))~|~x\in X\}
$$
is controlled for $Y$.

Two spaces $X$ and $Y$ are \emph{coarsely equivalent} if there exist uniformly expansive functions 
$$
f:X\to Y,~~~g:Y\to X
$$
such that the compositions $f\circ g$ and $g\circ f$ are close to the identities on $Y$ and $X$ respectively.
\end{cequiv}

The following example will be important in the proofs that follow.

\begin{incex}\label{incex}
A subset $Y$ of a space $X$ (with the inherited coarse structure) is \emph{coarsely dense} if there is a controlled set $E$ for $X$ such that the set
\begin{equation}\label{nearby}
\{y\in Y~|~(x,y)\in E\}
\end{equation}
is non-empty for all $x\in X$.  It is not difficult to see that $Y$ is coarsely dense if and only if the inclusion $i:Y\to X$ is a coarse equivalence, with `the inverse-up-to-closeness' given by any function $p:X\to Y$ that takes each $x\in X$ to any $y$ in the set in line \eqref{nearby} above.
\end{incex}

Our main goal in this section then is to prove the following result: we stated we expected this to be true in \cite[Section 7]{Willett:2010zh}, but did not have a complete proof at that time.

\begin{cinv}\label{cinv}
Let $X$ and $Y$ be coarsely equivalent spaces.  Then $X$ has geometric property (T) if and only if $Y$ does.
\end{cinv}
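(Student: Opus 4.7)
The plan is to proceed in two stages.  First, a reduction: given a coarse equivalence $f:X\to Y$ with coarse inverse $g:Y\to X$, I would form the space $Z=X\sqcup Y$ equipped with the coarse structure generated by those of $X$ and $Y$ together with the graph of $f$.  Bounded geometry of $Z$ follows from the fact that $f$ must be uniformly bounded-to-one (as $g\circ f$ is close to the identity on the bounded-geometry space $X$), and both $X$ and $Y$ sit in $Z$ as coarsely dense subsets.  By Example \ref{incex} each inclusion is itself a coarse equivalence, and the theorem reduces to showing that for any coarsely dense inclusion $Y\hookrightarrow X$, the space $Y$ has geometric property (T) if and only if $X$ does.

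In the reduced setting the plan is to set up a correspondence between representations of $\C_u[X]$ and $\C_u[Y]$.  Since $Y$ carries the induced coarse structure, there is a natural extension-by-zero inclusion $\C_u[Y]\hookrightarrow \C_u[X]$ under which the unit $\chi_Y$ maps to a projection $P\in l^\infty(X)\subset \C_u[X]$.  Fix a close-to-identity map $p:X\to Y$ with $p|_Y=\text{id}_Y$; bounded geometry forces the fibres $X_y:=p^{-1}(y)$ to partition $X$ into sets of size uniformly bounded by some $M$.  Enumerating $X_y=\{y=x_{y,1},\ldots,x_{y,n_y}\}$ yields partial translations $t_i:Y_i\to X$, $y\mapsto x_{y,i}$, on $Y_i:=\{y\in Y:n_y\geq i\}$, and the corresponding partial isometries $V_1,\ldots,V_M\in \C_u[X]$ satisfy $V_i^*V_i=\chi_{Y_i}$ and $\sum_{i=1}^M V_iV_i^*=1$.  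Restriction along $P$ sends a representation $\h$ of $\C_u[X]$ to the $\C_u[Y]$-representation $P\h$; inflation sends a representation $\h_Y$ of $\C_u[Y]$ to a representation of $\C_u[X]$ on $\bigoplus_{i=1}^M \pi_Y(\chi_{Y_i})\h_Y$ with the $V_i$ acting as matrix units.  These operations should be mutually inverse up to unitary equivalence.

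Using characterisation (2) of Proposition \ref{nclem}, it suffices to verify the almost-invariant-implies-invariant property for a single controlled generating set per space.  I would pick a generating set $E_Y$ for $Y$ and an enlargement $E_X$ for $X$ containing the graphs of the $t_i$.  Lemmas \ref{elde} and \ref{ptde} let one decompose any partial translation on $X$ with support in $E_X$ into a bounded-length product of pieces of the form $V_iwV_j^*$ with $w$ a partial translation on $Y$ supported in a fixed enlargement of $E_Y$, so that a triangle-inequality argument modelled on the proof of Proposition \ref{nclem} converts almost invariance under the $E_Y$-partial-translations into almost invariance under the $E_X$-partial-translations and vice versa, with only a controlled loss in the constant.

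The hardest step I expect is the matching bookkeeping for constant vectors.  To deduce geometric property (T) on one side from the other, one also needs that if $\xi\in\h_c^\perp$ in a $\C_u[X]$-representation then its component in the constants of the $\C_u[Y]$-representation $P\h$ is controllably small compared with $\|P\xi\|$, and dually that inflating a vector orthogonal to the constants in a $\C_u[Y]$-representation produces a vector orthogonal to the constants in the inflated $\C_u[X]$-representation, with quantitative bounds independent of the representation.  Without such uniform control the orthogonality constraint is not preserved by the correspondence, so almost invariance on one side will not produce a non-constant almost-invariant vector on the other.  Establishing these bounds requires a careful fibrewise analysis of how constants extend from $Y$ to $X$---best carried out coarse-component by coarse-component---and of its compatibility with the matrix-unit structure of the $V_i$; this is presumably the reason for the substantial auxiliary apparatus of lemmas the authors set up in this section before proving Theorem \ref{cinv}.
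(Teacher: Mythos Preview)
Your reduction via $Z=X\sqcup Y$ is different from the paper's but perfectly valid: the paper instead factors an arbitrary coarse equivalence as a bijective coarse equivalence sandwiched between coarsely dense inclusions (Lemma~\ref{cequist}), handling the bijective case trivially (Lemma~\ref{bij}) and then proving Proposition~\ref{inc} for coarsely dense inclusions.  Your route avoids the bijective step entirely, which is a minor simplification.

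The substantive divergence is your choice of projection.  You work with $P=\chi_Y$ and the matrix units $V_i$, which does implement a Morita equivalence between $\C_u[X]$ and $\C_u[Y]$.  The paper explicitly considers and rejects this choice: it remarks (just before Lemma~\ref{aprops}) that ``the projection $A=\chi_Y$ \ldots\ has the above properties, but it does not behave well with respect to constant vectors'', and instead uses the averaging projection $A$ onto functions constant on each fibre $U_y$.  The payoff is that $\Phi(A)=1$, so $\h_c^X\subseteq A\cdot\h$ automatically, and the relationship between $\h_c^X$ and $\h_c^Y$ is then mediated by the explicit diagonal operator $N(x)=|U_{y(x)}|^{1/2}$: Lemma~\ref{conn} shows $N$ restricts to an isomorphism $\h_c^Y\to\h_c^X$ and gives the quantitative bounds on orthogonal complements that you correctly flag as the crux.

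Your approach is not wrong, but the hard step you leave open is genuinely hard with $P=\chi_Y$: in your inflated representation, $X$-constants correspond to $Y$-constants $\eta$ via $\eta\mapsto(\chi_{Y_i}\eta)_i$, and the norm distortion of this map is exactly multiplication by $n^{1/2}$ where $n(y)=|X_y|$---so you would end up reconstructing the paper's $N$ anyway.  The paper's choice of $A$ packages this more cleanly because the weighting is built into the isomorphism $\alpha:A\C_u[X]A\to\C_u[Y]$ from the start.  In short: your outline is viable and close in spirit, but the paper's averaging projection is the device that turns your ``careful fibrewise analysis'' into the concrete Lemmas~\ref{aprops}--\ref{conn}.
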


We start with a well-known `structural result' about coarse equivalences.

\begin{cequist}\label{cequist}
Let $f:X\to Y$ be a coarse equivalence.  Then there exist coarsely dense subspaces $X'$ of $X$ and $Y'$ of $Y$ such that $f$ restricts to a bijection $f':X'\to Y'$.  In other words, for any coarse equivalence $f:X\to Y$, there is a factorization
$$
\xymatrix{ X \ar[d]^p \ar[r]^f & Y \\ X' \ar[r]^{g} & Y' \ar[u]^i},
$$
where $p:X\to X'$ is an inverse-up-to-closeness of the inclusion of $X'$ in $X$, $g$ is a bijective coarse equivalence, and $i:Y'\to Y$ is the inclusion of a coarsely dense subset.
\end{cequist}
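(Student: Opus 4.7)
The plan is to take $X'\subseteq X$ to be a complete set of representatives for the nonempty fibers of $f$, and $Y':=f(X)$. The key observation driving the construction is that, under the coarse equivalence hypothesis, the fibers of $f$ are uniformly bounded in size. Let $h:Y\to X$ be a coarse inverse of $f$, so that $(h(f(x)),x)\in E$ for some controlled set $E\subseteq X\times X$ and every $x\in X$. If $f(x_1)=f(x_2)$ then $h(f(x_1))=h(f(x_2))$, whence $(x_1,x_2)\in E^{-1}\circ E$. Bounded geometry of $X$ applied to the controlled set $E^{-1}\circ E$ then forces every nonempty fiber of $f$ to have cardinality bounded independently of the fiber.

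Given this uniform fiber bound, coarse density of the chosen set of representatives is automatic. Since $f\circ h$ is close to the identity on $Y$, each $y\in Y$ sits in a fixed controlled neighbourhood of $f(h(y))\in f(X)$, so $Y':=f(X)$ is coarsely dense in $Y$ in the sense of Example \ref{incex}. For each $y\in Y'$, choose once and for all a point $x_y\in f^{-1}(y)$, and set $X':=\{x_y\mid y\in Y'\}$. Define $p:X\to X'$ by $p(x):=x_{f(x)}$; since $x$ and $p(x)$ always lie in the same fiber of $f$, the graph $\{(x,p(x))\mid x\in X\}$ is contained in $E^{-1}\circ E$, so $p$ is close to the identity on $X$, exhibiting $X'$ as coarsely dense and $p$ as an inverse-up-to-closeness for the inclusion $X'\hookrightarrow X$.

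Finally, $g:=f|_{X'}:X'\to Y'$ is a bijection by construction. It is uniformly expansive as a restriction of $f$, and its set-theoretic inverse $y\mapsto x_y$ is close to $h|_{Y'}$ (again by the fiber argument), so is uniformly expansive as well; hence $g$ is a bijective coarse equivalence. Commutativity of the diagram is the computation $(i\circ g\circ p)(x)=f(x_{f(x)})=f(x)$. The only step with any genuine content is the uniform fiber bound, which is where both the existence of a coarse inverse and the bounded geometry hypothesis are used essentially; the remainder of the argument is formal bookkeeping.
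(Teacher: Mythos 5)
Your construction is exactly the paper's: take $Y'=f(X)$, pick one representative $x_y$ in each fibre, and let $X'$ be the set of representatives (the paper states this and leaves the verification to the reader, which you carry out correctly). One small remark: the uniform bound on fibre cardinalities is never actually used --- what drives the argument is only that the fibre relation is contained in the controlled set $E^{-1}\circ E$, which you establish en route and which holds without invoking bounded geometry.
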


\begin{proof}
Let $Y'=f(X)$.  For each $y\in Y'$, choose $x(y)\in f^{-1}(y)$, and define $X'=\{x(y)\in X~|~y\in Y\}$.  It is not difficult to check that this $X'$ and $Y'$ have the required properties.
\end{proof}

To prove Theorem \ref{cinv}, it will thus suffice to prove the following two results.

\begin{bij}\label{bij}
Let $f:X\to Y$ be a bijective coarse equivalence.  Then $X$ has geometric property (T) if and only if $Y$ does.
\end{bij}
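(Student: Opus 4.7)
The plan is to show that a bijective coarse equivalence $f: X \to Y$ induces a $*$-isomorphism $\alpha: \C_u[X] \to \C_u[Y]$ of translation algebras which identifies partial translations with partial translations, and then to use this to transfer geometric property (T) directly between $X$ and $Y$.

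First I would define $\alpha: \C_u[X] \to \C_u[Y]$ by $(\alpha T)_{y,y'} := T_{f^{-1}(y), f^{-1}(y')}$. If $\text{supp}(T) \subseteq E$ then $\text{supp}(\alpha T) \subseteq (f \times f)(E)$, which is controlled in $Y$ because $f$ is uniformly expansive; the analogous statement for $\alpha^{-1}$, which applies $f^{-1}$, uses that $f^{-1}$ is uniformly expansive. A direct computation on matrix entries (using bijectivity of $f$ to rewrite the sum defining matrix multiplication) shows that $\alpha$ is a unital $*$-homomorphism, and is therefore a $*$-isomorphism.

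Next I would check that $\alpha$ restricts to a bijection between partial translations. If $v$ is the partial translation operator corresponding to $t: A \to B$ on $X$, then $\alpha(v)$ is the partial translation operator corresponding to $f \circ t \circ f^{-1}: f(A) \to f(B)$ on $Y$, with graph exactly $(f \times f)(\text{graph}(t))$. Consequently, for any representation $\pi: \C_u[Y] \to \mathcal{B}(\h)$, the pullback representation $\pi \circ \alpha: \C_u[X] \to \mathcal{B}(\h)$ has exactly the same constant subspace $\h_c$, since $\alpha$ takes every partial translation on $X$ bijectively to a partial translation on $Y$ and $\alpha(vv^*) = \alpha(v)\alpha(v)^*$.

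Finally, if $E$ is a controlled generating set for $X$, then $E' := (f \times f)(E)$ is a controlled generating set for $Y$: it is controlled since $f$ is uniformly expansive, and is generating because bijectivity of $f$ gives $(f \times f)(E^{\circ n}) = (E')^{\circ n}$, so any controlled $F' \subseteq Y \times Y$ pulls back to a controlled $F \subseteq X \times X$ with $F \subseteq E^{\circ n}$ for some $n$, whence $F' \subseteq (E')^{\circ n}$. Partial translations on $X$ with support in $E$ correspond via $\alpha$ to partial translations on $Y$ with support in $E'$, and the relevant norms $\|(vv^* - v)\xi\|$ are preserved because $\alpha$ is implemented at the level of operators on $\h$. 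So Definition \ref{geotdef} for $X$ with constant $c(E)$ translates into Definition \ref{geotdef} for $Y$ with constant $c(E') = c(E)$, and running the same argument with $f^{-1}$ in place of $f$ gives the reverse implication. There is no substantial obstacle here; the entire proof is a bookkeeping exercise organized around the isomorphism $\alpha$.
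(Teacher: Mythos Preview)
Your proposal is correct and takes essentially the same approach as the paper: both define the pullback $*$-isomorphism between $\C_u[X]$ and $\C_u[Y]$ induced by the bijection $f$ and observe that it restricts to a bijection between partial translations, from which the result follows. Your write-up is in fact more detailed than the paper's own two-line proof, spelling out why generating sets and constant subspaces correspond under $\alpha$.
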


\begin{inc}\label{inc}
Let $Y$ be a coarsely dense subspace of a space $X$.  Then $Y$ has geometric property (T) if and only if $X$ does.
\end{inc}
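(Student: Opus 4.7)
The plan is to establish both directions via an algebraic ``Morita-type'' correspondence between $\C_u[X]$ and $\C_u[Y]$ that preserves constants, and then to transfer almost-invariance. Coarse density of $Y$ in $X$ (Example \ref{incex}) provides a map $p:X\to Y$ with controlled graph and $p|_Y=\mathrm{id}_Y$. By bounded geometry there is $N$ with $|p^{-1}(y)|\leq N$ for all $y$, yielding a decomposition $X=\bigsqcup_{i=1}^N s_i(Y_i)$ with $Y_i:=\{y\in Y:|p^{-1}(y)|\geq i\}$, controlled-graph injections $s_i:Y_i\hookrightarrow X$, and $s_1=\mathrm{id}_Y$. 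The associated partial translation operators $u_i\in\C_u[X]$ corresponding to $t_i:=s_i^{-1}$ satisfy $u_1=\chi_Y$, $\sum_i u_i^*u_i=1$, and $u_iu_j^*=\delta_{ij}\chi_{Y_i}$. Combined with the natural identification $\chi_Y\C_u[X]\chi_Y\cong\C_u[Y]$, the map $T\mapsto(u_iTu_j^*)_{ij}$ is a $*$-embedding $\C_u[X]\hookrightarrow M_N(\C_u[Y])$ whose image is the corner $eM_N(\C_u[Y])e$ for $e:=\dg(\chi_{Y_1},\dots,\chi_{Y_N})$.

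I then translate this into a correspondence of representations. A representation $\h$ of $\C_u[Y]$ induces the representation $\h':=\bigoplus_{i=1}^N\chi_{Y_i}\h$ of $\C_u[X]$ with action $\pi(T)_{ij}:=u_iTu_j^*$. Conversely, a representation $\h'$ of $\C_u[X]$ restricts on $\h:=\pi(\chi_Y)\h'$ to a representation of $\C_u[Y]$, with $\h'$ recovered via $\xi'\leftrightarrow(\pi(u_i)\xi')_i$. Using Lemma \ref{conslem} and the relations on the $u_i$, a direct computation identifies the $\C_u[X]$-constants in $\h'$ as precisely the vectors corresponding to $(\chi_{Y_i}\eta)_i$ with $\eta\in\h_c$, so $(\h')_c^\perp$ consists of those $(\xi_i)_i$ with $\sum_i\xi_i\in\h_c^\perp$.

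For $X$ geo (T) $\Rightarrow$ $Y$ geo (T): given $\xi\in\h_c^\perp$, embed isometrically as $V\xi:=(\xi,0,\dots,0)\in(\h')_c^\perp$. Enlarge any given controlled generating set $F$ for $Y$ (together with the graphs of the $t_i$) to a generating set $E$ for $X$, and apply $X$'s property (T) via Proposition \ref{nclem}(4) to find $T\in\C_u[X]$ with $\mathrm{supp}(T)\subseteq E^{\circ n}$ and $\|(T-\Phi(T))V\xi\|>c\sup|T_{xy}|\|\xi\|$. In the matrix picture the entries $u_iTu_1^*$ give operators in $\C_u[Y]$ with controlled supports; the discrepancy between $\Phi$ and $\Phi_Y$ is a diagonal contribution from $X\setminus Y$, bounded uniformly by a constant multiple of $\sup|T_{xy}|$ thanks to bounded geometry. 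Combining these, one extracts an operator $S\in\C_u[Y]$ with controlled support satisfying $\|(S-\Phi_Y(S))\xi\|\gtrsim\sup|S_{yz}|\|\xi\|$, and Proposition \ref{nclem}(4) for $Y$ completes this direction. For $Y$ geo (T) $\Rightarrow$ $X$ geo (T): given $\xi'\in(\h')_c^\perp$, use a dichotomy on the quantities $\|\pi(u_i-u_iu_i^*)\xi'\|$. If one is at least $c_0\|\xi'\|$ for a fixed $c_0$, then $u_i$ itself witnesses non-invariance. Otherwise, $\pi(u_i)\xi'\approx\chi_{Y_i}\pi(u_1)\xi'$ uniformly in $i$, and a short calculation forces $\|\sum_i\pi(u_i)\xi'\|$ to be bounded below proportionally to $\|\xi'\|$; applying $Y$'s property (T) to $\sum_i\pi(u_i)\xi'\in\h_c^\perp$ yields a partial translation $v\in\C_u[Y]\hookrightarrow\C_u[X]$ whose inequality transports back to $\xi'$, and Proposition \ref{nclem} concludes.

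The main obstacle is the careful bookkeeping: the correspondence $\h_c\leftrightarrow(\h')_c$ is bijective but not isometric (the scaling factor depends on the multiplicity function $y\mapsto|p^{-1}(y)|$), the projections $\chi_{Y_i}$ need not preserve $\h_c$ in general (which is why the dichotomy argument is needed in the $Y\Rightarrow X$ direction), and the row-sum maps $\Phi$ and $\Phi_Y$ differ by contributions from $X\setminus Y$ that must be absorbed into the supremum bounds afforded by the operator formulation in Proposition \ref{nclem}(4). Relating generating sets of $X$ and $Y$ to each other and controlling the passage to $n$-th compositions via Lemma \ref{ptde} adds to the technical overhead.
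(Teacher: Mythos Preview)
Your Morita-type setup via the corner projection $\chi_Y$ and the partial isometries $u_i$ is a genuine alternative to the paper's route, and your identification of the constant vectors in $\h'$ as $\{(\chi_{Y_i}\eta)_i:\eta\in\h_c\}$ (equivalently $(\h')_c^\perp=\{(\xi_i):\sum_i\xi_i\in\h_c^\perp\}$) is correct. The paper instead uses an \emph{averaging} projection $A$ onto functions constant on the fibres $p^{-1}(y)$, and explicitly remarks that $\chi_Y$ ``does not behave well with respect to constant vectors.'' The place this bites is exactly your transport steps, and it is not just bookkeeping.

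Take the $Y\Rightarrow X$ direction. In the ``otherwise'' branch of your dichotomy you have $\xi_i\approx\chi_{Y_i}\xi_1$, hence $\eta:=\sum_i\xi_i\approx m\xi_1$ where $m(y)=|p^{-1}(y)|$. You correctly get $\eta\in\h_c^\perp$ with $\|\eta\|\gtrsim\|\xi'\|$, and $Y$'s property~(T) supplies $v\in\C_u[Y]$ with $\|(vv^*-v)\eta\|\geq c'\|\eta\|$. But $v$, viewed in $\C_u[X]$ via $\chi_Y\C_u[X]\chi_Y$, satisfies $\pi(v-vv^*)\xi'=((v-vv^*)\xi_1,0,\dots,0)$, so you need $\|(vv^*-v)\xi_1\|$ large. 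From $\|(vv^*-v)m\xi_1\|$ large this does not follow: the commutator $[v,m]$ has entries $v_{yy'}(m(y')-m(y))$ and hence operator norm up to $N-1$, which swamps the Kazhdan constant $c'$. A mirror obstruction appears in your $X\Rightarrow Y$ direction: decomposing $\|(vv^*-v)V\xi\|^2$ with $V\xi=(\xi,0,\dots,0)$ produces, besides $\|(w_{11}w_{11}^*-w_{11})\xi\|^2$, cross terms $\|w_{i1}\xi\|^2$ and $\|w_{1j}^*\xi\|^2$ for $w_{ij}=u_ivu_j^*$, and there is no mechanism to convert ``$\|w\xi\|$ large'' into ``$\|(ww^*-w)\xi\|$ large.'' The paper's fix is to build the weight $N(x)=|p^{-1}(p(x))|^{1/2}$ into the correspondence from the outset: one shows (Lemma~\ref{conn}) that $N:\h^Y_c\to\h^X_c$ is an isomorphism and that conjugation by $N$ intertwines $\Phi_X$ with the transported $\Phi_Y$, after which the estimates close. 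Your final paragraph correctly names the non-isometry of $\h_c\leftrightarrow(\h')_c$ as the crux; the resolution is a structural ingredient, not absorbable overhead.
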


\begin{proof}[Proof of Lemma \ref{bij}]
Define a function $f^*:\C_u[Y]\to \C_u[X]$ by $f^*(T)_{x_1x_2}:=T_{f(x_1)f(x_2)}$.  It is not difficult that $f^*$ is a $*$-isomorphism that restricts to a bijection between the collections of partial translations in $\C_u[Y]$ and $\C_u[X]$.  The result follows immediately from this. 
\end{proof}

The proof of Proposition \ref{inc} is more involved.  For the benefit of those readers who know about Morita equivalence, we explain the basic idea as follows.  We will define a projection $A\in \C_u[X]$ such that 
$$
A\C_u[X]A\cong \C_u[Y],~~~\C_u[X]A\C_u[X]=\C_u[X],
$$
so $A$ is a full projection, implementing a `$*$-algebra Morita equivalence' between $\C_u[X]$ and $\C_u[Y]$.  This Morita equivalence implements a bijective correspondence between the sets of representations of $\C_u[X]$ and $\C_u[Y]$ roughly defined by
\begin{equation}\label{corr}
\begin{array}{rllrll}
\text{Rep}(\C_u[X])& \to& \text{Rep}(\C_u[Y]), &\text{Rep}(\C_u[Y]) & \to& \text{Rep}(\C_u[X]) \\
\h & \mapsto& A\cdot \h &\h &\mapsto & \C_u[X]A\otimes_{\C_u[Y]}\h.
\end{array}
\end{equation}
The projection $A=\chi_Y$, the characteristic function of $Y$ in $l^\infty(X)$, has the above properties, but it does not behave well with respect to constant vectors.  We will thus take $A$ to be a sort of `averaging operator': this has the crucial property that the correspondences in line \eqref{corr} above `almost' takes constant vectors to constant vectors.

Now for the details.  We require some notational preliminaries.  Fix a decomposition $X=\sqcup_{y\in Y} U_y$ of $X$ into subsets $U_y$ parametrized by $Y$ such that for each $y\in Y$, $U_y$ contains $y$, and so there is a controlled set $E$ such that $U_y\times U_y\subseteq E$ for all $y$ (and in particular, $\max_{y\in Y}|U_y|$ is finite); it is not difficult to see that coarse denseness of $Y$ in $X$ implies that such a decomposition exists.  For $x\in X$, write $y(x)$ for the (unique) $y\in Y$ such that $x$ is in $U_y$.  For $y\in Y$, define $n(y)=|U_y|$.  For $x$ in $X$ we define also
\begin{equation}\label{bign}
n(x):=|U_{y(x)}|=n(y(x)),~~~N(x):=n(x)^{\frac{1}{2}}.
\end{equation}
We will think of $N$ as an (invertible) element of $l^\infty(X)\subseteq \C_u[X]$.

Define now an operator $A$ in $\C_u[X]$ by the formula
$$
A_{xz}=\left\{\begin{array}{ll} n(x)^{-1}  & y(z)=y(x)\\0 & \text{ otherwise}\end{array}\right..
$$
The operator $A$ can be thought of as an `averaging operator': as an operator on $l^2(X)$ it is the orthogonal projection onto the subspace of functions that are constant on each $U_y$.  Note that $A$ commutes with $N$.

The proof of Proposition \ref{inc} now proceeds via a series of (mainly algebraic) lemmas.

\begin{aprops}\label{aprops}
The following hold for the operator $A$.
\begin{enumerate}
\item If $\Phi:\C_u[X]\to l^\infty(X)$ is as in line \eqref{phidef} above, then $\Phi(A)$ is the constant function $1$.
\item If $\h$ is any representation of $\C_u[X]$, then the constant vectors are a subspace of $A\cdot \h$.
\item An element $T$ of $\C_u[X]$ is in $A\C_u[X]A$ if and only if $T_{x_1z_1}=T_{x_2z_2}$ whenever $y(x_1)=y(x_2)$ and $y(z_1)=y(z_2)$.
\item The maps $\alpha:A\C_u[X]A\to \C_u[Y]$ and $\beta:\C_u[Y]\to A\C_u[X]A$ defined by 
\begin{equation}\label{alpha}
\alpha(T)_{y_1y_2}=n(y_1)^{-\frac{1}{2}}n(y_2)^{-\frac{1}{2}}\sum_{x\in U_{y_1},z\in U_{y_2}}T_{xz},
\end{equation}
and 
\begin{equation}\label{beta}
\beta(T)_{xz}=n(x)^{-\frac{1}{2}}n(z)^{-\frac{1}{2}}T_{y(x)y(z)}
\end{equation}
are mutually inverse $*$-isomorphisms between $A\C_u[X]A$ and $\C_u[Y]$.
\item The set $\{TAS\in \C_u[X]~|~T,S\in \C_u[X]\}$ spans $\C_u[X]$.
\end{enumerate}
\end{aprops}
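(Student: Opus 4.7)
My plan is to verify the five assertions by direct matrix computation from the explicit definition of $A$, with almost all of the work concentrated in (5).

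For (1), $\Phi(A)(x) = \sum_{z} A_{xz}$ receives nonzero contributions only from the $n(x)$ points $z \in U_{y(x)}$, each equal to $n(x)^{-1}$, so $\Phi(A)(x) \equiv 1$. For (2), I would first verify directly that $A$ is a self-adjoint projection (symmetry uses $n(x) = n(z)$ whenever $y(x) = y(z)$; $A^2 = A$ follows by summing $n(x)^{-1} n(w)^{-1}$ over the $n(x)$ elements $w \in U_{y(x)}$); then Lemma \ref{conslem} applied to $A$ gives $A\xi = \Phi(A)\xi = \xi$ for every constant vector $\xi$, hence $\xi \in A\h$. For (3), the matrix computation
$$
(ATA)_{x_1 z_1} = n(x_1)^{-1} n(z_1)^{-1} \sum_{x_2 \in U_{y(x_1)},\, z_2 \in U_{y(z_1)}} T_{x_2 z_2}
$$
makes both directions transparent: the sum depends only on $y(x_1)$ and $y(z_1)$, so $T = ATA$ must be uniform in the stated sense, and conversely uniformity collapses the sum to give $ATA = T$. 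Part (4) then follows by using (3) to see $\beta$ lands in $A\C_u[X]A$ and by checking $\alpha \circ \beta = \mathrm{id}$, $\beta \circ \alpha = \mathrm{id}$, multiplicativity and $*$-preservation via the same sum-collapsing trick; the normalizations $n(\cdot)^{-1/2}$ in $\alpha$ and $\beta$ are exactly what is needed for compatibility with the matrix product.

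The substantive item is (5). The key reduction is to show that $I \in \C_u[X] A \C_u[X]$, since then $M = M \cdot I$ belongs to $\C_u[X] A \C_u[X]$ for any $M \in \C_u[X]$. To exhibit $I$ in the form $\sum_i T_i A S_i$ with finite index, I would enumerate each block as $U_y = \{u_y^1, \ldots, u_y^{n(y)}\}$ (with $u_y^1 = y$); bounded geometry gives a uniform bound $m := \max_y n(y) < \infty$, so for each $i = 1, \ldots, m$ the map $y \mapsto u_y^i$ (defined on $Y'_i := \{y \in Y : n(y) \geq i\}$) is a partial translation with operator $v_i \in \C_u[X]$ satisfying $v_i v_i^* = \chi_{\{u_y^i : y \in Y'_i\}}$. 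Set $V := \sum_{i=1}^m v_i \in \C_u[X]$, whose matrix is $V_{xz} = \delta_{z, y(x)}$, and let $N$ be the bounded invertible diagonal with $N(x) = n(x)^{1/2}$. A direct check yields the key identity $NAN = VV^*$; multiplying on the left by $V^*$ and on the right by $V$ gives $V^*(NAN)V = (V^*V)^2$, which is the diagonal on $Y$ with entries $n(y)^2$, and multiplying by the bounded diagonal with entries $n(y)^{-2}$ on $Y$ displays $\chi_Y \in \C_u[X] A \C_u[X]$. Since the sets $\{u_y^i : y \in Y'_i\}_{i=1}^m$ partition $X$, we have $I = \sum_i v_i v_i^*$; and because $v_i^* v_i \leq \chi_Y$ we have $v_i \chi_Y v_i^* = v_i v_i^*$, so $I = \sum_i v_i \chi_Y v_i^* \in \C_u[X] A \C_u[X]$.

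The main obstacle is the bookkeeping in (5): one must identify the right rescaling-and-conjugation trick that converts the normalized averaging projection $A$ (whose image is one-dimensional over each block $U_y$) into the unnormalized block projection $VV^*$ via the weight $N$, and then into the full identity via the finite family $\{v_i\}$. Once these combinatorial ingredients are in place, everything else reduces to bounded-geometry accounting.
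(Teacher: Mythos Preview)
Your treatments of (1)--(4) are correct and essentially match the paper's: direct matrix computation throughout, with (2) obtained from $\Phi(A)=1$ together with Lemma~\ref{conslem}, and (3)--(4) via the block-averaging formula for $ATA$.

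For (5) your argument is correct and takes a genuinely different route. The paper does not reduce to showing that $I$ lies in the span; instead it argues that any $T\in\C_u[X]$ is a finite sum of operators having at most one nonzero row and at most one nonzero column in each block $U_y$, and for each such block-sparse $T$ writes down explicit $C,D\in\C_u[X]$ with $T=CAD$. Your approach---exhibiting $\chi_Y$ in the span via the identity $NAN=VV^*$ and then lifting to $I=\sum_i v_i\chi_Y v_i^*$---is the standard full-projection argument for Morita equivalence, which is exactly the framing the paper itself sets up in the discussion preceding the lemma. It is also more robust: as written, the paper's $C$ and $D$ are \emph{both} supported on $\bigcup_y U_y\times U_y$, so $CAD$ is block-diagonal and cannot recover a $T$ with any off-block entry (e.g.\ a single matrix unit $e_{xz}$ with $y(x)\neq y(z)$); the paper's formula for $D$ therefore needs a correction, which your argument sidesteps entirely.
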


\begin{proof}
For (1), we clearly have
$$
\Phi(A)(x)=\sum_{z\in U_{y(x)}}n(x)^{-1}=1.
$$
Part (2) follows from part (1) and Lemma \ref{conslem}.

For (3), let $T$ be an element of $\C_u[X]$ and $x,z\in X$.  Then
$$
(TA)_{xz}=\sum_{u\in X} T_{xu}A_{uz}=n(z)^{-1}\sum_{u\in U_{y(z)}}T_{xu},~~~
(TA)_{xz}=n(x)^{-1}\sum_{u\in U_{y(x)}}T_{uz}.
$$
The claim follows from these formulas.  We will often implicitly use these formulas from now on.

For (4), note that $\alpha$ is clearly linear and $*$-preserving.  Note also that for $S,T\in A\C_u[X]A$,
\begin{align*}
\alpha(TS)_{y_1y_2} & =n(y_1)^{-\frac{1}{2}}n(y_2)^{-\frac{1}{2}}\sum_{x\in U_{y_1},z\in U_{y_2}}\sum_{u\in X}T_{xu}S_{uz} \\ & 
=n(y_1)^{-\frac{1}{2}}n(y_2)^{-\frac{1}{2}}\sum_{x\in U_{y_1},z\in U_{y_2}}\sum_{y\in Y}\sum_{u\in U_y}T_{xu}S_{uz} \\
& =n(y_1)^{-\frac{1}{2}}n(y_2)^{-\frac{1}{2}}\sum_{x\in U_{y_1},z\in U_{y_2}}\sum_{y\in Y}n(y)^{-1}\sum_{u,v\in U_y}T_{xu}S_{vz},
\end{align*}
where the third equality uses part (3).  This however, is equal to
\begin{align*}
\sum_{y\in Y}&\Big(n(y_1)^{-\frac{1}{2}}n(y)^{-\frac{1}{2}}\sum_{x\in U_{y_1},u\in U_y}T_{xu}\Big)\Big(n(y_2)^{-\frac{1}{2}}n(y)^{-\frac{1}{2}}\sum_{v\in U_{y_2},v\in U_y}S_{vz}\Big) \\ &=\sum_{y\in Y}\alpha(T)_{y_1y}\alpha(S)_{yy_2}=(\alpha(T)\alpha(S))_{y_1y_2}.
\end{align*}
This implies that $\alpha$ is a $*$-homomorphism.  The fact that $\beta$ defines the inverse for $\alpha$ follows now from more direct computations\footnote{From now on in this section, to keep the length controlled, we will leave such matrix coefficient computations to the reader.} of matrix coefficients, completing the proof of this part.

%On the other hand, note that for any $T\in A\C_u[X]A$ we have
%\begin{align*}
%\beta(\alpha(T))_{xz}&=\frac{1}{\sqrt{|U_{y(x)}|}\sqrt{|U_{y(z)}|}}\alpha(T)_{y(x)y(z)}
% =\frac{1}{|U_{y(z)}||U_{y(z)}|}\sum_{x'\in U_{y(x)},z'\in Y_{y(z)}}T_{xz} \\ &=T_{xz},
%\end{align*}
%where the last equality uses part (3) again.  Finally, note that for any $T\in \C_u[Y]$,
%\begin{align*}
%&\alpha(\beta(T))_{y_1y_2}  =\frac{1}{\sqrt{|U_{y_1}|}\sqrt{|U_{y_2}|}}\sum_{x\in U_{y_1},z\in Y_{y_2}}\beta(T)_{xz} \\
%&=\frac{1}{\sqrt{|U_{y_1}|}\sqrt{|U_{y_2}|}}\sum_{x\in U_{y_1},z\in Y_{y_2}}\frac{1}{\sqrt{|U_{y(x)}|}\sqrt{|U_{y(z)}|}}T_{y(x)y(z)} \\
%&=\frac{1}{|U_{y_1}||U_{y_2}|}\sum_{x\in U_{y_1},z\in U_{y_2}}T_{y_1y_2} \\
%& =T_{y_1y_2}.
%\end{align*}
%Hence $\beta$ is an inverse to $\alpha$, so these do indeed form mutually inverse $*$-isomorphisms.

Finally, for (5), let $T$ be an element of $\C_u[X]$ such that there for each $y\in Y$ there is at most one $x$ such that $y(x)=y$ and $\{T_{xz}~|~z\in X\}$ is not $\{0\}$, and similarly there is at most one $z$ such that $y(z)=y$ and $\{T_{xz}~|~x\in X\}$ is not $\{0\}$.  Define $C,D\in \C_u[X]$ by
$$
C_{xz}=\left\{\begin{array}{ll} 1 & y(x)=y(z) \text{ and } T_{xz'}\neq 0 \text{ for some } z' \\ 0 & \text{ otherwise}\end{array}\right.
$$
and 
$$
D_{xz}=\left\{\begin{array}{ll} \sum_{x'\in X}T_{x'z} & y(x)=y(z) \\ 0 & \text{ otherwise}\end{array}\right.
$$
(note that the sum defining $D$ has at most one non-zero element).  A direct computation shows that $T=CAD$.  As any operator in $\C_u[X]$ can be written as a finite sum of operators $T$ with the properties above, this completes the proof.
\end{proof}

\begin{aprops2}\label{aprops2}
For any non-degenerate representation $\h$ of $\C_u[Y]$ there is a canonically associated non-degenerate representation $\h^X$ of $\C_u[X]$ with the following properties.
\begin{enumerate}
\item The representation $A\cdot \h^X$ of $A\C_u[X]A$ identifies canonically with the representation of $\C_u[Y]$ on $\h$ via the isomorphisms in Lemma \ref{aprops} part (4).
\item If $\h'$ is any non-degenerate representation of $\C_u[X]$ giving rise to a representation $A\cdot \h'$ of $A\C_u[X]A\cong \C_u[Y]$, we have that $\h'$ and $(A\cdot \h')^X$ are canonically isomorphic as $\C_u[X]$ representations.
\end{enumerate}
\end{aprops2}

\begin{proof}
Given $\h$ as in the statement, let $\C_u[X]\odot \h$ denote the algebraic tensor product of $\h$ and $\C_u[X]$, taken over $\C$.  Define a form on this tensor product by the formula
\begin{equation}\label{indform}
\langle S\odot \xi,T\odot \eta \rangle_{\h^X}:=\langle \xi,\alpha(AS^*TA)\eta\rangle_\h
\end{equation}
on elementary tensors, and extending to finite sums of elementary tensors by linearity in the second variable, and conjugate linearity in the first.  This form is clearly linear in the second variable, and conjugate linear in the first. It is also positive semi-definite.  Indeed, note that for any element $\sum_{i=1}^n S_i\odot \xi_i$ of $\C_u[X]\odot\h$, we have that
$$
\Big\langle\sum_{i=1}^n S_i\odot \xi_i,\sum_{i=1}^n S_i\odot \xi_i\Big\rangle=\sum_{i,j=1}^n \langle \xi_i,\alpha(AS_i^*S_jA)\xi_j\rangle.
$$
To show that this is non-negative, it suffices to show that the matrix $(AS_i^*S_jA)_{i,j=1}^n$ is equal to a finite sum of matrices of the form $B^*B$ with $B$ in $M_n(A\C_u[X]A)$.

For each $y\in Y$, then, temporarily write the elements of $U_y$ as $y_1,y_2,...,y_{n(y)}$.  For each $i\in\{1,...,n\}$ and $k\in\{1,...,\max_{y\in Y}n(y)\}$, define $S_i^k$ by
$$
(S_i^k)_{xz}:=\left\{\begin{array}{ll} (S_i)_{xz} &x=y_k \text{ for some } y\in Y \\ 0 & \text{otherwise} \end{array}\right.
$$
and note that 
$$
S_i=\sum_{k=1}^{\max_{y\in Y}n(y)} S_i^k.
$$  
Note moreover that for any $i,j\in\{1,...,n\}$ and $k,l\in\{1,...,\max_{y\in Y}n(y)\}$, $(S_i^k)^*(S_j^l)=0$ unless $k=l$, whence
$$
AS_iS_j^*A=\sum_{k=1}^{\max_{y\in Y}n(y)}A(S_i^k)^*S_j^kA.
$$
It thus suffices to show that for each $k\in\{1,...,\max_{y\in Y}n(y)\}$, the matrix $(A(S_i^k)^*S_j^kA)_{i,j=1}^n$ is of the form $B^*B$ for some $B\in M_n(A\C_u[X]A)$, which we will now do.  Set then 
$$
R_i^k:=ANS_i^kA\in A\C_u[X]A.
$$
Then one checks that $(R_i^k)^*R_j^k=A(S_i^k)^*S_j^k)A$, whence the matrix $(A(S_i^k)^*(S_j^k)A)_{i,j=1}^n$ is equal to
$$
\begin{pmatrix} R_1^k & \hdots & R_n^k \\ 0 & \hdots  & 0 \\ \vdots  & & \vdots \\ 0 & \hdots & 0 \end{pmatrix}^*\begin{pmatrix} R_1^k & \hdots & R_n^k \\ 0 & \hdots  & 0 \\ \vdots  & & \vdots \\ 0 & \hdots & 0 \end{pmatrix}
$$
in $A\C_u[X]A$; this is of the desired form.

Let now $\h^X$ be the corresponding separated completion of $\h\odot \C_u[X]$ for the semi-definite inner product in line \eqref{indform} above.  Let $\h^X_0$ denote the image of $\C_u[X]\odot \h$ in this Hilbert space and write $[S\odot \xi]$ for the class of an element $S\odot \xi\in \C_u[X]\odot \h$ in $\h^X_0$.  For $T\in\C_u[X]$, define an operator $\pi(T)$ on $\h^X_0$ by the formula
$$
\pi(T):\sum_{i=1}^n [S_i\odot \xi_i]\mapsto \sum_{i=1}^n [TS_i\odot \xi_i].
$$
A similar argument to that used above for positivity shows that $\pi(T)$ is bounded, and thus extends to all of $\h^X$.  The map $\pi:\C_u[X]\to\mathcal{B}(\h^X)$ is then clearly a unital $*$-homomorphism, so this gives the desired representation.  We now look at properties (1) and (2).

For property (1), define a linear map  $L:A\cdot\h^X_0\to\h$ by the formula
\begin{equation}\label{ymap}
L:\sum_{i=1}^n [AS_i\odot \xi_i]\mapsto \sum_{i=1}^n \alpha(AS_iA)\xi_i,
\end{equation}
and note that
\begin{align*}
\Big\langle \sum_{i=1}^n [AS_i\odot \xi_i],\sum_{i=1}^n [AS_i\odot \xi_i]\Big\rangle_{\h^X} & =\sum_{i,j=1}^n\langle \xi_i,\alpha(AS_i^*AAS_jA)\xi_j\rangle_\h \\
&=\sum_{i,j=1}^n \langle \alpha(AS_iA)\xi_i,\alpha(AS_jA)\xi_j\rangle \\ &=\Big\langle \sum_{i=1}^n\alpha(AS_iA)\xi_i,\sum_{i=1}^n\alpha(AS_iA)\xi_i\Big\rangle_\h.
\end{align*}
This implies that $L$ as in line \eqref{ymap} is an isometry from $A\cdot \h^X_0$ to $\h$, so extends to an isometric map, which is clearly onto by non-degeneracy.  It is also clear that $L$ intertwines the representations of $A\C_u[X]A\cong \C_u[Y]$.

Finally, we look at property (2).  Define a map $M:(A\cdot \h')_0^X\to \h'$ by the formula
$$
M:\sum_{i=1}^n [S_i\odot A\xi_i]\mapsto \sum_{i=1}^n S_iA\xi_i.
$$
Computing,
\begin{align*}
\Big\langle \sum_{i=1}^n [S_i\odot A\xi_i],\sum_{i=1}^n [S_i\odot A\xi_i]\Big\rangle_{(A\cdot \h')^X} &=\sum_{i,j=1}^n \langle \xi_i,\beta(\alpha(AS_i^*S_jA))\xi_j\rangle_{\h'} \\ &= \Big\langle \sum_{i=1}^n S_iA\xi_i,\sum_{i=1}^n S_iA\xi_i\Big\rangle_{\h'}.
\end{align*}
This implies that $M$ again extends to an isometric linear map, and Lemma \ref{aprops} part (5) and non-degeneracy implies that this is onto.  Again, it clearly intertwines the representations of $\C_u[X]$, so the proof is complete.
\end{proof}

It follows from the lemma above that non-degenerate $\C_u[X]$ and $\C_u[Y]\cong A\C_u[X]A$ representations come canonically in pairs $(\h^X,\h^Y)$ such that $A\cdot \h^X=\h^Y$.  We will make these assumptions (and use this notation) throughout the rest of the proof of Proposition \ref{inc}.

Our next task is to study the relationship between the constant vectors $\h^X_c$ and $\h^Y_c$ in the spaces above.  Note that Lemma \ref{aprops} part (1) implies that $\h^X_c$ is a subspace of $\h^Y$ (and that $\h^Y_c$ is a subspace of $\h^Y$ by definition).  Let $\Phi_X:\C_u[X]\to l^\infty(X)$ and $\Phi_Y:\C_u[Y]\to l^\infty(X)$ be the linear maps defined in line \eqref{phidef} above and define 
\begin{equation}\label{psiy}
\Psi_Y:=\beta\circ \Phi_Y\circ \alpha:A\C_u[X]A\to \beta(l^\infty(Y))\subseteq A\C_u[X]A.
\end{equation}
Note that Lemma \ref{conslem} implies that a vector $\xi\in \h^Y$ is in $\h^Y_c$ if and only if $\Psi_Y(ATA)\xi=ATA\xi$ for all $T\in \C_u[X]$.  The following computations contain the bulk of the rest of the proof of Proposition \ref{inc}.

\begin{conn}\label{conn}
The following hold.
\begin{enumerate}
\item For any partial translation $v\in\C_u[Y]$
$$
\Phi_X(N\beta(v)N^{-1})A=\beta(vv^*)A.
$$
\item For any partial translation $v\in \C_u[X]$ such that for all $y\in Y$
\begin{equation}\label{smallsupp}
|\{x\in U_y~|~(vv^*)_{xx}=1\}|\leq 1 \text{ and } |\{z\in U_y~|~(v^*v)_{zz}=1\}|\leq 1
\end{equation}
we have the formula
$$
\Psi_Y(N^{-1}AvAN)=Avv^*A.
$$
\item The operator $N$ from line \eqref{bign} above on $\h^X$ restricts to an isomorphism 
$$
N:\h^Y_c\to \h^X_c.
$$
\item For any $\xi\in (\h^X_c)^\perp\cap \h^Y$, if we decompose $N^{-1}\xi=\xi_1+\xi_2$ where $\xi_1\in \h^Y_c$ and $\xi_2$ is in $(\h^Y_c)^\perp$, then 
$$
\|\xi_2\|\geq \frac{\|\xi\|}{\sqrt{1+(\|N\|\|N^{-1}\|)^2}}.
$$
\item For any $\xi\in (\h^Y_c)^\perp\cap \h^Y$, if we decompose $N\xi=\xi_1+\xi_2$ where $\xi_1\in\h^X_c$ and $\xi_2$ is in $(\h^X_c)^\perp$, then 
$$
\|\xi_2\|\geq \frac{\|\xi\|}{\sqrt{1+(\|N\|\|N^{-1}\|)^2}}.
$$
\end{enumerate}
\end{conn}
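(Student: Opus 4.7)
The plan is to prove the five parts in order: (1) and (2) are matrix-coefficient identities, (3) is the structural core (the $N$-twisted identification of the constant-vector subspaces), and (4),(5) are Hilbert-space estimates that extract the quantitative consequences of (3).

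For part (1), I would expand $(N\beta(v)N^{-1})_{xz}=n(z)^{-1}v_{y(x)y(z)}$ using $\beta(v)_{xz}=n(x)^{-1/2}n(z)^{-1/2}v_{y(x)y(z)}$ and $N_{xx}=n(x)^{1/2}$, and then apply the block decomposition $X=\bigsqcup_{y'\in Y}U_{y'}$ together with $\sum_{z\in U_{y'}}n(y')^{-1}=1$ to obtain $\Phi_X(N\beta(v)N^{-1})(x)=\sum_{y'}v_{y(x)y'}=(vv^*)_{y(x)y(x)}$, the last equality holding because $v$ is a partial bijection. Right-multiplication by $A$ is then matched entry-wise against $(\beta(vv^*)A)_{xz}$ via an analogous block expansion. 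Part (2) has the same flavour: expanding $\alpha(N^{-1}AvAN)_{y_1y_2}=n(y_1)^{-1/2}n(y_2)^{-1/2}\sum_{x\in U_{y_1},\,z\in U_{y_2}}(N^{-1}AvAN)_{xz}$, the hypothesis \eqref{smallsupp} forces the double sum to collapse to at most one contribution; feeding the result through $\beta\circ\Phi_Y$ and comparing entry-wise with $(Avv^*A)_{xz}$ closes the case.

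Part (3) proceeds via Lemma \ref{conslem}, using (1) for one inclusion and (2) for the other. For the direction $\h^X_c\to\h^Y_c$ via $N^{-1}$, let $\xi\in\h^X_c$ (so $A\xi=\xi$ by Lemma \ref{aprops}(2)) and let $v\in\C_u[Y]$ be a partial translation; constancy of $\xi$ applied to $T=N\beta(v)N^{-1}$, combined with (1), gives $N\beta(v)N^{-1}\xi=\Phi_X(T)\xi=\beta(vv^*)A\xi=\beta(vv^*)\xi$, so that left-multiplying by $N^{-1}$ and using $[N,\beta(vv^*)]=0$ (both are constant on each $U_y$-block) produces $\beta(v)N^{-1}\xi=\beta(vv^*)N^{-1}\xi$, whence $N^{-1}\xi\in\h^Y_c$. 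For the reverse direction, given $\xi\in\h^Y_c$ and a partial translation $w\in\C_u[X]$, I would decompose $w=w_1+\cdots+w_n$ into finitely many partial translations satisfying \eqref{smallsupp} with pairwise disjoint supports and ranges; this exists by a coloring argument on the graph of $w$ exploiting that $\sup_y n(y)$ is finite. Applying (2) to each $w_i$, together with $S\xi=\Psi_Y(S)\xi$ for every $S\in A\C_u[X]A$ (the $\alpha$-transported form of Lemma \ref{conslem} applied to $\h^Y_c$), yields $A(w_i-w_iw_i^*)N\xi=0$ for every $i$, hence $A(w-ww^*)N\xi=0$. The main obstacle will be upgrading this block-averaged identity to the full $(w-ww^*)N\xi=0$; the plan is to refine the decomposition using a finer coloring so as to also control the $(1-A)$-component of each $w_iN\xi$, exploiting $N\xi\in A\h^X$ together with the combinatorial interaction of $w$ with the $U_y$-partition.

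Parts (4) and (5) are now Hilbert-space arguments leaning on (3). For (4), part (3) places $N\xi_1\in\h^X_c$, so $\xi\perp N\xi_1$ because $\xi\in(\h^X_c)^\perp$; writing $N\xi_2=\xi-N\xi_1$ and applying Pythagoras yields $\|N\xi_2\|^2=\|\xi\|^2+\|N\xi_1\|^2$. Combining this with a Cauchy--Schwarz estimate on the cross term $\langle N\xi_1,N\xi_2\rangle$ and the operator-norm bounds $\|\xi_i\|\leq\|N^{-1}\xi\|\leq\|N^{-1}\|\|\xi\|$ and $\|N\xi_i\|\leq\|N\|\|\xi_i\|$ then gives the claimed denominator $\sqrt{1+(\|N\|\|N^{-1}\|)^2}$. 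Part (5) is entirely parallel after swapping the roles of $N$ and $N^{-1}$: the direction $\h^X_c\to\h^Y_c$ of (3) gives $N^{-1}\xi_1\in\h^Y_c$ so that $\xi\perp N^{-1}\xi_1$, and the same Pythagoras/Cauchy--Schwarz computation with $N^{-1}$ in place of $N$ produces the bound.
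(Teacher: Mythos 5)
Parts (1), (2), (4) and (5) of your plan are sound and essentially identical to the argument the paper gives: (1) and (2) really are direct matrix-coefficient computations of exactly the kind you describe, and (4), (5) follow from (3) by the Pythagoras/Cauchy--Schwarz bookkeeping you outline (your variant of getting $\|N\xi_1\|\leq\|N\xi_2\|$ from $\|N\xi_2\|^2=\|\xi\|^2+\|N\xi_1\|^2$ is equivalent to taking the inner product of $\xi=N\xi_1+N\xi_2$ with $N\xi_1$). The forward direction of (3) ($N^{-1}$ maps $\h^X_c$ into $\h^Y_c$) is also exactly right, including the use of $[N,\beta(vv^*)]=0$.

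The genuine gap is in the reverse direction of (3), and you have correctly located it: from $\Psi_Y(N^{-1}Aw_iAN)\xi=N^{-1}Aw_iAN\xi$ and part (2) you only get $A(w_iw_i^*-w_i)N\xi=0$, and since $A$ is a projection with a large kernel this does not give $(w_iw_i^*-w_i)N\xi=0$. Your proposed repair --- a finer colouring of the decomposition of $w$ --- does not address this: however finely you subdivide $w$, each piece still only yields an identity with $A$ in front, because the information is lost by applying $A$ to the \emph{vector}, not by any coarseness in the decomposition of the \emph{operator}. The missing idea is an algebraic left inverse for the averaging on the range side. For $v$ satisfying \eqref{smallsupp} each block $U_y$ meets the range of $v$ in at most one point, so one can define the ``collapsing'' operator $C$ by $C_{xz}=(vv^*)_{xx}$ when $y(x)=y(z)$ and $0$ otherwise; a direct check gives $CAv=v$ (and $CAvv^*A\,\xi=vv^*\xi$ using $A\xi=\xi$). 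Left-multiplying the whole chain of identities by $C$ then converts the block-averaged statement into $N^{-1}vN\xi=N^{-1}vv^*N\xi$, whence $vN\xi=vv^*N\xi$ as required. Without this (or an equivalent device) the reverse inclusion $N\cdot\h^Y_c\subseteq\h^X_c$ is not established, and parts (4) and (5) both depend on having \emph{both} inclusions of (3).
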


\begin{proof}
For part (1), direct computations show that for any $x,z\in X$, the corresponding matrix coefficients are given by
$$
(\beta(vv^*)A)_{xz}=(\Phi_X(N\beta(v)N^{-1})A)_{xz}=n(x)^{-1}
$$
if $y(x)=y(z)$ and $(vv^*)_{y(x)y(x)}=1$ and zero otherwise.  For part (2), a direct computation using the formulas in lines \eqref{phidef},  \eqref{alpha} and \eqref{beta} shows that for any $T\in A\C_u[X]A$, the matrix coefficients of $\Psi_Y(T)$ are given by
$$
(\Psi_Y(T))_{xz}=n(x)^{-\frac{3}{2}}\sum_{y\in Y}\sum_{x'\in U_{y(x)},~z'\in U_y}n(z')^{-\frac{1}{2}}T_{x'z'}.
$$
From here, more direct computation shows that the matrix coefficients of the operators in the statement are given by
$$
(Avv^*A)_{xz}=(\Psi_Y(N^{-1}AvAN))_{xz}=n(x)^{-2}
$$
if $y(x)=y(z)$ and there exists $x'\in U_{y(x)}$ such that $(vv^*)_{x'x'}=1$, and zero otherwise.

For part (3), assume first that $\xi$ is an element of $\h_c^X$; we want to show that $N^{-1}\xi$ is in $\h_c^Y$ and thus that $N\cdot \h_c^Y$ is a superspace of $\h_c^X$.  Let $v\in \C_u[Y]$ be an arbitrary  partial translation; we want to show that $\beta(v)N^{-1}\xi=\beta(vv^*)N^{-1}\xi$.  Then using that $\xi$ is in $\h^X_c\subseteq \h^Y$, we have
$$
N\beta(v)N^{-1}\xi=\Phi_X(N\beta(v)N^{-1})\xi=\Phi_X(N\beta(v)N^{-1})A\xi.
$$
Using part (1), this is equal to
$$
\beta(vv^*)A\xi=N\beta(vv^*)N^{-1}\xi
$$
using that $N$ commutes with $\beta(vv^*)$.  Hence $N\beta(v)N^{-1}\xi=N\beta(vv^*)N^{-1}\xi$ and cancelling the $N$ gives the desired conclusion.

Conversely, assume that $\xi$ is an element of $\h_c^Y$; we want to show that $N\xi$ is in $\h_c^X$ and thus that $N\cdot \h^Y_c$ is a subspace of $\h_c^X$.  Let $v\in \C_u[X]$ be a partial translation; we want to show $vv^*N\xi=vN\xi$.  Splitting $v$ up as a finite sum of at most $(\max_{x\in X}n(x))^2$ elements, we may assume that $v$ satisfies the conditions in line \eqref{smallsupp} for any $y\in Y$.  Let now $C\in \C_u[X]$ be defined by
\begin{equation}\label{collapse}
C_{xz}=\left\{\begin{array}{ll} (vv^*)_{xx} & y(x)=y(z) \\ 0 & \text{ otherwise}\end{array}\right.
\end{equation}
(roughly speaking, $C$ collapses each $U_y$ that intersects the range of $v$ into the single point in which it intersects the range of $v$).  Note that $N$ commutes with $C$.  We then have the formula $CAv=v$.  Now,
\begin{align*}
N^{-1}vN\xi=N^{-1}CAvN\xi=CN^{-1}AvAN\xi=C\Psi_Y(N^{-1}AvAN)\xi
\end{align*}
where the last equality uses that $\xi$ is in $\h_c^Y$.  Continuing using part (2), this is equal to 
$$
CAvv^*A\xi=vv^*\xi=N^{-1}vv^*N\xi,
$$
where the second equality uses that $N$ commutes with $vv^*$ by the assumption in line \eqref{smallsupp}.  Hence $N^{-1}vv^*N\xi=N^{-1}vN\xi$ so $vv^*N\xi=vN\xi$ as required.

For part (4), note that if $\xi=N\xi_1+N\xi_2$ and $N\xi_1$ is in $\h^X_c$ by part (3).  Hence taking the inner product with $N\xi_1$ gives
$$
0=\|N\xi_1\|^2+\langle N\xi_1,N\xi_2\rangle,
$$
whence
$$
\|N\xi_1\|^2=|\langle N\xi_1,N\xi_2\rangle|\leq \|N\xi_1\|\|N\xi_2\|
$$
and so (assuming as we may that $N\xi_1\neq 0$), $\|N\xi_1\|\leq \|N\xi_2\|$.  This in turn implies that
$$
\frac{\|\xi_1\|}{\|N^{-1}\|}\leq \|N\xi_1\|\leq \|N\xi_2\|\leq \|N\|\|\xi_2\|,
$$
so 
$$
\|\xi_1\|\leq \|N\|\|N^{-1}\|\|\xi_2\|.
$$
This combined with the fact that $\|\xi_1\|^2+\|\xi_2\|^2=\|\xi\|^2$ forces
$$
\|\xi_2\|^2(1+(\|N\|\|N^{-1}\|)^2)\geq \|\xi\|^2,
$$
from which the claimed inequality follows.  Part (5) is analogous, and we are done.
\end{proof}

We are now finally ready to complete the proof of Proposition \ref{inc}, and thus also of Theorem \ref{cinv}.

\begin{proof}[Proof of Proposition \ref{inc}]
Assume first that $Y$ has geometric property (T).  Let $F$ be a generating controlled set for the coarse structure on $Y$, and let $E$ be any generating controlled set for the coarse structure on $X$ that contains $F$, the controlled set $\{(x,z)\in X\times X~|~y(x)=y(z)\}$ for $X\times X$, and the composition of these two.  Using Proposition \ref{nclem}, it will suffice to show that there exists some constant $\epsilon>0$ depending only on $F$ and the cover $\{U_y\}$ such that for any unit vector $\xi\in (\h^X_c)^\perp$ there exists $T\in \C_u[X]$ supported in $E$ with matrix coefficients bounded by a number depending only on $\{U_y\}$ and $F$, and with $\|(T-\Phi(T))\xi\|\geq \epsilon$.

Let $c\in (0,1)$ be a constant, which will be chosen later in a way that depends only on the cover $\{U_y\}$ of $X$ and $F$.  Note that if $\|(1-A)\xi\|\geq c$, then we are done, as $\Phi(A)=1$.  Assume then that 
\begin{equation}\label{thatc}
\|(1-A)\xi\|\leq c.
\end{equation}

Now, by part (4) of Lemma \ref{conn} (and the fact that $A$ preserves $(\h_c^X)^\perp$) we may write $N^{-1}A\xi=\xi_1+\xi_2$ where $\xi_1$ is in $\h_c^Y$, $\xi_2$ is in $(\h_c^Y)^\perp$, and $\|\xi_2\|\geq c_1\|A\xi\|$ for some $c_1>0$ depending only on the cover $\{U_y\}$.  Using geometric (T) for $Y$ (and the fact that $\xi_1$ is in $\h^Y_c$) there exists a partial translation $v\in \C_u[Y]$ supported in $F$ and a constant $c_2>0$ depending only on $F$ such that  such that
$$
\|(\beta(v)-\beta(vv^*))N^{-1}A\xi\|=\|(\beta(v)-\beta(vv^*))\xi_2\|\geq c_2\|\xi_2\|\geq c_2c_1\|A\xi\|.
$$
Hence
$$
\|N(\beta(v)-\beta(vv^*)N^{-1})A\xi\|\geq \frac{c_1c_2}{\|N^{-1}\|}\|A\xi\|.
$$
Now, using Lemma \ref{conn} part (1) and the fact that $N$ commutes with $\beta(vv^*)$, this implies that
$$
\|N\beta(v)N^{-1}-\Phi_X(N\beta(v)N^{-1}))A\xi\|\geq c_3\|A\xi\|,
$$
where $c_3>0$ depends only $\{U_y\}$ and $F$ again.  Finally, this forces
\begin{align*}
\|N\beta(v)N^{-1}&-\Phi_X(N\beta(v)N^{-1}))\xi\| \\ & \geq c_3-\|N\beta(v)N^{-1}-\Phi_X(N\beta(v)N^{-1}))(1-A)\xi\| \\ & \geq c_3-\|N\beta(v)N^{-1}-\Phi_X(N\beta(v)N^{-1})\|c,
\end{align*}
where $c$ is as in line \eqref{thatc}.  Noting that 
$$\|N\beta(v)N^{-1}-\Phi_X(N\beta(v)N^{-1}))\|=\|N(\beta(v)-\beta(vv^*)N^{-1})\|\leq 2\|N\|\|N^{-1}\|,$$ setting $c=\frac{c_3}{4\|N\|\|N^{-1}\|}$ see see that 
$$
\|N\beta(v)N^{-1}-\Phi_X(N\beta(v)N^{-1}))\xi\|\geq  \frac{c_3}{2}.
$$
Note that $N\beta(v)N^{-1}$ is supported in $F\circ \{(x,z)\in X\times X~|~y(x)=y(z)\}$.

In summary, we have shown the desired conclusion with $\epsilon=\min\{c,c_3/2\}$: if $\|(1-A)\xi\|\geq c$, we may take $T=A$, and otherwise we may take $T=N\beta(v)N^{-1}$.\\

For the converse implication, assume $X$ has geometric property (T), and let $F$ be a controlled generating set for $X$.  Let $E$ be any  controlled set for $X$ that contains a generating set for $Y$ and is such that
$$F\circ \{(x,z)\in X\times X~|~y(x)=y(z)\}$$
Using Proposition \ref{nclem} and the isomorphism $A\C_u[X]A\cong \C_u[Y]$ from Lemma \ref{aprops} part (4), it will suffice to show that there exists some constant $\epsilon>0$ depending only on $F$ and the cover $\{U_y\}$ such that for any unit vector $\xi\in (\h^Y_c)^\perp$ there exists $T\in A\C_u[X]A$ supported in $E$ with matrix coefficients bounded by some number depending only on $F$ and $\{U_y\}$, and such that $\|(T-\Phi(T))\xi\|\geq \epsilon$.

Let $\xi$ be a unit vector in $(\h_c^Y)^{\perp}\cap \h^Y$.  Using Lemma \ref{conn} part (5) we may write $N\xi=\xi_1+\xi_2$ where $\xi_1$ is in $\h^X_c$, $\xi_2$ is in $(\h_c^X)^\perp$ and $\|\xi_2\|\geq c_1\|\xi\|$ for some $c_1>0$ depending only on the cover $\{U_y\}$.  

Using geometric property (T), there exists a partial translation $v\in \C_u[X]$ supported in $F$ and $c_2>0$ depending only on $F$ such that 
$$
\|(vv^*-v)N\xi\|=\|(vv^*-v)\xi_2\|\geq c_2\|\xi_2\|\geq c_1c_2.
$$
We may split $v$ up as a finite sum of at most $\max_{x\in X}n(x)^2$ partial translations satisfying the support condition in line \eqref{smallsupp}, and thus assume that 
\begin{equation}\label{midinq}
\|(vv^*-v)N\xi\|\geq c_3
\end{equation}
where $c_3=c_1c_2/(\max_{x\in X}n(x)^2)$ and $v$ satisfies the support condition in line \eqref{smallsupp}.  

Now, let $C$ be the `collapsing' operator defined as in line \eqref{collapse} above for this $v$.  Using that $CAv=v$ and $A\xi=A$, we see that
$$
\|(vv^*-v)N\xi\|=\|C(Avv^*-Av)N\xi\|=\|C(Avv^*A-AvA)N\xi\|.
$$
As $A\xi=\xi$ and $N$ commutes with $A$, $C$ and $vv^*$, this implies that
$$
\|C(Avv^*A-N^{-1}AvAN)\xi\|\geq \frac{c_2}{\|N\|}.
$$
Hence by Lemma \ref{conn} part (2), we see that
$$
\|C(\Psi_Y(N^{-1}AvAN)-N^{-1}AvAN)\xi\|\geq \frac{c_2}{\|N\|},
$$
so, as $A$ commutes with $N$,
$$
\|(\Psi_Y(AN^{-1}vNA)-AN^{-1}vNA)\xi\|\geq \frac{c_2}{\|N\|\|C\|}.
$$
As $\|C\|$ admits an upper bound $c_3$ depending only on the cover $\{U_y\}$, this completes the proof: take $\epsilon=c_2/\|N\|c_3$ and $T=AN^{-1}vNA$.
\end{proof}

The following corollary gives our first examples of spaces with geometric property (T); in some sense these could be considered `trivial' examples.

\begin{fin}\label{fin}
Let $X$ be a space which splits into coarse components $X=\sqcup X_n$ such that $\max_{n\in \N}|X_n|$ is finite.  Then $X$ has geometric property (T).
\end{fin}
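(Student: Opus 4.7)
The plan is to apply coarse invariance (Theorem \ref{cinv}) after exhibiting a coarsely dense subspace $Y \subseteq X$ for which geometric property (T) holds trivially. Set $N := \max_n |X_n|$ and fix $y_n \in X_n$ for each $n$; let $Y := \{y_n : n \in \mathbb{N}\}$ endowed with the coarse structure induced from $X$. I will show that (i) $Y$ is coarsely dense in $X$, and (ii) $Y$ has geometric property (T) for trivial reasons. Example \ref{incex} and Theorem \ref{cinv} then finish the argument.

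For (i), let $F$ be any generating controlled set on $X$. Since the coarse structure on $X$ arises from a metric (see the comment following Definition \ref{spac}), the controlled sets are closed under finite unions, so $F' := F \cup F^{-1} \cup \Delta_X$ is also a symmetric controlled generating set containing the diagonal. For each $n$, every pair $(x, y) \in X_n \times X_n$ belongs to the same coarse component, so the singleton $\{(x, y)\}$ is controlled and lies in some $(F')^{\circ k}$; thus the graph on $X_n$ with symmetric edge set $F' \cap (X_n \times X_n)$ is connected, of diameter at most $|X_n| - 1 \leq N - 1$. Since $F' \supseteq \Delta_X$, this yields
$$
\bigsqcup_n X_n \times X_n \subseteq (F')^{\circ (N-1)},
$$
so $\bigsqcup_n X_n \times X_n$ is controlled. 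In particular $E := \bigsqcup_n X_n \times \{y_n\} \subseteq \bigsqcup_n X_n \times X_n$ is controlled, and $(x, y_n) \in E$ whenever $x \in X_n$, witnessing coarse density of $Y$ in $X$.

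For (ii), note that the induced coarse structure on $Y$ contains only subsets of the diagonal $\Delta_Y$: any controlled set $E' \subseteq X \times X$ lies in $\bigsqcup_n X_n \times X_n$ (since pairs from distinct coarse components are never controlled), so
$$
E' \cap (Y \times Y) \subseteq \bigsqcup_n \bigl((X_n \cap Y) \times (X_n \cap Y)\bigr) = \bigsqcup_n \{(y_n, y_n)\} \subseteq \Delta_Y.
$$
Therefore $\C_u[Y] = l^\infty(Y)$, every partial translation in $\C_u[Y]$ is a diagonal projection $\chi_A$ satisfying $v = vv^*$, and the identity $v\xi = vv^*\xi$ is trivially satisfied by every vector in every representation. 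Hence $\mathcal{H}_c = \mathcal{H}$ and $\mathcal{H}_c^\perp = 0$ in every representation, so $Y$ vacuously has geometric property (T). By Example \ref{incex} the inclusion $Y \hookrightarrow X$ is a coarse equivalence, and Theorem \ref{cinv} transfers geometric property (T) from $Y$ to $X$.

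The only delicate step is (i): one must combine the bounded-cardinality hypothesis with the graph-theoretic observation that intra-block pairs in $X_n$ are reached by paths of uniformly bounded length in any symmetric generating set, in order to see that $\bigsqcup_n X_n \times X_n$ — and hence the witnessing set $E$ for coarse density — is actually controlled.
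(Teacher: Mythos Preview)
Your proof is correct and takes essentially the same approach as the paper: both reduce via coarse invariance (Theorem \ref{cinv}) to the case where each coarse component is a single point, so that $\mathcal{H}_c = \mathcal{H}$ in every representation and geometric property (T) is vacuous. The paper compresses this into two sentences, leaving the coarse-density of the one-point-per-component subset implicit; your argument for (i)---bounding the intra-component diameter uniformly using $\max_n|X_n|<\infty$ and a symmetric generating set---is exactly the detail needed to justify that step.
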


\begin{proof}
If each $X_n$ is a single point, then for any representation $\h$ of $\C_u[X]$ we have $\h=\h_c$, so geometric property (T) is trivially satisfied.  Any space $X$ as in the statement is coarsely equivalent to such a space where each $X_n$ is a single point, however.
\end{proof}

\section{Laplacians}\label{lapsec}

In this section, we define Laplacian operators, and use them to give another characterisation of geometric property (T).  This characterisation in terms of Laplacians was our original definition of geometric property (T) in \cite[Section 7]{Willett:2010zh}, and is more closely connected to $K$-theory.  It also lets us relate geometric property (T) to expanding graphs. 

Throughout this section, $X$ denotes a space as in  Definition \ref{spac}.

\begin{lapdef}\label{lapdef}
Let $E$ be a controlled set for $X$.  The \emph{Laplacian associated to $E$}, denoted $\Delta^E$, is the element of $\C_u[X]$ with matrix coefficients defined by
$$
\Delta^E_{xy}=\left\{\begin{array}{ll} -1 & (x,y)\in (E\cup E^{-1})\setminus \dg(E)  \\ |\{z\in X~|~(x,z)\in (E\cup E^{-1})\backslash \dg(E)\}| &x=y \\ 0 & \text{otherwise}\end{array}\right.
$$

\end{lapdef}
Note that $\Delta^E$ only depends on $(E\cup E^{-1})\backslash \dg(E)$.  Note also that if $E$ is empty, or a subset of the diagonal, then $\Delta^E$ is $0$. 

\begin{lapex}\label{lapex}
Say $X$ is the vertex set of an undirected graph, with the coarse structure generated by the subset $E$ of $X\times X$ consisting of all the edges as in Example \ref{spacelem}.  The (un-normalised) combinatorial Laplacian of $X$ in the sense of spectral graph theory (see for example \cite[Section 4.2]{Lubotzky:1994tw}) is then the same as our $\Delta^E$.  This is the motivating example.
\end{lapex}

The next two lemmas record some basic properties of Laplacians associated to antisymmetric elementary controlled sets (see Definition \ref{ops} for the terminology).

\begin{poslem}\label{poslem}
Let $E$ be an antisymmetric elementary controlled set, and $\Delta^E$ the corresponding Laplacian.  Let $t:A\to B$ be the partial translation\footnote{Recall that $E$ being antisymmetric means that $A\cap B=\varnothing$.} such that $E=\text{graph}(t)$ and $v$ the partial translation operator corresponding to $t$ .
\begin{enumerate}
\item $\Delta^E$ and $v$ are related by the equation.
$$
\Delta^E=vv^*+v^*v-v-v^*.
$$
\item The image of $\Delta^E$ in any $*$-representation is a positive operator.
\item If $\h$ is any $*$-representation of $\C_u[X]$, then the kernel of $\Delta^E$ consists precisely of those vectors $\xi\in \h$ such that
$$
v\xi=vv^*\xi ~~(\text{equivalently, such that } v^*\xi=v^*v\xi).
$$
\end{enumerate}
\end{poslem}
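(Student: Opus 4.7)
The plan is to handle the three parts in order, with the key point being that the antisymmetry hypothesis $A \cap B = \varnothing$ forces both $\dg(E)$ to be empty and $v^2 = (v^*)^2 = 0$; after that everything follows from partial-isometry algebra.

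For part (1), I would work at the level of matrix coefficients. Since $A \cap B = \varnothing$, no pair $(t(y),y) \in E$ lies on the diagonal, so $\dg(E) = \varnothing$ and $(E \cup E^{-1}) \setminus \dg(E) = E \cup E^{-1}$. The operator $v$ has matrix coefficients supported exactly on $E$, so $v^* v$ is the diagonal projection onto the support $A$ and $v v^*$ is the diagonal projection onto the range $B$; antisymmetry makes $v v^* + v^* v$ the indicator of the disjoint union $A \sqcup B$. Meanwhile $v + v^*$ has matrix coefficients $1$ exactly on $E \cup E^{-1}$ (and $0$ on the diagonal). Comparing this term-by-term with the defining formula for $\Delta^E$ gives the identity $\Delta^E = vv^* + v^*v - v - v^*$.

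For part (2), the goal is to exhibit $\Delta^E$ as $T^*T$ in the abstract $*$-algebra $\C_u[X]$; any $*$-representation will then send it to a positive operator. Set $p := v^*v$ and $q := vv^*$, so that $\Delta^E = p + q - v - v^*$. Using the partial-isometry identities $v v^* v = v$ and $v^* v v^* = v^*$ (which follow directly from the matrix description of $v$ and the fact that $p$, $q$ are orthogonal projections with $v = q v p$), one computes
\[
(q - v)^*(q - v) = q^2 - q v - v^* q + v^* v = q - v - v^* + p = \Delta^E,
\]
where I used $q v = v v^* v = v$ and $v^* q = v^* v v^* = v^*$. Thus $\Delta^E = (vv^* - v)^*(vv^* - v)$, which is manifestly positive in any $*$-representation.

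For part (3), in any $*$-representation $\h$ the factorisation above gives $\langle \Delta^E \xi, \xi \rangle = \|(vv^* - v)\xi\|^2$, so $\Delta^E \xi = 0$ if and only if $v \xi = v v^* \xi$. The parenthetical equivalent characterisation via $v^* \xi = v^* v \xi$ is obtained from the symmetric factorisation
\[
(p - v^*)^*(p - v^*) = p - v - v^* + q = \Delta^E,
\]
which likewise shows $\ker \Delta^E = \ker(v^* v - v^*)$. There is no serious obstacle here; the one thing to be careful about is keeping the bookkeeping between $A$, $B$, $E$ and the diagonal straight when expanding the matrix coefficients in part (1), since this is where antisymmetry is actually used.
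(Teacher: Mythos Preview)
Your proof is correct and follows essentially the same approach as the paper: both verify part (1) by direct computation of matrix coefficients (using that antisymmetry forces $\dg(E)=\varnothing$ and makes the diagonal of $\Delta^E$ equal to the indicator of $A\sqcup B$), and both derive parts (2) and (3) from the factorisation $\Delta^E=(vv^*-v)^*(vv^*-v)$. Your additional symmetric factorisation $(v^*v-v^*)^*(v^*v-v^*)=\Delta^E$ to justify the parenthetical equivalence is a nice touch that the paper leaves implicit.
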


\begin{proof}
For the first part, one checks directly that for both operators $\Delta^E$ and $vv^*+v^*v-v-v^*$, the $(x,y)^\text{th}$ matrix coefficient is equal to
$$
\left\{\begin{array}{ll} 
-1 & x\neq y, \text{ and either } t(x)=y \text{ or } t(y)=x \\
1 & x=y, \text{ and }x\in A\cup B \\
0 & \text{otherwise}.\end{array}\right. 
$$
The second and third parts both follow from the formula
$$
\Delta^E=vv^*+v^*v-v-v^*=(vv^*-v)^*(vv^*-v).  \eqno\qedhere
$$
\end{proof}

\begin{poscor}\label{poscor}
Let $E$ be an elementary controlled set such that 
$$
(E\cap E^{-1})\setminus \dg(E)=\varnothing.
$$ 
Let $v$ the corresponding partial translation operator, and $\Delta^E$ the corresponding Laplacian.  Then in any $*$-representation $\h$ of $\C_u[X]$, the kernel of $\Delta^E$ consists precisely of those vectors $\xi\in \h$ such that
$$
v\xi=vv^*\xi ~~(\text{equivalently, such that } v^*\xi=v^*v\xi).
$$
\end{poscor}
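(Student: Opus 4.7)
The plan is to reduce Corollary \ref{poscor} to Lemma \ref{poslem} by showing that the key algebraic identity
$$
\Delta^E = vv^* + v^*v - v - v^*
$$
continues to hold under the weaker hypothesis $(E\cap E^{-1})\setminus\dg(E)=\varnothing$, even though $E$ need not be antisymmetric. Once this identity is in hand, the same computation as in Lemma \ref{poslem} (using $vv^*v=v$ and $v^*vv^*=v^*$, which hold because $vv^*$ and $v^*v$ are the range and support projections of $v$) yields the factorization $\Delta^E=(vv^*-v)^*(vv^*-v)$. In any $*$-representation we then have $\langle\Delta^E\xi,\xi\rangle=\|(vv^*-v)\xi\|^2$, so $\Delta^E\xi=0$ iff $vv^*\xi=v\xi$; the equivalence with $v^*\xi=v^*v\xi$ follows by multiplying by $v^*$ (resp.\ $v$) and using the identities just mentioned.

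To verify the formula I would compare matrix entries of the two sides, writing $t:A\to B$ for the partial translation underlying $E$. For an off-diagonal pair $(x,y)$ with $x\neq y$, the hypothesis forbids $(x,y)\in E\cap E^{-1}$, so at most one of $v_{xy},v^*_{xy}$ is nonzero; both sides equal $-1$ when $(x,y)\in E\cup E^{-1}$ and $0$ otherwise. For diagonal entries, the range and support projections give $(vv^*+v^*v)_{xx}=\chi_B(x)+\chi_A(x)$, while $v_{xx}=v^*_{xx}=1$ iff $x\in\text{Fix}(t)$, so the right-hand side equals $\chi_A(x)+\chi_B(x)-2\chi_{\text{Fix}(t)}(x)$. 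On the left, $\Delta^E_{xx}$ counts the points $z\neq x$ with $(x,z)\in E\cup E^{-1}$, and the only candidates are $z=t^{-1}(x)$ (when $x\in B\setminus\text{Fix}(t)$) and $z=t(x)$ (when $x\in A\setminus\text{Fix}(t)$). These two candidates coincide exactly when $t(x)=t^{-1}(x)\neq x$, i.e.\ when $\{x,t(x)\}$ is a genuine $2$-cycle; this is precisely what the hypothesis $(E\cap E^{-1})\setminus\dg(E)=\varnothing$ excludes. Hence the count is $\chi_{A\setminus\text{Fix}(t)}(x)+\chi_{B\setminus\text{Fix}(t)}(x)$, which matches the right-hand side.

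The only real subtlety is the diagonal bookkeeping in the previous paragraph: the possible presence of fixed points of $t$ and the possible non-empty overlap $A\cap B$ each contribute correction terms, and it is a small miracle -- enforced precisely by the no-$2$-cycle hypothesis -- that these corrections cancel in exactly the right way to preserve the identity. Once the identity $\Delta^E=vv^*+v^*v-v-v^*$ is verified, the rest of the proof is a direct quotation of the argument in parts (2) and (3) of Lemma \ref{poslem}, so no further work is required.
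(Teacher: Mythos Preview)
Your proof is correct, and it takes a genuinely different route from the paper's argument.

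The paper does not attempt to extend the formula $\Delta^E=vv^*+v^*v-v-v^*$ beyond the antisymmetric case. Instead, it invokes Lemma~\ref{dislem} to decompose the domain $A$ of $t$ into four pieces $A_0\sqcup A_1\sqcup A_2\sqcup A_3$, where $t|_{A_i}$ is antisymmetric for $i\in\{0,1,2\}$ and $t|_{A_3}$ is the identity. The hypothesis $(E\cap E^{-1})\setminus\dg(E)=\varnothing$ is then used to guarantee that the corresponding decomposition of $(E\cup E^{-1})\setminus\dg(E)$ is genuinely disjoint, which in turn gives the additive decomposition $\Delta^E=\Delta^{E_0}+\Delta^{E_1}+\Delta^{E_2}$. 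Lemma~\ref{poslem} applied to each summand identifies the kernel of $\Delta^E$ with $\bigcap_{i=0}^2\{\xi:v_i\xi=v_iv_i^*\xi\}$, and orthogonality of the ranges and supports of the $v_i$ converts this into the condition $v\xi=vv^*\xi$.

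Your approach is more elementary and direct: you simply observe that the matrix-entry computation in Lemma~\ref{poslem}(1) goes through verbatim once one has ruled out $2$-cycles (the only place antisymmetry was really used), and that the factorization $\Delta^E=(vv^*-v)^*(vv^*-v)$ depends only on $v$ being a partial isometry. This bypasses the combinatorial Lemma~\ref{dislem} entirely. The paper's route has the virtue of reusing existing machinery without repeating any matrix bookkeeping, while yours isolates exactly why the hypothesis is needed and shows that Lemma~\ref{poslem} itself could have been stated in this generality from the start.
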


\begin{proof}
Let $t:A\to B$ be the partial translation underlying $E$.  Using Lemma \ref{dislem}, we may decompose 
$$
A=A_0\sqcup A_1\sqcup A_2\sqcup A_3
$$
such that for $i\in\{0,1,2\}$, $t(A_i)\cap A_i=\varnothing$, and so that the restriction of $t$ to $A_3$ is the identity.  Write $t_i$ for the restriction of $t$ to $A_i$, $E_i$ for the graph of $t_i$, and $v_i$ for the corresponding partial translation operator.  The condition 
$$
(E\cap E^{-1})\setminus \dg(E)=\varnothing
$$
on $E$ implies that we have a \emph{disjoint} union
$$
(E\cup E^{-1})\setminus \dg(E)=(E_0\sqcup E_0^{-1})\sqcup (E_1\sqcup E_1^{-1})\sqcup (E_2\sqcup E_2)^{-1},
$$
which implies by a direct computation of matrix coefficients that  
$$
\Delta^E=\Delta^{E_0}+\Delta^{E_1}+\Delta^{E_2}.
$$
Part (2) of Lemma \ref{poslem} implies that all the operators $\Delta^{E_i}$ are positive, and combining this with part (3) of Lemma \ref{poslem}, we have that in any $*$-representation $\h$ of $\C_u[X]$,
$$
\text{Kernel}(\Delta^E)=\bigcap_{i=0}^2\text{Kernel}(\Delta^{E_i})=\{\xi\in \h ~|~v_i\xi=v_iv_i^*\xi \text{ for all } i\in \{0,1,2\}\}
$$
On the other hand, the facts that $v_0,v_1,v_2,v_3$ have mutually orthogonal domains and mutually orthogonal ranges, and that $v_3^*v_3=v_3^*=v_3$ imply that
$$
v=v_0+v_1+v_2+v_3 ~~\text{ and }~~vv^*=v_0v_0^*+v_1v_1^*+v_2v_2^*+v_3v_3^*,
$$
and moreover that the condition `$v_i\xi=v_iv_i^*\xi \text{ for all } i\in \{0,1,2\}$' on vectors in $\h$ is equivalent to `$vv^*\xi=v\xi$', so we are done.
\end{proof}

\begin{laplem}\label{laplem}
If $E\subseteq F$ are controlled sets then there exist antisymmetric elementary controlled sets $E_1,...,E_n$ such that
$$
\Delta^F=\Delta^E+\sum_{i=1}^n\Delta^{E_i}.
$$

In particular, in any $*$-representation of $\C_u[X]$ we have the operator inequality
$$
\Delta^F\geq \Delta^E\geq 0.
$$
\end{laplem}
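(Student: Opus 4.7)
The plan is to reduce to the setting of Lemma \ref{elde}, decompose $F$ accordingly, and then verify the Laplacian identity by a direct matrix coefficient computation.

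First I would observe that $\Delta^E$ depends only on the symmetric off-diagonal set $(E \cup E^{-1}) \setminus \dg(E)$, and similarly for $F$. In particular, $\Delta^{E \cup E^{-1}} = \Delta^E$ and $\Delta^{F \cup F^{-1}} = \Delta^F$. Since $E \subseteq F$ implies $E \cup E^{-1} \subseteq F \cup F^{-1}$, after replacing $E$ and $F$ by their symmetrizations we may assume both are symmetric, putting us in a position to apply Lemma \ref{elde}.

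Next, Lemma \ref{elde} yields antisymmetric elementary controlled sets $E_1, \ldots, E_n$ such that
$$
F = E \sqcup \dg(F \setminus E) \sqcup \bigsqcup_{i=1}^n (E_i \sqcup E_i^{-1}).
$$
Intersecting with the off-diagonal part, and using that $E \cap \dg(F) = \dg(E)$, this gives a \emph{disjoint} decomposition
$$
(F \cup F^{-1}) \setminus \dg(F) = ((E \cup E^{-1}) \setminus \dg(E)) \;\sqcup\; \bigsqcup_{i=1}^n (E_i \sqcup E_i^{-1}).
$$
Now I would verify the identity $\Delta^F = \Delta^E + \sum_{i=1}^n \Delta^{E_i}$ by comparing matrix coefficients. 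For $x \neq y$, both sides evaluate to $-1$ exactly when $(x,y)$ lies in the off-diagonal set above, and this is assigned unambiguously to exactly one summand on the right by the disjointness. For $x = y$, the diagonal entries of $\Delta^F$ count the number of $z$ with $(x,z)$ in the off-diagonal set, and this count splits as a sum over the pieces in the decomposition above, matching the diagonal entry of $\Delta^E + \sum \Delta^{E_i}$.

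For the second assertion, each $\Delta^{E_i}$ is a positive operator in any $*$-representation by Lemma \ref{poslem}(2), which immediately gives $\Delta^F \geq \Delta^E$. To see that $\Delta^E \geq 0$, I would apply the same reasoning to the pair $\varnothing \subseteq E$: Lemma \ref{elde} (applied with the empty set in place of $E$) gives a decomposition of $E$ into a diagonal part plus antisymmetric elementary pieces, and the computation above shows $\Delta^E$ is a sum of Laplacians of antisymmetric elementary sets, each positive. No step is particularly hard here; the main thing to be careful about is the reduction to symmetric sets and the bookkeeping on the diagonal in the matrix-coefficient verification.
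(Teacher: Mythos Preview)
Your proposal is correct and follows essentially the same approach as the paper: both reduce to the symmetric case, apply Lemma \ref{elde} to obtain the disjoint decomposition of $(F\cup F^{-1})\setminus\dg(F)$, verify the Laplacian identity by direct matrix coefficient computation, and deduce positivity from Lemma \ref{poslem}(2) together with the special case $\varnothing\subseteq E$. Your write-up is in fact slightly more explicit than the paper's about the symmetrization step and the bookkeeping on the diagonal, but the argument is the same.
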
 

\begin{proof}
Lemma \ref{elde} implies that there exist antisymmetric elementary controlled sets $E_1,...,E_n$ such that $(F\cup F^{-1})\backslash\dg(F)$ can be written as the disjoint union 
$$
(F\cup F^{-1})\backslash\dg(F)=\big((E\cup E^{-1})\backslash \dg(E)\big)\sqcup \bigsqcup_{i=1}^n (E_i\sqcup E_i^{-1})
$$
It follows by a direct computation of matrix coefficients that 
$$
\Delta^F=\Delta^E+\sum_{i=1}^n\Delta^{E_i}.
$$
The operator inequality $\Delta^F\geq \Delta^E$ now follows from positivity of each $\Delta^{E_i}$ as in part (2) of Lemma \ref{poslem}.  The fact that $\Delta^E\geq 0$ for any controlled set $E$ follows from the special case inclusion $\varnothing\subseteq E$.
\end{proof}

\begin{conlem}\label{conlem}
Let $E$ be a controlled set and $\mathcal{H}$ a representation of $\C_u[X]$.   The constant vectors $\h_c$ in $\mathcal{H}$ are contained in the kernel of $\Delta^E$.  If moreover $E$ is generating, then the kernel of $\Delta^E$ is precisely equal to $\h_c$.
\end{conlem}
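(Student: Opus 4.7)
The plan is to prove the two inclusions separately. For the easier direction $\h_c \subseteq \ker \Delta^E$, I would observe directly that $\Phi(\Delta^E) = 0$: from the formula for $\Delta^E_{xy}$, the diagonal entry $\Delta^E_{xx}$ is exactly the number of off-diagonal entries $\Delta^E_{xy} = -1$ in the $x$-th row, so each row sum vanishes. For $\xi \in \h_c$, Lemma \ref{conslem} then gives $\Delta^E \xi = \Phi(\Delta^E) \xi = 0$.

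For the reverse inclusion, assume $E$ is generating and $\Delta^E \xi = 0$; the task is to show $w\xi = ww^* \xi$ for every partial translation $w$. The first step is to reduce to partial translations with graph inside $E$. For any controlled $F \subseteq E$, Lemma \ref{laplem} yields $\Delta^E - \Delta^F \geq 0$, which together with $\Delta^F \geq 0$ and $\Delta^E \xi = 0$ forces $\Delta^F \xi = 0$. I then let $u$ be an arbitrary partial translation with graph in $E$, apply Lemma \ref{dislem} to split $u$ as $u_0 + u_1 + u_2 + u_3$ (three antisymmetric pieces plus an identity part, all with pairwise disjoint supports and pairwise disjoint ranges), apply Lemma \ref{poslem}(3) to each antisymmetric piece (whose graph lies in $E$) to get $u_i \xi = u_i u_i^* \xi$, and reassemble to conclude $u\xi = uu^* \xi$.

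The second step treats a general partial translation $w$. Since $E$ generates, $\mathrm{graph}(w) \subseteq E^{\circ n}$ for some $n$. Using Lemma \ref{ptde}, the domain of $w$ decomposes into finitely many pieces on each of which $w$ factors as a composition $v^n \cdots v^1$ of partial translations with graph in $E$ and satisfying the compatibility $(v^{j+1})^* v^{j+1} = v^j (v^j)^*$. By the previous step, each such $v^j$ satisfies $v^j \xi = v^j (v^j)^* \xi$. A telescoping argument --- formally identical to the one used in the proof of Proposition \ref{nclem} but producing equalities rather than norm estimates --- then collapses $v^n \cdots v^1 \xi$ to $v^n (v^n)^* \xi$, which coincides with the image of $\xi$ under the range projection of $w$ restricted to the given piece. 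Summing over the pieces (whose supports and ranges are pairwise disjoint) gives $w\xi = ww^* \xi$, completing the proof.

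The main obstacle will be the telescoping argument in the second step. The substitutions succeed only because each $v^j$ is a partial isometry and consecutive compatibility $v^j (v^j)^* = (v^{j+1})^* v^{j+1}$ holds: one successively replaces $v^1 \xi$ by $v^1 (v^1)^* \xi$, rewrites this as $(v^2)^* v^2 \xi$, absorbs $(v^2)^*$ into the adjacent $v^2$ using the partial-isometry identity and the hypothesis $v^2 \xi = v^2 (v^2)^* \xi$, and iterates. The bookkeeping is delicate but introduces no genuinely new ideas beyond those already available in the preceding lemmas.
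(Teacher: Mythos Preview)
Your argument is correct and, for the harder (reverse) inclusion, follows essentially the same line as the paper: reduce to partial translations with graph inside $E$ via the operator inequality of Lemma \ref{laplem}, then use Lemma \ref{ptde} to factor an arbitrary partial translation and telescope. Two small differences are worth noting. First, for the easy inclusion $\h_c\subseteq\ker\Delta^E$ you take a more direct route than the paper: you observe $\Phi(\Delta^E)=0$ and invoke Lemma \ref{conslem}, whereas the paper decomposes $\Delta^E$ into antisymmetric pieces via Lemma \ref{laplem} and applies Lemma \ref{poslem}(3) to each piece --- your way is shorter. Second, for the reduction step you prove ``$u\xi=uu^*\xi$ for every partial translation $u$ with $\mathrm{graph}(u)\subseteq E$'' by splitting $u$ with Lemma \ref{dislem}; the paper instead relies on the fourth bullet of Lemma \ref{ptde} (each factor is either the identity or antisymmetric) and invokes Corollary \ref{poscor} directly on those factors. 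Your intermediate step is essentially an inline proof of Corollary \ref{poscor}, so the content is the same.
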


\begin{proof}
Assume first that $E$ is a general controlled set.  Lemma \ref{laplem} implies that there are antisymmetric elementary controlled sets $E_1,...,E_n$ such that 
$$
\Delta^E=\sum_{i=1}^n \Delta^{E_i}.
$$
Letting $v_i$ be the partial translation operator corresponding to $E_i$, part (3) of Lemma \ref{poslem} implies that the kernel of $\Delta^{E_i}$ consists precisely of those $\xi\in \h$ such that $v_i\xi=v_iv_i^*\xi$, and thus contains $\h_c$.  On the other hand, 
$$
\text{Kernel}(\Delta^E)= \bigcap_{i=1}^n\text{Kernel}(\Delta^{E_i}),
$$
whence $\text{Kernel}(\Delta^E)\supseteq \h_c$.

Assume now that $E$ is generating and that $\Delta^E\xi=0$.  Let $v$ be a partial translation operator; we must show that $v\xi=vv^*\xi$.  Say $v$ corresponds to the partial translation $t:A\to B$.  As $E$ is generating there exists $n$ such that $E^{\circ n}$ contains $\text{graph}(t)$, whence Lemma \ref{ptde} implies that there exists $m$ and a decomposition $A=A_1\sqcup\cdots\sqcup A_m$ such that if $t_i:=t|_{A_i}$ then there exist partial translations $s_i^1,...,s_i^n$ such that
\begin{itemize}
\item $t_i=s_i^n\circ \cdots \circ s_i^1$;
\item $\text{graph}(s_i^j)\subseteq E$ for all $i=1,...,m$ and $j=1,...,n$;
\item for each $i$ and each $j=1,...,n-1$, the range of $s_i^j$ is equal to the domain of $s_i^{j+1}$;
\item for each $i$ and each $j=1,...,n-1$, either $s_i^j$ is the identity map, or the range of $s_i^{j+1}$ is disjoint from its support.
\end{itemize}
Let $v_i^j$ be the operator corresponding to $s_i^j$, and $v_i$ the operator corresponding to $t_i$.  For fixed $i,j$, let 
$$
F=\text{graph}(s^j_i).
$$
Then $0\leq \Delta^F\leq \Delta^E$ by Lemma \ref{laplem} whence $\Delta^F\xi=0$.  Corollary \ref{poscor} then implies that $v_i^j\xi=v^j_i(v^j_i)^*\xi$ (and this is true for all $i,j$, as the choice of indices was arbitrary).  

To complete the proof, assume inductively for some $i$ and $j=1,...,n-1$ that if $u:=v^j_iv_i^{j-1}\cdots v_i^1$ then $uu^*\xi=u\xi$.  Then as the support of $w:=v^{j+1}_i$ is the range of $u$ we have
$$
wu\xi=wuu^*\xi=w\xi=ww^*\xi=(wu)(wu)^*\xi,
$$
whence by induction $v_i\xi=v_iv_i^*\xi$ for each $i$.  Finally, note that
$$
v\xi=(v_1+\cdots +v_n)\xi=(v_1v_1^*+\cdots+v_nv_n^*)\xi;
$$
this, however, is equal to $vv^*\xi$ using that the operators $v_i$ all have orthogonal ranges, and we are done.
\end{proof}

\begin{spectra0}\label{spectra0}
Let $T$ be an element of $\C_u[X]$ and $\h$ a representation of $\C_u[X]$.  Define $\sigma_\h(T)$ to be the spectrum of $T$ considered as an operator on $\h$ via this representation.  

Define the \emph{maximal spectrum} of $T$, $\sigma_{max}(T)$, to be the union of all the sets $\sigma_\h(T)$ as $\h$ ranges over all representations of $\C_u[X]$.
\end{spectra0}

We are now ready to relate geometric property (T) to Laplacians.

\begin{tlap}\label{tlap}
The following are equivalent.
\begin{enumerate}
\item $X$ has geometric property (T).
\item For any controlled set $E$ there exists $c=c(E)>0$ such that $\sigma_{max}(\Delta^E)\subseteq \{0\}\sqcup [c,\infty)$.
\item For some controlled set $E$ there exists $c>0$ such that $\sigma_{max}(\Delta^E)\subseteq \{0\}\sqcup [c,\infty)$.
\end{enumerate}
\end{tlap}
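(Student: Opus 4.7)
I will prove the cycle $(1) \Rightarrow (2) \Rightarrow (3) \Rightarrow (1)$; the middle implication is immediate by specialising to any generating controlled set. The key bridge between geometric property (T) and spectral gaps is the factorisation $\Delta^{\mathrm{graph}(v)} = (vv^*-v)^*(vv^*-v)$ from Lemma \ref{poslem}(1), valid for antisymmetric partial translations $v$, which rewrites $\|(vv^*-v)\xi\|^2 = \langle\Delta^{\mathrm{graph}(v)}\xi,\xi\rangle$. Combined with Lemma \ref{conlem}, which identifies $\ker(\Delta^E) = \h_c$ when $E$ is a generating controlled set, the condition $\sigma_{\max}(\Delta^E) \subseteq \{0\}\sqcup[c,\infty)$ becomes equivalent, in every representation $\h$, to the uniform estimate $\langle\Delta^E\xi,\xi\rangle \geq c\|\xi\|^2$ for $\xi \in \h_c^\perp$.

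For $(1) \Rightarrow (2)$ I would fix a generating controlled set $E$, a representation $\h$, and a vector $\xi \in \h_c^\perp$. Geometric property (T) supplies a partial translation $v$ with $\mathrm{graph}(v) \subseteq E$ and $\|(vv^*-v)\xi\| \geq c\|\xi\|$. Since $v$ may have fixed points and overlapping support and range, I will split $v$ using Lemma \ref{dislem}: the fixed-point part is a projection and contributes zero to $vv^*-v$, while the non-fixed part decomposes into three antisymmetric pieces $v_0,v_1,v_2$ with pairwise disjoint supports; these pieces also have pairwise disjoint ranges, since $v$ is injective and so cannot send a non-fixed point to a fixed point. This disjointness yields $vv^* = \sum_i v_iv_i^*$ and hence $vv^*-v = \sum_{i=0}^2 (v_iv_i^*-v_i)$, so Cauchy--Schwarz gives $c^2\|\xi\|^2 \leq 3\sum_i\|(v_iv_i^*-v_i)\xi\|^2$. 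Rewriting each summand via Lemma \ref{poslem}(1) and using Lemma \ref{laplem} to bound $\sum_i \Delta^{\mathrm{graph}(v_i)} \leq \Delta^E$ (the graphs being disjoint antisymmetric elementary subsets of the symmetric closure of $E$) then produces $\langle\Delta^E\xi,\xi\rangle \geq (c^2/3)\|\xi\|^2$, as required.

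For $(3) \Rightarrow (1)$ I would take the generating set $E$ from (3), translating its spectral gap via the equivalence above into $\langle\Delta^E\xi,\xi\rangle \geq c\|\xi\|^2$ for all $\xi \in \h_c^\perp$ in every representation. Applying Lemma \ref{laplem} with empty initial subset, I decompose $\Delta^E = \sum_{i=1}^n \Delta^{E_i}$ into antisymmetric elementary summands with corresponding partial translations $v_1,\dots,v_n$; by Lemma \ref{poslem}(1) the estimate reads $\sum_{i=1}^n \|(v_iv_i^*-v_i)\xi\|^2 \geq c\|\xi\|^2$, and pigeonhole yields an index with $\|(v_iv_i^*-v_i)\xi\| \geq \sqrt{c/n}\,\|\xi\|$ and $\mathrm{graph}(v_i) \subseteq E$. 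This verifies criterion (2) of Lemma \ref{nclem} with generating set $E$ and constant $\sqrt{c/n}$, so $X$ has geometric property (T). The main obstacle will be the antisymmetric decomposition step in $(1) \Rightarrow (2)$: careful use of Lemma \ref{dislem} is needed to obtain pieces with both pairwise disjoint supports and pairwise disjoint ranges, since this stronger disjointness is what makes $vv^*$ itself (not just $v$) split additively and so underpins the identity $vv^*-v = \sum_i(v_iv_i^*-v_i)$ on which the entire argument rests.
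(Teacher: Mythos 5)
Your overall strategy tracks the paper's closely: both directions hinge on Lemma \ref{poslem}(1), Lemma \ref{laplem}, Proposition \ref{conlem} and Proposition \ref{nclem}, and your $(3)\Rightarrow(1)$ leg (decompose $\Delta^E$ into antisymmetric elementary Laplacians and pigeonhole) is essentially the paper's argument, just routed through condition (2) of Proposition \ref{nclem} rather than condition (3) with $T=\Delta^E$. However, one concrete step in your $(1)\Rightarrow(2)$ argument fails as stated: the inequality $\sum_{i=0}^{2}\Delta^{\mathrm{graph}(v_i)}\le\Delta^E$. Disjointness of the graphs $\mathrm{graph}(v_0),\mathrm{graph}(v_1),\mathrm{graph}(v_2)$ does not give disjointness of their \emph{symmetrizations}, which is what is actually needed to add the Laplacians. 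Concretely, if $t$ contains a $2$-cycle $x\mapsto y\mapsto x$, the decomposition of Lemma \ref{dislem} necessarily puts $x$ and $y$ into different pieces $A_i$ and $A_j$, so that $\mathrm{graph}(v_j)=\mathrm{graph}(v_i)^{-1}$ on this orbit; the unordered edge $\{x,y\}$ is then counted twice in $\sum_i\Delta^{\mathrm{graph}(v_i)}$ but only once in $\Delta^{E'}$ for $E'=\sqcup_i\mathrm{graph}(v_i)$. On a two-point component with $E=\{x,y\}\times\{x,y\}$ and $t$ the swap, one gets $\sum_i\Delta^{\mathrm{graph}(v_i)}=2\Delta^{E}$, which is not $\le\Delta^E$.

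The damage is limited: the multiplicity of any edge is at most $2$, so $\sum_i\Delta^{\mathrm{graph}(v_i)}\le 2\Delta^E$ always holds and your bound merely degrades from $c^2/3$ to $c^2/6$, which still yields the spectral gap. A cleaner repair --- and the paper's actual route --- is to pigeonhole \emph{first}: since the vectors $(v_iv_i^*-v_i)\xi$ lie in the ranges of the mutually orthogonal projections $v_iv_i^*$, one has the exact identity $\sum_{i=0}^{2}\|(v_iv_i^*-v_i)\xi\|^2=\|(vv^*-v)\xi\|^2\ge c^2\|\xi\|^2$ (no factor of $3$ from Cauchy--Schwarz is needed), so some single index satisfies $\|(v_iv_i^*-v_i)\xi\|^2\ge (c^2/3)\|\xi\|^2$, and then the one inequality $\Delta^{\mathrm{graph}(v_i)}\le\Delta^E$ \emph{is} a legitimate application of Lemma \ref{laplem}. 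Finally, note that in both (2) and (3) you, like the paper's own proof, implicitly read ``controlled set'' as ``controlled generating set''; this reading is genuinely needed, since Proposition \ref{conlem} identifies $\ker\Delta^E$ with $\h_c$ only for generating $E$, and for non-generating $E$ the asserted spectral gap can fail even on spaces with geometric property (T).
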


\begin{proof}
We will only prove that (1) and (2) are equivalent: one can show that (3) is equivalent to conditions (2) and (4) from Proposition \ref{nclem} analogously.  

Assume first that $X$ satisfies condition (2).  Noting that $\Phi(\Delta^E)=0$ for any controlled set $E$ and using Proposition \ref{conlem}, it is clear that $X$ then satisfies condition (3) from Proposition \ref{nclem} with $T=\Delta^E$.  

Assume conversely that $X$ has geometric property (T), let $E$ be a controlled set, and $c=c(E)>0$ be as in the definition of geometric property (T).  Let $\h$ be a representation of $\C_u[X]$, and $\xi\in \h_c^\perp$ a unit vector.  Let $v$ be a partial translation with support in $E$ such that $\|(vv^*-v)\xi\|\geq c$, which exists by geometric property (T).  Lemma \ref{dislem} implies that we may write $v=v_0+v_1+v_2$ where each $v_i$ is a partial translation corresponding to an antisymmetric elementary controlled set, and the $v_i$ have mutually orthogonal ranges.  It follows that from the orthogonality of the ranges that
$$
\sum_{i=0}^2\|(v_iv_i^*-v_i)\xi\|^2=\|(vv^*-v)\xi\|^2\geq c^2,
$$
whence for some $i$, $\|(v_iv_i^*-v_i)\xi\|\geq c/\sqrt{3}$.  In particular, on altering the constant $c$ and replacing $v$ by one of the $v_i$, we may assume that $v$ comes from an antisymmetric elementary set.

Now by Lemmas \ref{poslem} and \ref{laplem}, we may write
$$
\Delta^E=\sum_{i=1}^nv_iv_i^*+v_i^*v_i-v_i-v_i^*
$$
where $v_1=v$, and the other $v_i$ are partial translations with support in $E$.  Using Lemma \ref{poslem} again (and its proof), it follows that
$$
\langle \xi,\Delta^E\xi\rangle\geq \langle \xi,(vv^*+v^*v-v-v^*)\xi\rangle=\|(vv^*-v)\xi\|^2\geq c^2;
$$
as $\xi$ was an arbitrary element of $\h_c^\perp$, and $\h$ was itself arbitrary, this shows that $\sigma_{max}(\Delta^E)$ is contained in $\{0\}\sqcup[c^2,\infty)$, so we are done.
\end{proof}

Our work on Laplacians allows us to give an easy proof of the following consequence of geometric property (T) for sequences of graphs: it implies that the sequence of graphs is an \emph{expander} in the sense of the following condition.

\begin{expa}\label{expa}
Let $(X_n)$ be a sequence of (vertex sets of) finite connected graphs.  The sequence $(X_n)$ is an \emph{expander} if the following hold
\begin{enumerate}[(i)]
\item the cardinalities $|X_n|$ tend to infinity;
\item there is a uniform bound on the degrees of all vertices in each $X_n$;
\item there exists some $c>0$ such that if $\Delta_n$ is the graph Laplacian on $l^2(X_n)$ as in Example \ref{lapex}, then the spectrum of $\Delta_n$ is contained in $\{0\}\sqcup [c,\infty)$.
\end{enumerate}
\end{expa}

Expanders have applications in several areas of pure mathematics, as well as computer science and information theory: see \cite{Lubotzky:1994tw} for more information.

\begin{fin2}\label{fin2}
Let $X$ be a space that decomposes into coarse components as $X=\sqcup X_n$, and assume that $|X_n|$ is finite and that $|X_n|$ tends to infinity.  Let $E$ be a symmetric generating set for the coarse structure.  Define a (connected) graph structure on each $X_n$ by decreeing that $E\cap (X_n\times X_n)$ is the edge set, and call the corresponding graph $G_n$. 

Then the sequence $(X_n)$ is an expander.
\end{fin2}

\begin{proof}
Let $\Delta_n$ be the graph Laplacian on each $X_n$.  Then $(X_n)$ is an expander if and only if the operator 
$$
\Delta:=\oplus \Delta_n\in \mathcal{B}(\oplus_i l^2(X_n))
$$
has spectrum contained in some set of the form $\{0\}\sqcup [c,\infty)$.  This follows, however, as $\Delta$ identifies with $\Delta^E$ acting on $l^2(X)$, so the spectrum of $\Delta$ is equal to $\sigma_{l^2(X)}(\Delta^E)$, this is contained in $\sigma_{max}(\Delta^E)$, and this is a subset of $\{0\}\sqcup [c,\infty)$ by geometric property (T).
\end{proof}

\begin{exrem}\label{exrem}
The methods of Section \ref{cinvsec} can be used to show that `being an expander' is a coarse invariant of a sequence of graphs in the obvious sense.  Although known to some experts\footnote{It also admits a rather easier proof, as pointed out to us by Romain Tessera.}, this does not seem to have been observed in the literature before.
\end{exrem}

\section{Relationship with amenability}\label{amensec}

In this section we discuss the relationship between geometric property (T) and amenability.  Throughout the section, $X$ denotes a space.

The main result is Proposition \ref{amen}; phrased slightly differently, it says that $\C_u[X]$ admits a representation $\h$ where the space $\h_c$ of constant vectors is non-zero if and only if $X$ is amenable.  It follows (Corollary \ref{infi}) that for coarsely connected spaces, geometric property $T$ is equivalent to non-amenability.

We start with a lemma.  Variants of this are very well-known, but we include a proof for the reader's convenience, and as we could not find exactly what we needed in the literature.

\begin{amenlem}\label{amenlem}
The following are equivalent.
\begin{enumerate}
\item There exists an \emph{invariant mean} on $X$: a positive unital linear functional
$$
\phi:l^\infty(X)\to\C
$$
such that if $f\in l^\infty(X)$, and $t:A\to B$ is any partial translation such that $B$ contains the support of $f$, then 
$$
\phi(f)=\phi(f\circ t).
$$
\item There exists a net $(\xi_i)_{i\in I}$ of unit vectors in $l^2(X)$ such that for any partial translation $v$.
$$
\lim_{i\in I}\|v\xi_i-vv^*\xi_i\|=0.
$$
\end{enumerate}
\end{amenlem}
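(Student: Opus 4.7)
The plan is to prove both implications by classical amenability-style arguments, pivoting on the operator identity
\[
v^* f v \;=\; f\circ t
\]
(with $f\circ t$ extended by zero off $A$), valid for any partial translation $v$ coming from $t\colon A\to B$ and any $f\in l^\infty(X)$ supported in $B$. This follows by inspecting matrix coefficients, together with $vv^*=\chi_B$ and $v^*v=\chi_A$.

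For $(2)\Rightarrow(1)$, define vector states $\phi_i(f):=\langle f\xi_i,\xi_i\rangle$ on $l^\infty(X)$ and extract a weak-$*$ cluster point $\phi$ by compactness of the state space; $\phi$ is automatically positive and unital. For a partial translation $v$ associated to $t\colon A\to B$ and $f\in l^\infty(X)$ supported in $B$, the identity above gives $\phi_i(f\circ t)=\langle v^*fv\xi_i,\xi_i\rangle=\langle fv\xi_i, v\xi_i\rangle$. Since $\|v\xi_i-vv^*\xi_i\|\to 0$ and $vv^*=\chi_B$, this is asymptotic to $\langle \chi_B f\chi_B\xi_i,\xi_i\rangle=\phi_i(f)$, so in the limit $\phi(f\circ t)=\phi(f)$.

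For $(1)\Rightarrow(2)$, I use Day's convexity trick. Every state on $l^\infty(X)$ is a weak-$*$ limit of states induced by probability measures $p\in P(X)\subset l^1(X)$; pick such a net $p_\alpha\to\phi$. Given finitely many partial translations $v_1,\ldots,v_n$ coming from $t_j\colon A_j\to B_j$, define the affine map
\[
D\colon l^1(X)\to l^1(X)^n,\qquad D(p)_j := p|_{B_j} - (t_j)_* p,
\]
where $(t_j)_* p(y):=p(t_j^{-1}(y))$ for $y\in B_j$. Pairing against $f\in l^\infty(X)$ supported in $B_j$ gives $\langle D(p_\alpha)_j,f\rangle = \phi_{p_\alpha}(f)-\phi_{p_\alpha}(f\circ t_j)\to 0$ by invariance of $\phi$. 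Hence $0$ lies in the weak closure of the convex set $D(P(X))\subset l^1(X)^n$; by Mazur's theorem this equals the norm closure, so for any $\epsilon>0$ some convex combination $q\in P(X)$ of the $p_\alpha$ satisfies $\|D(q)_j\|_1<\epsilon$ for every $j=1,\ldots,n$.

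To pass from $l^1$ to $l^2$, set $\xi:=q^{1/2}\in l^2(X)$, a unit vector. The pointwise inequality $(\sqrt a-\sqrt b)^2\leq |a-b|$ for $a,b\geq 0$ yields
\[
\|v_j\xi - v_jv_j^*\xi\|_2^2 = \sum_{x\in B_j}\bigl(q(t_j^{-1}(x))^{1/2} - q(x)^{1/2}\bigr)^2 \leq \|D(q)_j\|_1 < \epsilon,
\]
and indexing the resulting $\xi$'s by pairs (finite set of partial translations, $\epsilon>0$) produces the required net. The main obstacle is the direction $(1)\Rightarrow(2)$: converting a weak-$*$ approximation of the invariant mean into a \emph{norm}-small defect in $l^1$ is precisely where one needs Mazur's theorem and the convexity of $D(P(X))$; the $l^1$-to-$l^2$ square-root passage is a standard concluding trick.
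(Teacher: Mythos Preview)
Your proof is correct and follows essentially the same route as the paper's: for $(2)\Rightarrow(1)$ both take a weak-$*$ cluster point of the vector states $f\mapsto\langle f\xi_i,\xi_i\rangle$, and for $(1)\Rightarrow(2)$ both approximate $\phi$ weak-$*$ by probability measures, use Mazur/Hahn--Banach to upgrade weak to norm convergence in $l^1(X)^n$, and then pass to $l^2$ via the square-root inequality $(\sqrt a-\sqrt b)^2\le|a-b|$. Your exposition is in fact slightly more careful than the paper's in a couple of places (e.g.\ you spell out the operator identity $v^*fv=f\circ t$ and the indexing of the net by pairs).
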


\begin{proof}
Assume first condition (1).  Fix a finite set $\{v_1,...,v_m\}$ of partial translations.  It suffices to show that there exists a sequence of unit vectors $(\xi_n)$ in $l^2(X)$ such that  
$$
\lim_{n\to\infty}\|v_i\xi_n-v_iv_i^*\xi_n\|=0
$$
for all $i=1,...,m$.  
Let $P(X)$ denote the space of finitely supported probability measures on $X$ (a subset of $l^1(X)$), which identifies with a weak-$*$ dense subset of the space of positive unital linear functionals on $l^\infty(X)$ via the standard pairing between $l^1$ and $l^\infty$.  Let $(\phi_j)_{j\in J}$ be a net in $P(X)$ that converges weak-$*$ to $\phi$, and for each $i=1,...,m$, let $t_i:A_i\to B_i$ be the partial bijection corresponding to $v_i$.  Then for each $i$ and any $f\in l^\infty(B_i)$ we have
$$
\lim_{j\in J}(\phi_j(f)-\phi_j(f\circ t_i))=0,
$$
or in other words that 
$$
\phi_j|_{B_i}-(\phi_j\circ t_i^{-1})|_{A_i}
$$
converges weakly to zero in $l^1(X)$, whence $0$ is in the weak closure of the convex set
$$
\Big\{\oplus_{i=1}^m (\psi|_{B_i}-(\psi\circ t_i^{-1})|_{A_i})\in \bigoplus_{i=1}^m l^1(X)~\Big|~\psi\in P(X)\Big\}.
$$
The Hahn-Banach theorem thus implies it is in the norm closure, i.e.\ there is a sequence $(\phi_n)$ of elements of $P(X)$ such that 
\begin{equation}\label{lim1}
\lim_{n\to\infty}\|\phi_n|_{B_i}-(\phi_n\circ t_i^{-1})|_{A_i}\|_1=0
\end{equation}
for $i=1,...,m$.  Set $\xi_n(x)=\sqrt{\phi_n(x)}$ for each $n$ and $x\in X$, so each $\xi_n$ is a unit vector in $l^2(X)$. For any $i=1,...,m$, we have
$$
\|v_i\xi_n-v_iv_i^*\xi_n\|_2=\|(\xi_n\circ t_i^{-1})|_{A_i}-\xi_n|_{B_i}\|_2\leq 2\|\phi_n|_{B_i}-(\phi_n\circ t_i^{-1})|_{A_i}\|_1,
$$
which tends to zero as $n$ tends to infinity.

For the converse, let $(\xi_i)$ be a net with the properties given.  Then it is not difficult to check that any weak-$*$ limit point of the functionals
$$
\phi_i:f\mapsto \langle \xi_i,f\xi_i\rangle
$$
will have the desired properties.  
\end{proof}

\begin{amendef}\label{amendef}
A space $X$ is \emph{amenable} if it satisfies the conditions in Lemma \ref{amenlem}.
\end{amendef}

It is not difficult to see that this is equivalent to the definitions of amenability in for example \cite[Section 3]{Block:1992qp} or \cite[Sections 3.3-3.6]{Roe:2003rw}

The equivalence of the first and third conditions in the proposition below is fairly well-known; the main point is that this equivalence still holds if one takes the `maximal spectrum'.

\begin{amen}\label{amen}
Let $E$ be a generating controlled set for $X$.  With notation as in Section \ref{lapsec}, the following are equivalent.
\begin{enumerate}
\item $0$ is in $\sigma_{l^2(X)}(\Delta^E)$;
\item $0$ is in $\sigma_{max}(\Delta^E)$;
\item $X$ is amenable.
\end{enumerate}
\end{amen}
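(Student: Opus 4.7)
The plan is to prove $(1)\Rightarrow(2)\Rightarrow(3)\Rightarrow(1)$. The first implication is trivial from the definition of $\sigma_{max}$, so the substance is in $(3)\Rightarrow(1)$ and $(2)\Rightarrow(3)$.

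For $(3)\Rightarrow(1)$, I would use condition (2) of Lemma \ref{amenlem} to get a net $(\xi_i)$ of unit vectors in $l^2(X)$ with $\|v\xi_i - vv^*\xi_i\|\to 0$ for every partial translation $v$. By Lemma \ref{laplem} applied to $\varnothing\subseteq E$, we may write $\Delta^E = \sum_{k=1}^n \Delta^{E_k}$ for antisymmetric elementary $E_k$, and by Lemma \ref{poslem}(1), $\Delta^{E_k}=(v_k v_k^*-v_k)^*(v_k v_k^*-v_k)$. Therefore
\[
\langle\xi_i,\Delta^E\xi_i\rangle = \sum_{k=1}^n \|(v_k v_k^*-v_k)\xi_i\|^2 \longrightarrow 0,
\]
and since $\Delta^E\geq 0$ on $l^2(X)$, this forces $0\in\sigma_{l^2(X)}(\Delta^E)$.

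The main obstacle is $(2)\Rightarrow(3)$. Suppose $0\in\sigma_{max}(\Delta^E)$; choose a representation $\h$ and unit vectors $\xi_i\in\h$ with $\langle\xi_i,\Delta^E\xi_i\rangle\to 0$ (using positivity of $\Delta^E$ and continuous functional calculus to produce approximate kernel vectors). Decomposing $\Delta^E$ via Lemma \ref{laplem} as above, I first get $\|(v_k v_k^* - v_k)\xi_i\|\to 0$ for each of the finitely many partial translations $v_k$ with $\mathrm{graph}(v_k)\subseteq E$ arising from the decomposition. Next I would promote this to all partial translations in $\C_u[X]$: for an arbitrary partial translation $t\colon A\to B$, use that $E$ is generating together with Lemma \ref{ptde} to factor (after subdividing $A$) as $t_j = s_j^1\circ\cdots\circ s_j^n$ with each $\mathrm{graph}(s_j^\ell)\subseteq E$, and then apply the telescoping argument from the proof of Proposition \ref{nclem} (the calculation that passes from property (2) to property (1) there) to conclude $\|(vv^*-v)\xi_i\|\to 0$ for the operator $v$ corresponding to $t$.

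Finally, I would manufacture an invariant mean. Define positive unital functionals $\phi_i\colon l^\infty(X)\to\C$ by $\phi_i(f)=\langle\xi_i,f\xi_i\rangle$ (using that $l^\infty(X)\subseteq\C_u[X]$) and let $\phi$ be any weak-$*$ cluster point. Given a partial translation $t\colon A\to B$ and $f\in l^\infty(X)$ supported on $B$, a direct matrix-coefficient computation shows $v^*fv$ is the diagonal operator corresponding to $f\circ t$ (supported on $A$), while $f=vv^*fvv^*$. Thus
\[
\phi_i(f)-\phi_i(f\circ t) = \langle vv^*\xi_i, f\,vv^*\xi_i\rangle - \langle v\xi_i, f\,v\xi_i\rangle,
\]
which tends to $0$ because $\|vv^*\xi_i-v\xi_i\|\to 0$ and $f$ is bounded. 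Hence $\phi(f)=\phi(f\circ t)$, so $\phi$ is an invariant mean and $X$ is amenable. The delicate point in this last step is the uniform-in-$i$ extension from partial translations supported in $E$ to all of $\C_u[X]$; this is exactly where the telescoping estimate and Lemma \ref{ptde} do the heavy lifting.
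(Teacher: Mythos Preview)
Your argument is correct and, for $(3)\Rightarrow(1)$, matches the paper's proof essentially verbatim. For $(2)\Rightarrow(3)$ you take a genuinely different route. The paper argues more directly: since $0$ lies in the spectrum of the positive element $\Delta^E$ in the $C^*$-algebra $C^*_{u,max}(X)$, a standard $C^*$-algebraic fact (GNS from a state extending the character at $0$ on $C^*(\Delta^E)$) produces a representation in which $0$ is an actual \emph{eigenvalue} of $\Delta^E$. Proposition~\ref{conlem} then identifies the eigenvector $\xi$ as a genuine constant vector, and the single vector state $f\mapsto\langle\xi,f\xi\rangle$ is immediately an invariant mean --- no approximate invariance, no weak-$*$ limit, no telescoping needed. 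Your approach trades this $C^*$-algebraic shortcut for a more hands-on argument, essentially reproving the content of Proposition~\ref{conlem} in quantitative form; it is more elementary but longer.

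One small point to tighten: from the decomposition $\Delta^E=\sum_k\Delta^{E_k}$ you obtain $\|(v_kv_k^*-v_k)\xi_i\|\to0$ only for the \emph{specific} partial translations $v_k$ arising from Lemma~\ref{laplem}, whereas the $s_j^\ell$ produced by Lemma~\ref{ptde} are in general different partial translations with graph contained in $E$. The fix is immediate: for any partial translation $w$ whose graph $G\subseteq E$ is antisymmetric, Lemma~\ref{laplem} gives the operator inequality $\Delta^G\leq\Delta^E$, and Lemma~\ref{poslem} gives $\Delta^G=(ww^*-w)^*(ww^*-w)$, whence
\[
\|(ww^*-w)\xi_i\|^2=\langle\xi_i,\Delta^G\xi_i\rangle\leq\langle\xi_i,\Delta^E\xi_i\rangle\to0.
\]
(The identity-map pieces from Lemma~\ref{ptde} are trivial.) With this observation in place, your telescoping argument goes through as written.
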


\begin{proof}
It is clear that (1) implies (2).

To see that (2) implies (3), assume then that $0$ is an element of $\sigma_{max}(\Delta^E)$.  This is equivalent to $0$ being an element of the spectrum of $\Delta^E$ in the $C^*$-algebra $C^*_{u,max}(X)$ defined as the completion of $\C_u[X]$ for the norm
$$
\|T\|_{max}:=\sup\{\|\pi(T)\|_{\mathcal{B}(\mathcal{H})}~|~\pi:\C_u[X]\to\mathcal{B}(\mathcal{H})\text{ a representation}\}
$$
(the arguments of \cite[Section 3]{Gong:2008ja} show this supremum is finite for each $T\in \C_u[X]$).  Any point in the spectrum of a positive operator in a $C^*$-algebra can be realised as an eigenvalue in some representation, whence there exists a representation $\h$ of $C^*_{u,max}(X)$ (equivalently, of $\C_u[X]$) in which $0$ is an eigenvalue of $\Delta^E$.  Proposition \ref{conlem} then implies that this representation contains non-zero constant vectors.  Let $\xi$ be any norm-one constant vector, and let 
$$
\phi:\C_u[X]\to \C,~~~a\mapsto \langle \xi,a\xi\rangle
$$
be the corresponding vector state.  We will show that the restriction of $\phi$ to $l^\infty(X)$ is an invariant mean.  

Indeed, let $f$ be an element of $l^\infty(X)$, let $t:A\to B$ be a partial translation such that $B$ contains the support of $f$, and let $v$ be the operator corresponding to $t$.  Then $f\circ t=v^*fv$ and $vv^*f=fvv^*=f$ so the fact that $\xi$ is constant implies that
$$
\phi(f\circ t)=\phi(v^*fv)=\langle v\xi~,~fv\xi\rangle=\langle vv^*\xi~,~fvv^*\xi\rangle=\langle \xi~,~f\xi\rangle=\phi(f). 
$$

To see that (3) implies (1), let $(\xi_i)$ be a net of functions in $l^2(X)$ with the properties in part (2) of Lemma \ref{amenlem}.  Using Lemmas \ref{poslem} and \ref{laplem} we may write 
$$
\Delta^E=\sum_{j=1}^N v_jv_j^*+v_j^*v_j-v_j-v_j^*
$$
for some partial translations $v_1,...,v_N$.  For any $\xi_i$ we then have that 
$$
\langle \xi_i,\Delta^E\xi_i\rangle=\sum_{j=1}^N \langle \xi_i,(v_jv_j^*-v_j)\xi_i\rangle+\langle \xi_i,(v_j^*v_j-v_j^*)\xi_i\rangle,
$$
which tends to zero in the limit over $i$.  As $\Delta^E$ is a positive operator on $l^2(X)$, this implies that its spectrum contains zero.
\end{proof}

In particular, note that whether or not $0$ is in the above variations of the spectrum of $\Delta^E$ is a property not of $E$, but of the coarse space $X$.  

We now turn to the relationship between geometric property (T) and amenability.  First a lemma.

\begin{ameniso}\label{ameniso}
Let $X$ be a coarsely connected amenable space. Then for any generating controlled set $E$, $0$ is a non-isolated point of the spectrum of $\sigma_{max}(\Delta^E)$.  
\end{ameniso}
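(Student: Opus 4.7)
The plan is to exploit the representation of $\C_u[X]$ on $l^2(X)$, in which (for $X$ infinite) the subspace of constant vectors vanishes. Amenability will then produce unit vectors in this representation for which $\langle\xi,\Delta^E\xi\rangle$ is arbitrarily small while the vectors themselves lie in $\ker(\Delta^E)^\perp$; this will force points of $\sigma_{l^2(X)}(\Delta^E)\subseteq \sigma_{max}(\Delta^E)$ in arbitrarily small punctured neighborhoods of $0$.

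First I would note that a coarsely connected amenable space $X$ here should be taken to be infinite (for a single point the Laplacian is $0$ on a one-dimensional space and $0$ is trivially isolated; in the application to Corollary \ref{infi} this is the only case of interest anyway). By Proposition \ref{conlem} together with Example \ref{conex}, the space $\h_c$ of constant vectors inside $l^2(X)$ is exactly the set of $l^2$-functions that are constant on each coarse component. Since $X$ has a single, infinite coarse component, this forces $\h_c=\{0\}$, and hence by Proposition \ref{conlem} the kernel of $\Delta^E$ on $l^2(X)$ is trivial.

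Next I would invoke Lemma \ref{amenlem}(2) to produce a net $(\xi_i)$ of unit vectors in $l^2(X)$ with $\|v\xi_i-vv^*\xi_i\|\to 0$ for every partial translation $v$. Using Lemmas \ref{poslem} and \ref{laplem} I decompose
$$
\Delta^E=\sum_{j=1}^N \bigl(v_jv_j^*+v_j^*v_j-v_j-v_j^*\bigr)
$$
for finitely many partial translation operators $v_1,\dots,v_N$ supported in $E$; writing each summand as $(v_jv_j^*-v_j)^*(v_jv_j^*-v_j)$ and pairing with $\xi_i$ gives $\langle\xi_i,\Delta^E\xi_i\rangle\to 0$.

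Finally I would conclude by contradiction: if $0$ were isolated in $\sigma_{max}(\Delta^E)$, there would exist $c>0$ with $\sigma_{max}(\Delta^E)\subseteq\{0\}\cup[c,\infty)$, and in particular $\sigma_{l^2(X)}(\Delta^E)\subseteq\{0\}\cup[c,\infty)$. Because $\Delta^E$ is a positive operator on $l^2(X)$ with trivial kernel, the spectral theorem would then give $\Delta^E\geq c\cdot I$ on $l^2(X)$, so $\langle\xi_i,\Delta^E\xi_i\rangle\geq c$ for all $i$, contradicting the limit just established. The main obstacle, such as it is, lies in recognizing that the right representation to test against is the concrete one on $l^2(X)$ rather than an abstract representation: only in $l^2(X)$ of an infinite coarsely connected space does the vanishing $\ker(\Delta^E)=\{0\}$ hold automatically, which is what converts an ``almost invariant'' net into genuine witnesses that $0$ is an accumulation point of the spectrum, not merely a point of it.
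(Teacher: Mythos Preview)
Your proof is correct and follows essentially the same strategy as the paper: work in $l^2(X)$, use amenability to push $\langle\xi_i,\Delta^E\xi_i\rangle$ to $0$, and use that $\ker(\Delta^E)=\{0\}$ there to derive a contradiction with isolation. The paper streamlines this slightly by citing Proposition~\ref{amen} rather than redoing the almost-invariant-vector computation, and for the triviality of the kernel it gives an explicit shift argument (a sequence $x_0,x_1,\ldots$ of distinct points with $(x_i,x_{i+1})\in E$, together with the partial translation $x_n\mapsto x_{n-1}$, forcing all values $\xi(x_n)$ to coincide). One small citation issue: Example~\ref{conex} is stated only for disjoint unions of \emph{finite} graphs, so it does not literally apply to a single infinite coarsely connected $X$; the fact you actually need (constant vectors in $l^2(X)$ are constant on coarse components) follows directly from the definition by taking the singleton partial translation $\{y\}\to\{x\}$ for any controlled pair $(x,y)$.
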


\begin{proof}
Using Proposition \ref{amen}, $0$ is also in $\sigma_{l^2(X)}(\Delta^E)$; it suffices to show $0$ is not an isolated point in $\sigma_{l^2(X)}(\Delta^E)$.  If it were, then $0$ would be an eigenvector of $\Delta^E$ for its action on $l^2(X)$.  Let $\xi$ be an eigenvector, and note that Proposition \ref{conlem} implies that $\xi$ is a fixed vector.  Let $x_0$ be a point in the support of $\xi$, and note that as $X$ is infinite, coarsely connected and monogenic, there exists a sequence
$$
x_0,x_1,x_2,...
$$
of distinct points in $X$ such that $(x_i,x_{i+1})$ is in $E$ for all $i$.  Let $v$ be the partial translation operator corresponding to the partial translation defined by
$$
t:\{x_n~|~n\geq 1\}\to \{x_n~|~n\geq 0\},~~~x_n\mapsto x_{n-1}.
$$
Then as $\xi$ is a fixed vector we have that
$$
(v\xi)(x_n)=(vv^*\xi)(x_{n-1})
$$
for all $n$, and thus by induction that all the values $\xi(x_n)$ are non-zero and equal.  This contradicts that $\xi$ is in $l^2(X)$.
\end{proof}

Finally, here is the characterisation of coarsely connected spaces with geometric property (T).  

\begin{infi}\label{infi}
Let $X$ be an infinite coarsely connected space.  Then $X$ has geometric property (T) if and only if it is not amenable.
\end{infi}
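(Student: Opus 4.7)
The plan is to deduce this corollary as a near-immediate consequence of three earlier results: the Laplacian characterization of geometric property (T) in Proposition \ref{tlap}, the amenability characterization in Proposition \ref{amen}, and the non-isolation statement in Lemma \ref{ameniso}. Since $X$ is monogenic, I would fix once and for all a generating controlled set $E$ and run the entire argument through the single Laplacian $\Delta^E \in \C_u[X]$. The key structural observation, already exploited in the proof of Proposition \ref{amen}, is that $\sigma_{max}(\Delta^E)$ coincides with the genuine spectrum of $\Delta^E$ inside the $C^*$-algebra $C^*_{u,max}(X)$; in particular $\sigma_{max}(\Delta^E)$ is closed, and Lemma \ref{laplem} places it in $[0,\infty)$.

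For the implication ``$X$ not amenable $\Rightarrow$ $X$ has geometric property (T)'', Proposition \ref{amen} directly gives $0 \notin \sigma_{max}(\Delta^E)$. Combined with closedness of $\sigma_{max}(\Delta^E)$ as a subset of $[0,\infty)$, this upgrades to a genuine spectral gap $\sigma_{max}(\Delta^E) \subseteq [c,\infty)$ for some $c > 0$. Condition (3) of Proposition \ref{tlap} is then immediately met with this particular $E$, so $X$ has geometric property (T).

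For the converse I would argue by contrapositive and assume $X$ is amenable. Lemma \ref{ameniso}, whose hypotheses are exactly coarse connectedness together with infiniteness (used to build an infinite $E$-chain of distinct points disproving $\ell^2$-membership of a fixed vector), asserts that $0$ is a non-isolated point of $\sigma_{max}(\Delta^E)$. Consequently no set of the form $\{0\} \cup [c,\infty)$ with $c > 0$ can contain $\sigma_{max}(\Delta^E)$, so condition (2) of Proposition \ref{tlap} fails for this generating set $E$ and $X$ does not have geometric property (T).

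There is essentially no serious obstacle here: all substantive work has been packaged into the three cited results. The one point that requires care is the identification of $\sigma_{max}$ with an honest $C^*$-algebraic spectrum, which is what converts the purely set-theoretic statement $0 \notin \sigma_{max}(\Delta^E)$ into the quantitative gap $\sigma_{max}(\Delta^E) \subseteq [c,\infty)$; without closedness this implication would be illegitimate. The infiniteness and coarse-connectedness hypotheses enter the argument only through Lemma \ref{ameniso}, where they preclude $0$ from being an isolated eigenvalue of $\Delta^E$ on $\ell^2(X)$.
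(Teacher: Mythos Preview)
Your proposal is correct and follows essentially the same route as the paper's own proof: one direction via Proposition \ref{amen} and closedness of $\sigma_{max}(\Delta^E)\subseteq[0,\infty)$, the other via Lemma \ref{ameniso}, with Proposition \ref{tlap} translating the spectral statements into geometric property (T). The only difference is that you are more explicit about invoking conditions (2) and (3) of Proposition \ref{tlap} and about why $\sigma_{max}$ is closed, whereas the paper leaves these implicit.
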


The special case of this result when $X$ is a group was proved in \cite[Lemma 7.2]{Willett:2010zh}.

\begin{proof}
If $X$ is amenable then by Lemma \ref{ameniso}, $0$ is a non-isolated point in $\sigma_{max}(\Delta^E)$ for any generating set $E$ for the coarse structure, and this contradicts geometric (T).  Conversely, if $X$ is not amenable, then $0$ is not in the spectrum of $\sigma_{max}(\Delta^E)$ for any generating set $E$ by Proposition \ref{amen}.  As the spectrum is closed and $\Delta^E$ is positive, it is contained in a set of the form $[c,\infty)$ for some $c>0$, and this in particular implies geometric property (T).
\end{proof}

This says that geometric property (T) is not very interesting for coarsely connected spaces!  In the next section, we will finally look at a class of interesting examples of spaces with geometric property (T).

\section{Relationship with property (T) groups}\label{exsec}

In this section we give some non-trivial examples of spaces with geometric property (T).  Up to trivial adjustments, these are the only examples we know.  Most of this material is contained in \cite[Section 7]{Willett:2010zh}, but fairly sketchily; we provide more detail here for the readers' convenience.  
 
It will be very convenient to use some $C^*$-algebraic machinery in this section, mainly as the following $C^*$-algebras are useful  to organize certain arguments.  This material was already briefly used in the proof of Proposition \ref{amen}.

\begin{spectra}\label{spectra}
Let $X$ be a space.  The \emph{uniform Roe algebra} if $X$, denoted $C^*_u(X)$, is the completion of $\C_u[X]$ for its natural $*$-representation on $l^2(X)$.  

The \emph{maximal uniform Roe algebra} of $X$, denoted $C^*_{u,max}(X)$, is the completion of $\C_u[X]$ for the norm
$$
\|T\|:=\sup\{\|\pi(T)\|_{\mathcal{B}(\mathcal{H})}~|~\pi:\C_u[X]\to \mathcal{B}(\mathcal{H}) \text{ a $*$-representation}\}
$$ 
(see \cite[Section 3]{Gong:2008ja} for a proof that this norm is finite).
\end{spectra}
Using the notation from Definition \ref{spectra0}, note that for any $T\in \C_u[X]$, $\sigma_{l^2(X)}(T)$ is the spectrum of $T$ considered as an element of $C^*_u(X)$, and $\sigma_{max}(T)$ is the spectrum of $T$ considered as an element of $C^*_{u,max}(X)$.

\begin{bsdef}\label{bsdef}
Let $\Gamma$ be an infinite finitely generated discrete group with a fixed finite generating set $S$.   Assume that $S=S^{-1}$.  
Let
$$
\Gamma=\Gamma_0\trianglerighteq \Gamma_1\trianglerighteq \Gamma_2\trianglerighteq\cdots 
$$
be a nested sequence of finite index normal subgroups of $\Gamma$ such that $\cap_n \Gamma_n=\{e\}$.  For each $n$, set $X_n=\Gamma/\Gamma_n$, and set 
$$
X=\sqcup_{n\in \N}X_n.
$$
Set 
$$
E_n=\{(x,y)\in X_n\times X_n~|~x^{-1}y\in S\}
$$
and
$$
E=\sqcup E_n\subseteq X\times X.
$$
Finally, equip $X$ with the monogenic coarse structure generated by $E$.  %$X$ is called the \emph{box space} associated to the data $(\Gamma,(\Gamma_n))$.
\end{bsdef}

The following theorem characterises when a space built from a group as above has geometric property (T).

\begin{tgeot}\label{tgeot}
Let $X$ be a space built from data  $(\Gamma,(\Gamma_n))$ as above.  Then $X$ has geometric property (T) if and only if $\Gamma$ has property (T).
\end{tgeot}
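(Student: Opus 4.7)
The natural strategy is to characterise both sides of the equivalence as spectral gap conditions for suitable Laplacian operators and then identify the two spectra. By Proposition \ref{tlap}, $X$ has geometric property (T) if and only if $\sigma_{max}(\Delta^E) \subseteq \{0\} \cup [c,\infty)$ for some $c>0$; by Valette's theorem (\cite[Theorem~3.2]{Valette:1984wy}), $\Gamma$ has property (T) if and only if the same statement holds for the spectrum of the group Laplacian $\Delta_\Gamma := |S| - \sum_{g \in S} g$ in the full group $C^*$-algebra $C^*(\Gamma)$. So the plan is to establish the equality
\[
\sigma_{max}(\Delta^E) \;=\; \sigma_{C^*(\Gamma)}(\Delta_\Gamma),
\]
from which the theorem follows at once.

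First I set up the algebraic link between $\C_u[X]$ and $\C[\Gamma]$. For each $g \in \Gamma$, right multiplication by $g^{-1}$ is a well-defined bijection of each $X_n = \Gamma/\Gamma_n$ (using normality of $\Gamma_n$) whose graph lies in $E^{\circ |g|_S}$, so the corresponding partial translation operator $u_g \in \C_u[X]$ is a unitary. The map $g \mapsto u_g$ is an injective group homomorphism --- injectivity uses $\bigcap_n \Gamma_n = \{e\}$ to separate distinct $g,h \in \Gamma$ by restricting to some $X_n$ with $gh^{-1} \notin \Gamma_n$ --- and extends to an injective unital $*$-homomorphism $\phi: \C[\Gamma] \hookrightarrow \C_u[X]$. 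A direct matrix coefficient computation, valid once $n$ is large enough that the elements of $S$ remain distinct and non-trivial modulo $\Gamma_n$, yields
\[
\Delta^E \;=\; |S| - \sum_{g \in S} u_g \;=\; \phi(\Delta_\Gamma).
\]
The finitely many small $n$ for which this formula fails contribute a finite subspace; by Corollary \ref{fin} this piece has geometric property (T) trivially, and the block decomposition of $\C_u[X]$ along the two pieces lets us reduce to the large-$n$ case.

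The easy inclusion $\sigma_{max}(\Delta^E) \subseteq \sigma_{C^*(\Gamma)}(\Delta_\Gamma)$ is then essentially formal: for any representation $\pi: \C_u[X] \to \mathcal{B}(\mathcal{H})$, the composite $\pi \circ \phi$ is a unitary representation of $\Gamma$, hence extends to a $*$-homomorphism $C^*(\Gamma) \to \mathcal{B}(\mathcal{H})$. Since $\pi(\Delta^E) = (\pi \circ \phi)(\Delta_\Gamma)$ and $*$-homomorphisms can only shrink spectra, $\sigma(\pi(\Delta^E)) \subseteq \sigma_{C^*(\Gamma)}(\Delta_\Gamma)$; taking the union over $\pi$ gives the claim. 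This already proves that property (T) for $\Gamma$ implies geometric property (T) for $X$.

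The reverse inclusion is the heart of the argument and the step I expect to be hardest. The goal is to show that every unitary representation $\rho: \Gamma \to U(\mathcal{K})$ is realised in some representation of $\C_u[X]$, in the sense that $\rho(\Delta_\Gamma)$ appears (up to unitary conjugacy) as the restriction of $\pi_\rho(\Delta^E)$ to some invariant subspace, giving $\sigma(\rho(\Delta_\Gamma)) \subseteq \sigma_{max}(\Delta^E)$. The naive formula $(\pi_\rho(u_g)\xi)(x) := \rho(g) \xi(xg)$ on $l^2(X, \mathcal{K})$ does not in general give a $*$-representation of $\C_u[X]$: the relation $u_g \chi_{X_n} = \chi_{X_n}$, which holds in $\C_u[X]$ for every $g \in \Gamma_n$, is preserved by the formula only when $\rho|_{\Gamma_n}$ is trivial, so the construction only works when $\rho$ factors through some quotient $\Gamma/\Gamma_n$. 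The main obstacle is therefore to accommodate representations that do not factor through any finite quotient. I would handle this by an ultraproduct/diagonal argument: for each $n$, build an approximate covariant pair $\pi_\rho^{(n)}$ on $l^2(X_n, \mathcal{K})$ whose failure to preserve the problematic relations, when evaluated on a unit vector $\xi$, is controlled by how badly $\xi$ fails to be $\rho|_{\Gamma_n}$-invariant; then pass to an ultraproduct $\prod_\mathcal{U} l^2(X_{n_k}, \mathcal{K})$ along a sequence of almost-$\Gamma$-invariant vectors for $\rho$, in which the errors vanish and an honest representation $\pi_\rho$ of $\C_u[X]$ emerges. The `$\chi_{X_n}$-constants valued in $\mathcal{K}$' subspaces then assemble in the ultraproduct into a $\pi_\rho(\Delta^E)$-invariant subspace on which the restriction is unitarily equivalent to $\rho(\Delta_\Gamma)$, completing the proof of the reverse inclusion.
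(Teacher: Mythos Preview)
Your overall strategy---reducing both sides to spectral gap conditions and then proving $\sigma_{max}(\Delta^E) = \sigma_{C^*(\Gamma)}(\Delta_\Gamma)$---matches the paper's exactly, and your argument for the inclusion $\sigma_{max}(\Delta^E) \subseteq \sigma_{C^*(\Gamma)}(\Delta_\Gamma)$ is correct.

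The gap is in the reverse inclusion. Your ultraproduct sketch requires, for an arbitrary unitary representation $\rho$ of $\Gamma$ on $\mathcal{K}$, that the defect of the formula $(\pi_\rho^{(n)}(u_g)\xi)(x) = \rho(g)\xi(xg)$ in satisfying the relations $u_g\chi_{X_n} = \chi_{X_n}$ (for $g \in \Gamma_n$) can be made to vanish asymptotically. But each $\Gamma_n$ is an infinite finite-index subgroup; the hypothesis $\bigcap_n \Gamma_n = \{e\}$ only says that any \emph{fixed} nontrivial $g$ eventually leaves $\Gamma_n$, not that $\rho|_{\Gamma_n}$ becomes nearly trivial on any vector. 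For instance, if $\rho$ is the left regular representation then no nonzero $\xi \in \mathcal{K}$ is even approximately $\rho(\Gamma_n)$-invariant for any $n$, so the ``errors'' you want to kill are bounded away from zero uniformly in $n$, and no ultraproduct will make them disappear. The phrase ``along a sequence of almost-$\Gamma$-invariant vectors for $\rho$'' presupposes exactly the structure that a general $\rho$ need not have. I do not see how to repair the construction along these lines.

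The paper sidesteps this entirely. Rather than building a representation of $\C_u[X]$ for each $\rho$, it proves once and for all that the map $\iota: C^*_{max}(\Gamma) \to C^*_{u,max}(X)$ is injective (Lemma~\ref{maxgp}); since injective $*$-homomorphisms between $C^*$-algebras preserve spectra, the equality $\sigma_{max}(\Delta^E)=\sigma_{C^*_{max}(\Gamma)}(\Delta_\Gamma)$ is then immediate. The injectivity is obtained by passing to the quotient of $C^*_{u,max}(X)$ by the ideal generated by the finite-rank operators, which identifies with the maximal crossed product $(l^\infty(X)/C_0(X)) \rtimes_{max} \Gamma$, and then observing that the normalized counting measures on the $X_n$ cluster to a $\Gamma$-invariant state on $l^\infty(X)/C_0(X)$. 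Functoriality of the maximal crossed product under equivariant ucp maps turns this state into a ucp splitting of $C^*_{max}(\Gamma) \to (l^\infty(X)/C_0(X)) \rtimes_{max} \Gamma$, so that map---and hence $\iota$---is injective. The invariant state is precisely the device that lets one ``see'' every representation of $\Gamma$ inside the representation theory of $\C_u[X]$ without ever having to write one down explicitly.
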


In order to prove this, we need a lemma.  Let $\C[\Gamma]$ denote the complex group algebra of $\Gamma$, and note that the right actions of $\Gamma$ on the various $X_n$ give rise to a $*$-homomorphism
\begin{equation}\label{groupin}
\iota: \C[\Gamma]\to \C_u[X].
\end{equation}
This $*$-homomorphism is injective as $\cap_n\Gamma_n=\{e\}$.  Moreover, if $C^*_{max}(\Gamma)$ denotes the completion of $\C[\Gamma]$ for the norm
$$
\Big\|\sum_{g\in \Gamma}z_gg\Big\|:=\sup\Big\{\Big\|\Big(\sum_{g\in \Gamma}z_g\pi(g)\Big)\Big\|_\h~|~\pi:\Gamma\to \h \text{ a unitary representation}\Big\},
$$
then $\iota$ also induces a $*$-homomorphism $\iota:C^*_{max}(\Gamma)\to C^*_{u,max}(X)$ by the universal property of $C^*_{max}(\Gamma)$.  We have the following injectivity result, which is stronger than the statement that the map in line \eqref{tgeot} is injective.

\begin{maxgp}\label{maxgp}
The $*$-homomorphism $\iota:C^*_{max}(\Gamma)\to C^*_{u,max}(X)$ is injective.
\end{maxgp}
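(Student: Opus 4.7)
The plan is: for every unitary representation $\pi:\Gamma\to U(\h_\pi)$ I will build a $*$-representation of $\C_u[X]$ whose restriction to $\iota(\C[\Gamma])$ has operator norm at least $\|\pi(\cdot)\|$. Combined with the automatic contraction $\|\iota(a)\|_{C^*_{u,max}(X)}\leq\|a\|_{C^*_{max}(\Gamma)}$ from the universal property of $C^*_{max}(\Gamma)$, this shows $\iota$ is isometric, hence injective. The representation will be produced as a Hilbert-space ultraproduct of naive finite-dimensional covariant representations.

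First I would identify $\C_u[X]$ as a quotient of an algebraic crossed product. The right action of $\Gamma$ on $X=\sqcup_n\Gamma/\Gamma_n$ induces an action $\alpha$ of $\Gamma$ on $l^\infty(X)$, and there is a natural surjective $*$-homomorphism $l^\infty(X)\rtimes_{\text{alg}}\Gamma\twoheadrightarrow\C_u[X]$ sending $fu_g\mapsto M_f\iota(g)$ (surjectivity uses that any controlled set is covered by finitely many graphs of translations by elements of $S^{\leq k}$). Since $\gamma\in\Gamma_n$ acts trivially on $X_n$, the elements $\chi_{X_n}(u_\gamma-1)$ lie in the kernel, and a routine check shows they generate the kernel as a two-sided ideal $I$. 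For each $n$ I then define $\rho_n:l^\infty(X)\rtimes_{\text{alg}}\Gamma\to\mathcal{B}(\h_\pi\otimes l^2(X_n))$ by $\rho_n(f)=1\otimes M_{f|_{X_n}}$ and $\rho_n(u_g)=\pi(g)\otimes\lambda_n(g)$, where $\lambda_n$ is the quasi-regular representation of $\Gamma$ on $l^2(X_n)$; covariance is immediate.

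These $\rho_n$ do not individually factor through $\C_u[X]$ because $\rho_n(\chi_{X_n}(u_\gamma-1))=(\pi(\gamma)-1)\otimes 1$ is generally nonzero. The key observation, however, is that $\rho_n(\chi_{X_m}(u_\gamma-1))=0$ for every $n\neq m$, since $\chi_{X_m}|_{X_n}=0$. Fixing a free ultrafilter $\omega$ on $\N$ and forming the Hilbert-space ultraproduct $\mathcal{K}_\omega=\prod_\omega(\h_\pi\otimes l^2(X_n))$, for each $T\in l^\infty(X)\rtimes_{\text{alg}}\Gamma$ the sequence $(\rho_n(T))$ is uniformly bounded in $n$, so $\rho_\omega(T):=\prod_\omega\rho_n(T)$ defines a bounded operator on $\mathcal{K}_\omega$ and assembles into a $*$-homomorphism $\rho_\omega$. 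Since every generator of $I$ is annihilated by $\rho_n$ on an $\omega$-cofinite set of indices, $\rho_\omega$ kills $I$ and descends to $\tilde\rho_\omega:\C_u[X]\to\mathcal{B}(\mathcal{K}_\omega)$, which extends to $C^*_{u,max}(X)$ by universality.

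Finally, for $a\in\C[\Gamma]$ the operator $\tilde\rho_\omega(\iota(a))$ is the ultraproduct of $(\pi\otimes\lambda_n)(a)$. The normalized constant function $|X_n|^{-1/2}\mathbf{1}_{X_n}$ is $\Gamma$-invariant in $l^2(X_n)$, so $v\mapsto v\otimes|X_n|^{-1/2}\mathbf{1}_{X_n}$ embeds $\pi$ isometrically as a sub-representation of $\pi\otimes\lambda_n$, whence $\|(\pi\otimes\lambda_n)(a)\|\geq\|\pi(a)\|$ for every $n$ and so $\|\iota(a)\|_{C^*_{u,max}(X)}\geq\|\pi(a)\|$. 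Taking the supremum over $\pi$ completes the proof. The main obstacle is the descent step: the residual finiteness hypothesis $\cap_n\Gamma_n=\{e\}$ is exactly what guarantees each obstruction $\chi_{X_m}(u_\gamma-1)$ is supported on a single index $m$ in the sequence $\{\rho_n\}$, and therefore dies in any free ultrafilter limit.
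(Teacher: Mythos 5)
Your argument is correct, and it takes a genuinely different route from the paper. The paper works top--down: it passes to the quotient of $C^*_{u,max}(X)$ by the ideal generated by $\oplus_n\C_u[X_n]$, cites the identification of that quotient with $\big(l^\infty(X)/C_0(X)\big)\rtimes_{max}\Gamma$, and then splits $C^*_{max}(\Gamma)$ off the crossed product using the $\Gamma$-invariant mean $\phi$ (a cluster point of the normalized counting measures on the $X_n$) together with functoriality of $\rtimes_{max}$ under ucp maps. You work bottom--up: you present $\C_u[X]$ as a quotient of the algebraic crossed product $l^\infty(X)\rtimes_{\mathrm{alg}}\Gamma$, write down the covariant representations $\pi\otimes\lambda_n$ on $\h_\pi\otimes l^2(X_n)$ by hand, and use a free ultrafilter to kill the finitely-supported relations. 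Your invariant unit vector $|X_n|^{-1/2}\mathbf{1}_{X_n}$ plays exactly the role of the paper's invariant mean, and your ultrafilter plays the role of the paper's cluster point of the states $\phi_n$. What your version buys is self-containedness (no appeal to the structure theorem for $C^*_{u,max}(X)/I$ or to ucp-functoriality of maximal crossed products) and the slightly stronger conclusion that $\iota$ is isometric; what it costs is that you must actually prove the kernel-generation claim you call routine. That claim is where the hypotheses really enter, and your closing sentence misplaces them slightly: the fact that each generator $\chi_{X_m}(u_\gamma-1)$ is supported on a single block is automatic and needs nothing, whereas showing that these elements generate the whole kernel requires both the nesting $\Gamma_n\supseteq\Gamma_{n+1}$ and $\cap_n\Gamma_n=\{e\}$ --- for a finite set $F\subseteq\Gamma$ these guarantee an $N$ beyond which distinct elements of $F$ lie in distinct $\Gamma_n$-cosets, so a kernel element $\sum_{g\in F}f_gu_g$ has all $f_g$ supported on $X_1\cup\cdots\cup X_{N-1}$ and the coset-grouping argument terminates after finitely many blocks. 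With that step written out, the proof is complete.
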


\begin{proof}
Note that the algebraic direct sum $\oplus_n \C_u[X_n]$ is an ideal in $\C_u[X]$; let $I$ denote its closure in $C^*_{u,max}(X)$.   It follows from the argument of \cite[Proposition 2.8]{Oyono-Oyono:2009ua} that 
$$
C^*_{u,max}(X)/I\cong \big( l^\infty(X)/C_0(X)\big)\rtimes_{max}\Gamma,
$$
where the right-hand-side denotes the maximal crossed product defined using the action of $\Gamma$ on $l^\infty(X)/C_0(X)$ induced by the right action on $X$.  It suffices to prove that the composed map
$$
C^*_{max}(\Gamma)\to C^*_{u,max}(X)\to \big(l^\infty(X)/C_0(X)\big)\rtimes_{max}\Gamma
$$
is an injection.

Now, for each $n$, let $\xi_n$ be the normalised characteristic function of $X_n$ in $l^2(X)$, and let 
$$
\phi_n:l^\infty(X)\to \C,~~~f\mapsto \langle \xi_n,f\xi_n\rangle
$$
be the corresponding vector state.  Let $\phi$ be any cluster point of the sequence $(\phi_n)$ of vector states on $l^\infty(X)$, and note that $\phi$ descends to a state on $l^\infty(X)/C_0(X)$.  It is $\Gamma$-invariant, as all the $\phi_n$ are.  Finally, consider the maps
$$
\C\stackrel{1}{\to} l^\infty(X)/C_0(X)\stackrel{\phi}{\to}\C,
$$
where the first map is the unit inclusion which is split by the ucp map $\phi$.  As maximal crossed products are functorial for ucp maps (see e.g.\ \cite[Exercise 4.1.4]{Brown:2008qy}), this gives rise to maps
$$
C^*_{max}(\Gamma)\to \big(l^\infty(X)/C_0(X)\big)\rtimes_{max}\Gamma\to C^*_{max}(\Gamma)
$$
whose composition is the identity; the first map is thus injective.
\end{proof}

\begin{injrem}\label{injrem}
The above proof is a disguised version of the following fact: we guess this is known, but do not know if it appears in the literature.  Let $G$ be a locally compact group acting on a compact Hausdorff topological space $X$.  Then the canonical $*$-homomorphism
$$
C^*_{max}(G)\to C(X)\rtimes_{max}G
$$
is injective if and only if there is an invariant measure on $X$.
\end{injrem}

\begin{proof}[Proof of Theorem \ref{tgeot}]
Consider the element 
$$
\Delta_\Gamma:=\sum_{s\in S}1-[s]\in \C[\Gamma]
$$
of the group algebra of $\Gamma$, and let $E$ be the controlled set appearing in the definition of a box space (Definition \ref{bsdef}).  Then the image of $\Delta_\Gamma$ under $\iota$ is the Laplacian $\Delta^E$ associated to $E$.  As $\iota$ is injective on the level of maximal completions (and injective maps of $C^*$-algebras preserve spectra) it follows that the spectrum of $\Delta_\Gamma$ in $C^*_{max}(\Gamma)$ is equal to  $\sigma_{max}(\Delta^E)$.  However, it is well known that $\Gamma$ has property (T) if and only if the spectrum of $\Delta_\Gamma$ in $C^*_{max}(\Gamma)$ is contained in a set of the form $\{0\}\sqcup [c,\infty)$ for some $c>0$ (see for example \cite[Theorem 3.2]{Valette:1984wy}); as $E$ is generating (and using Proposition \ref{nclem}), this is equivalent to geometric property (T) for $X$. 
\end{proof}

\begin{expgt}\label{expgt}
For spaces built from sequences of quotients as above, having geometric property (T) is a strictly stronger property that being an expander.
\end{expgt}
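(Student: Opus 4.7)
The plan is to combine Theorem \ref{tgeot} with a classical non-Kazhdan expander construction. By Theorem \ref{tgeot}, a box space of $\Gamma$ has geometric property (T) if and only if $\Gamma$ has property (T); by Corollary \ref{fin2}, geometric property (T) for such a box space forces the sequence $(\Gamma/\Gamma_n)$ to be an expander. Thus, to show strictness, it suffices to exhibit a finitely generated group $\Gamma$ \emph{without} property (T) admitting a nested sequence of finite-index normal subgroups $(\Gamma_n)$ with $\cap_n \Gamma_n = \{e\}$, such that $(\Gamma/\Gamma_n)$ (with respect to some fixed generating set of $\Gamma$) is an expander.

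The canonical example I would use is $\Gamma = SL(2,\Z)$ equipped with its standard generating set, and $\Gamma_n := \ker(SL(2,\Z) \to SL(2,\Z/n\Z))$, the principal congruence subgroups. These are normal of finite index, and any integer matrix congruent to the identity modulo every $n$ is the identity, so $\cap_n \Gamma_n = \{e\}$. The group $SL(2,\Z)$ does not have property (T) since it is virtually free (equivalently, it has the Haagerup property), which is incompatible with property (T) for an infinite group.

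The key ingredient, and essentially the only nontrivial input, is that the quotients $(SL(2,\Z/n\Z))$ with respect to the images of a fixed generating set of $SL(2,\Z)$ form an expander family. This is a classical consequence of Selberg's $3/16$ theorem, which provides a uniform spectral gap for the Laplacian on congruence covers of the modular surface; this spectral gap transfers to a uniform combinatorial spectral gap on the Cayley graphs $\Gamma/\Gamma_n$ by a now-standard argument (see, e.g., Lubotzky's book \cite{Lubotzky:1994tw}). Alternatively, one could invoke the Lubotzky--Phillips--Sarnak Ramanujan graph construction, or Margulis's original construction starting from a non-(T) group.

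Having cited the expander property, the proof is then a one-line assembly: by Theorem \ref{tgeot} the box space fails geometric property (T) (since $SL(2,\Z)$ is not Kazhdan), yet the same space is an expander by the spectral gap result. The only ``hard'' step is the Selberg-type spectral gap input; all other steps are purely formal consequences of the machinery developed in the paper.
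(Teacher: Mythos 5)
Your argument is correct and follows essentially the same route as the paper: both reduce to Theorem \ref{tgeot} plus the existence of a non-Kazhdan group whose finite quotients form an expander (the paper cites this as property $(\tau)$ without (T), e.g.\ for a free group, while you make the standard example explicit via $SL(2,\Z)$ and Selberg's $3/16$ theorem). One small repair: the full family of principal congruence subgroups $\Gamma(n)=\ker(SL(2,\Z)\to SL(2,\Z/n\Z))$ is not nested, since $\Gamma(m)\supseteq\Gamma(n)$ only when $m\mid n$; Definition \ref{bsdef} requires a nested chain, so you should pass to a cofinal nested subfamily such as $\Gamma_n=\Gamma(2^n)$ or $\Gamma(n!)$, which still has trivial intersection and still gives an expander by the same spectral gap input.
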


\begin{proof}
A space associated to the pair $(\Gamma,(\Gamma_n))$ is an expander if and only if the pair has property $(\tau)$: see for example \cite[Theorem 4.3.2]{Lubotzky:1994tw}.  The result now follows as there are many pairs $(\Gamma, (\Gamma_n))$, for example with $\Gamma$ a free group, which have property $(\tau)$ where $\Gamma$ does not have property (T).  
\end{proof}

Note also that whether a space as above has property (T) depends only on the ambient group $\Gamma$, not the given sequence of subgroups; on the other hand, whether or not such a space is an expander in general does depend on the choice of sequence of subgroups.

\section{Geometric property (T) and coarse\\ a-T-menability properties}\label{atmensec}
 
It follows from Lemma \ref{fin} and Corollary \ref{infi} that geometric property (T) is only really interesting when a space $X$ admits a decomposition
$$
X=\sqcup_{n\in\N}X_n
$$
into non-empty finite coarse components such that $|X_n|$ tends to infinity.  We assume for simplicity\footnote{Slightly more general results, for example allowing index sets other than $\N$, or only assuming that $|X_n|$ is unbounded, are certainly possible but we did not think that the extra messiness this would force on the statements is worth it.} throughout this section that we are dealing with a space of this form.  

Given such a space, we define its \emph{box space}  $\Box X$ to be the set $X$ equipped with the coarse structure generated by the original coarse structure on $X$, and all the singletons $\{(x,y)\}$ as $x$ and $y$ vary across $X$ (note that this coarse structure is never monogenic). 

Our goal in this section is to show that geometric property (T) is incompatible with the following notions of `coarse a-T-menability' for $X$: $X$ admits a coarse embedding into Hilbert space \cite{Yu:200ve}; $X$ admits a fibered coarse embedding into Hilbert space \cite{Chen:2012uq}; the restriction of the coarse groupoid of $X$ to its boundary is a-T-menable \cite{Finn-Sell:2012fk}.  

\begin{atthe}\label{atthe}
Assume that $X=\sqcup X_n$ splits into finite coarse components such that $|X_n|$ tends to infinity as above, and $X$ has geometric property (T).  Then the following are impossible:
\begin{enumerate}
\item $\Box X$ admits a coarse embedding into Hilbert space;
\item $\Box X$ admits a fibered coarse embedding into Hilbert space;
\item the restriction of the coarse groupoid of $X$ (or $\Box X$) to its boundary is a-T-menable.
\end{enumerate}
\end{atthe}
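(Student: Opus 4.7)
The plan is to deduce all three impossibilities from a single core incompatibility: geometric property (T) for $X$ rules out a-T-menability of the restriction of the coarse groupoid $G(X)$ to the Stone-\v{C}ech boundary $\partial\beta X$. The two reductions are straightforward. Condition (1) is the special case of (2) in which the Hilbert bundle is trivial, so (1) implies (2) directly from the definitions. For (2) implies (3), I would invoke the main theorem of \cite{Finn-Sell:2012fk}, which shows that a fibered coarse embedding of $\Box X$ into Hilbert space forces the Haagerup (a-T-menability) property for $G(X)|_{\partial\beta X}$.

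For the core argument, I would use the Laplacian reformulation of geometric property (T) from Proposition \ref{tlap}: fixing a symmetric generating controlled set $E$, there is $c > 0$ with $\sigma_{max}(\Delta^E) \subseteq \{0\} \cup [c,\infty)$. The maximal uniform Roe algebra $C^*_{u,max}(X)$ admits a canonical quotient onto the maximal groupoid $C^*$-algebra $C^*_{max}(G(X)|_{\partial\beta X})$, in the spirit of the argument used in the proof of Lemma \ref{maxgp}; under this quotient $\Delta^E$ is sent to the analogous Laplacian of the boundary groupoid. Combining Remark \ref{wm} with the boundary version of Proposition \ref{tlap}, the image of $\Delta^E$ still has spectrum contained in $\{0\} \cup [c,\infty)$ in any boundary representation, with $0$ isolated.

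The contradiction is then obtained from a-T-menability. a-T-menability of $G(X)|_{\partial\beta X}$ provides a proper continuous negative-type function $\ell$ on this groupoid; by Schoenberg's theorem each $e^{-t\ell}$, $t > 0$, is positive-type, converging to the constant function $1$ uniformly on compacta of the unit space as $t \to 0^+$ while vanishing at infinity in each source fiber. A GNS-type construction yields for each small $t$ a boundary representation $\pi_t$ of $\C_u[X]$ on a Hilbert space $\h_t$ together with a unit vector $\xi_t$ whose vector state essentially integrates against $e^{-t\ell}$ along the groupoid. Writing $\Delta^E$ as a finite sum $\sum_i(v_iv_i^* + v_i^*v_i - v_i - v_i^*)$ over partial translations with graphs in $E$, the off-unit-space contributions to $\langle \xi_t, \pi_t(\Delta^E)\xi_t\rangle$ force this quantity to tend to $0$ as $t \to 0$. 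Properness of $\ell$ and the assumption $|X_n|\to\infty$ together prevent the $\xi_t$ from accumulating onto a genuine kernel vector of the boundary Laplacian, so $0$ becomes a non-isolated point of the boundary spectrum, contradicting the spectral gap.

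The principal obstacle will be making rigorous the passage between the groupoid a-T-menability data and concrete boundary representations of the translation algebra with controlled Laplacian action: constructing the vectors $\xi_t$ from $e^{-t\ell}$, quantitatively tracking $\langle \xi_t, \pi_t(\Delta^E)\xi_t\rangle \to 0$, and verifying that properness of $\ell$ really does obstruct the existence of a limit vector in the kernel of the image of $\Delta^E$. A subsidiary technical point is confirming that geometric property (T) yields the precise spectral gap needed on the boundary quotient; this should follow from Remark \ref{wm} but requires care in the present setting.
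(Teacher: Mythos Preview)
Your proposal is correct and follows essentially the same route as the paper: reduce (1) and (2) to (3) via the Finn-Sell results, then exploit Schoenberg on the boundary negative-type function $k$ to build, for each $t>0$, a representation $\pi_t$ of $\C_u[X]$ in which $\Delta^E$ has arbitrarily small strictly positive spectrum, contradicting the gap from Proposition~\ref{tlap}. The paper carries this out concretely rather than through a groupoid $C^*$-algebra quotient: it defines an $l^\infty(X)$-valued sesquilinear form $\langle\langle S,T\rangle\rangle_t(x)=\sum_{y,z}\overline{S_{xy}}T_{xz}k_t(y,z)$ on $\C_u[X]$, composes with a weak-$*$ cluster point $\phi$ of the normalized counting measures on the $X_n$ (this is where $|X_n|\to\infty$ enters, guaranteeing $\phi$ lives on $\partial X$), and takes the GNS completion $\mathcal{H}_t$; your vector $\xi_t$ is the class $[I]$ of the identity operator.

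The obstacle you correctly single out---ruling out genuine constant vectors in $\mathcal{H}_t$---is the technical heart, and the paper resolves it with an ingredient you do not anticipate: a combinatorial lemma, proved via Tutte's $2$-matching theorem, producing for each $r$ an $s$ and bijective partial translations $t_n:X_n\to X_n$ with graphs in $E^{\circ s}\setminus E^{\circ r}$ for all large $n$. The associated operator $v$ is unitary in $\mathcal{H}_t$, and properness of $k$ forces $|\langle [I],\pi_t(v)[I]\rangle_t|$ to be small, which is incompatible with $[I]$ being close to a constant vector. Your abstract packaging through the boundary groupoid would work, but this concrete matching argument is what actually closes the gap.
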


Natural examples satisfying conditions (2) and (3) are sequences of finite quotients of a-T-menable groups, and sequences of graphs $(X_n)$ such that the girth\footnote{i.e.\ length of shortest non-trivial cycle.} of $X_n$ tends to infinity: see \cite[Examples 2.4 and 2.5]{Chen:2012uq}.  It follows from the theorem that such spaces cannot have geometric property (T).  On the other hand, note that there are many expander sequences with girth tending to infinity; this gives another difference between geometric property (T) and general sequences of expanding graphs.

Special cases of this theorem follow from known results in $K$-theory \cite{Oyono-Oyono:2009ua,Willett:2010ud,Willett:2010zh,Chen:2012uq,Finn-Sell:2012fk,Finn-Sell:2013yq}, but the proof we give here is more direct and a little more general.  The theorem is definitely \emph{not} true for coarsely connected spaces: this follows from Corollary \ref{infi}.

The basic idea of the proof is to show that any of the coarse a-T-menability properties appearing in the statement allow one to construct $*$-representations of $\C_u[X]$ that contradict geometric property (T).  Unfortunately, properties (2) and (3) from the above theorem are quite technical, and are not stated anywhere in the literature in a form that is particularly well-suited for our purposes; as a result, in order to keep the proof of Theorem \ref{atthe} reasonably short and self-contained, we have had to be a little ad-hoc in some constructions below.  

In order to cover part (3) of the above theorem, we must use the language of the \emph{Stone-\v{C}ech compactification} of $X$; we thus start by recalling the relevant facts.  Let $Y$ be a discrete topological space.  The \emph{Stone-\v{C}ech compactification of $Y$}, denoted $\beta Y$ is a compact Hausdorff space containing $Y$ as a dense open subset.  It is determined by the following universal property: for any compact Hausdorff space $K$ and any function $f:Y\to K$, there is a unique continuous extension $f:\beta Y\to K$.  We write $\partial Y:=\beta Y\setminus Y$ for the associated \emph{Stone-\v{C}ech corona}.  Note that the universal property implies that for any $A\subseteq Y$, the inclusion map $A\to Y$ extends to an injection $\beta A\to \beta Y$; in particular, the closure of $A$ in $\beta Y$ is canonically identified with $\beta A$.

Let now $\mathcal{E}$ denote the coarse structure on $X\times X$.  For each controlled set $E\in\mathcal{E}$, let $\overline{E}$ denote its closure in $\beta X\times \beta X$.  Define also $\partial E$ to be the intersection $\overline{E}\cap (\partial X\times \partial X)$.  Define
$$
\beta_\mathcal{E}X:=\cup_{E\in \mathcal{E}}\overline{E}.
$$
We think of this space as a subset of $\beta X\times \beta X$ (and correspondingly write elements as pairs $(\omega_1,\omega_2)$), but equip it with the (`weak') topology determined by the following condition: a subset $U$ of $\beta_\mathcal{E}X$ is open if and only if its intersection with each $\overline{E}$ is open in $\beta X\times \beta X$.
Set 
$$
\pex:=\cup_{E\in \mathcal{E}}\partial E.
$$
equipped with the subspace topology from $\beta_\mathcal{E}X$.

With this topology, $\beta_\mathcal{E}X$ is a locally compact Hausdorff space, and $\pex$ is a closed subspace: $\beta_\mathcal{E}X$ actually identifies as a topological space with the \emph{coarse groupoid} $G(X)$, and $\pex$ with the `restriction to the boundary' of $G(X)$.  We will not use this, but see \cite{Skandalis:2002ng} or \cite[Chapter 10]{Roe:2003rw} for more information.

The following definition is easily seen to be equivalent to the property in part (3) of Theorem \ref{atthe}; we state it in this form to avoid having to introduce a lot of groupoid language.

\begin{bhp}\label{bhp}
The space $X$ is \emph{boundary a-T-menable} if there exists a continuous function $k:\partial_\mathcal{E}X\to\R_+$ such that the following hold.
\begin{itemize}
\item The function $k$ is \emph{normalized}: $k(\omega,\omega)=0$ for all pairs $(\omega,\omega)\in \partial_\mathcal{E}X$.
\item The function $k$ is \emph{symmetric}: $k(\omega_1,\omega_2)=k(\omega_2,\omega_1)$ for all $(\omega_1,\omega_2)\in \pex$.
\item The function $k$ is \emph{negative type}: for any finite subset $\{\omega_1,...,\omega_n\}$ of $\partial X$ such that all the pairs $(\omega_i,\omega_j)$ belong to $\partial_\mathcal{E} X$ and any finite subset $\{z_1,...,z_n\}$ of $\C$ such that $\sum_{i=1}^n z_i=0$ we have
$$
\sum_{i,j=1}^n \overline{z_i}z_j k(\omega_i,\omega_j)\leq 0.
$$
\item The function $k$ is \emph{proper}: if 
$$
c_E:=\inf\{k(\omega_1,\omega_2)~|~(\omega_1,\omega_2)\in \partial_\mathcal{E}X\setminus \partial E\}
$$ 
then the limit over the directed set of controlled sets (ordered by inclusion) $\lim_{E\in \mathcal{E}}c_E$ is infinity.
\end{itemize}
\end{bhp}

The main result we want to prove in this section is as follows.

\begin{girth}\label{girth}
Assume that $X=\sqcup X_n$ splits into finite coarse components such that $|X_n|$ tends to infinity, and $X$ is boundary a-T-menable.  Then $X$ does not have  geometric property (T).
\end{girth}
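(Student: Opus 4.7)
The plan is to refute geometric property (T) directly via Proposition \ref{nclem}(2), by building from the boundary kernel $k$ a family of $*$-representations $(\pi_t,\mathcal{H}_t)$ of $\C_u[X]$ and unit vectors $\xi_t\in(\mathcal{H}_t)_c^\perp$ such that $\|vv^*\xi_t-v\xi_t\|\to 0$ as $t\to 0^+$ for every partial translation $v$ supported in some fixed generating controlled set $E$. Schoenberg's theorem supplies the basic ingredient: for each $t>0$, the function $\phi_t:=\exp(-tk)$ is continuous, normalised ($\phi_t(\omega,\omega)=1$), symmetric and positive definite on $\pex$.

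The representation $\pi_t$ is built by viewing $\beta_{\mathcal{E}}X$ as the coarse groupoid $G(X)$ with unit space $\beta X$, and $\pex$ as its restriction to the closed invariant subset $\partial X$. There is a canonical surjection $C^*_{u,\max}(X)\cong C^*_{\max}(G(X))\twoheadrightarrow C^*_{\max}(\pex)$, and a standard GNS construction for positive definite functions on étale groupoids associates to $\phi_t$ a $*$-representation $\rho_t$ of $C^*_{\max}(\pex)$ on a Hilbert space $\mathcal{H}_t$ together with a canonical unit cyclic vector $\xi_t$. Pulling back along the surjection gives the desired representation $\pi_t$ of $\C_u[X]$, satisfying a matrix-coefficient formula of the shape $\langle\xi_t,\pi_t(T)\xi_t\rangle=\int\widetilde{T}\cdot\phi_t\,d\mu$ for a canonical measure $\mu$ on $\partial X$ and a continuous extension $\widetilde{T}$ of the matrix of $T$ to $\overline{\mathrm{supp}(T)}$.

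Granting this construction, I would verify almost-invariance as follows. For a partial translation $v$ whose graph is contained in a controlled set $F$, direct computation using the matrix-coefficient formula gives that $\|vv^*\xi_t-v\xi_t\|^2$ is controlled by $\sup_{(\omega_1,\omega_2)\in\partial F}(1-\phi_t(\omega_1,\omega_2))$, which tends to $0$ as $t\to 0$ because $k$ is continuous (hence bounded) on the compact set $\partial F$ and so $\phi_t\to 1$ uniformly on $\partial F$. To ensure $\xi_t\in(\mathcal{H}_t)_c^\perp$, I would use properness of $k$: for any fixed $t$ there exist controlled sets $F$ and partial translations $v$ with graph in $F$ such that $\partial F$ meets the region $\{k>N\}$ for arbitrarily large $N$; the matrix-coefficient computation then shows $\pi_t(v)\xi_t\neq\pi_t(vv^*)\xi_t$, so by Lemma \ref{conslem} $\xi_t\notin(\mathcal{H}_t)_c$. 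Projecting off the constant component and renormalising yields a unit vector in $(\mathcal{H}_t)_c^\perp$ with the same asymptotic behaviour, provided one checks that the projection changes the norm by at most a bounded multiplicative factor (this uses that the defect $1-\phi_t$ is already small on the controlled parts, so the norm of the projection onto $\mathcal{H}_c$ is bounded away from $1$).

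The principal obstacle is the rigorous construction of $\pi_t$ from $\phi_t$ in the second paragraph. The cleanest route passes through groupoid $C^*$-algebra machinery (the identification $C^*_{u,\max}(X)\cong C^*_{\max}(G(X))$, the closed-invariant-quotient short exact sequence induced by $\pex\hookrightarrow G(X)$, and a Renault-type GNS construction for positive definite groupoid functions). A more elementary but messier alternative, perhaps more in keeping with the style of the rest of the paper, would be to extend $\phi_t$ to an approximately positive definite kernel on all of $\beta_{\mathcal{E}}X$ (for instance by setting it equal to $1$ on a controlled neighbourhood of the diagonal and $\phi_t$ on the rest), dilate the induced Schur multiplier $M_{\phi_t}:\C_u[X]\to\C_u[X]$ via a Stinespring-type argument, and control the resulting error terms as $t\to 0$. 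Either way, producing both the matrix-coefficient formula and the normalisation $\phi_t(x,x)=1$ simultaneously, in a way compatible with the maximal norm, is the delicate technical core of the proof.
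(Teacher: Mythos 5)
Your overall strategy coincides with the paper's: exponentiate the negative type kernel via Schoenberg to get positive type functions $k_t=e^{-tk}$ on $\pex$, build GNS-type representations $\pi_t$ of $\C_u[X]$ from them using a mean at infinity, derive almost-invariance from the uniform convergence $k_t\to1$ on each compact $\partial F$, and use properness of $k$ to rule out genuine invariance. (The paper does the construction by hand rather than through groupoid $C^*$-machinery: it defines an $l^\infty(X)$-valued form $\langle\langle S,T\rangle\rangle_t(x)=\sum\overline{S_{xy}}T_{xz}k_t(y,z)$ and composes with a cluster point $\phi$ of the normalized counting states on the $X_n$ --- this is where your ``canonical measure $\mu$ on $\partial X$'' actually comes from; it is an invariant mean whose existence uses the finiteness of the components, and it is not canonical. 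The final contradiction is run through the Laplacian characterisation of Proposition \ref{tlap} rather than through Proposition \ref{nclem}(2), but that difference is cosmetic.)

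The genuine gap is in your last step, producing a \emph{unit} vector in $(\mathcal{H}_t)_c^\perp$ that is almost invariant. Showing $\xi_t\notin(\mathcal{H}_t)_c$ is not enough: writing $\xi_t=\eta_c+\eta^\perp$, the renormalised vector $\eta^\perp/\|\eta^\perp\|$ has defect $\|vv^*\cdot-v\cdot\|\leq\delta(t)/\|\eta^\perp\|$, so you need $\|\eta^\perp\|$ bounded below uniformly in $t$ (or at least decaying more slowly than $\delta(t)$). Your justification --- that $1-k_t$ being small on controlled sets keeps the projection onto $\mathcal{H}_c$ bounded away from $1$ --- points the wrong way: smallness of the defect on controlled sets makes $\xi_t$ \emph{more} like a constant vector, not less. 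What keeps $\xi_t$ away from $\mathcal{H}_c$ is the behaviour of $k_t$ far from the diagonal, and turning that into a quantitative lower bound on $\|\eta^\perp\|$ is exactly the hard point. The paper resolves it by proving something stronger (Lemma \ref{invlem}): $(\mathcal{H}_t)_c=\{0\}$ for every $t>0$, so no projection is needed at all. This in turn rests on a combinatorial lemma (Lemma \ref{graphlem}, proved with Tutte's $2$-matching theorem and the hypothesis $|X_n|\to\infty$) producing, for every $r$, bijective partial translations $t_n:X_n\to X_n$ whose graphs avoid $E^{\circ r}$; pairing the resulting unitaries against a putative constant vector and using properness of $k$ gives $|\langle[T],\pi_t(v)[T]\rangle_t|\leq 1/4$ where constancy would force a value greater than $1/2$. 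Without this lemma, or an equivalent quantitative non-invariance statement, your argument does not close. I would also note that you identify the construction of $\pi_t$ as the delicate core; in fact that part is fairly routine once the invariant mean is in hand, and the real difficulty is the constant-vector issue above.
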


Before we prove this, we show how it implies Theorem \ref{atthe}

%\begin{batrem}\label{batrem}
%If $X$ is a space with geometric property (T) and any of the following conditions, then it is weakly bounded.
%\begin{enumerate}
%\item $X$ is a box space of a group with the Haagerup property.
%\item $X$ is a sequence of graphs with girth tending to infinity.
%\item $X$ admits a fibered coarse embedding into Hilbert space.
%\item The restriction of the coarse groupoid $G(X)$ to the boundary $\partial X$ is a-T-menable. 
%\end{enumerate}
%\end{batrem}

\begin{proof}[Proof of Theorem \ref{atthe}]
As already remarked, given the definition of the coarse groupoid $G(X)$ (see \cite{Skandalis:2002ng} or \cite[Chapter 10]{Roe:2003rw}), it is clear that $X$ is boundary a-T-menable in our sense if and only if the restriction of $G(X)$ to its boundary is a-T-menable in the sense of \cite[Section 3]{Tu:1999bq} (see also \cite[Section 5]{Skandalis:2002ng} and \cite{Finn-Sell:2012fk}).   The result of \cite[Corollary 20]{Finn-Sell:2013yq} thus implies that if $\Box X$ admits a \emph{fibered coarse embedding into Hilbert space} in the sense of \cite{Chen:2012uq} (and in particular if $\Box X$ admits a coarse embedding into Hilbert space in the sense of \cite{Yu:200ve}), then $X$ is boundary a-T-menable.  Theorem \ref{atthe} follows.
\end{proof}

The remainder of this section is devoted to the proof of Theorem \ref{girth}.  We assume from now on that $X$ is as in the statement of Theorem \ref{girth}, and assume that $k:\pex\to\C$ is as in the definition of boundary a-T-menability.  

For each $t>0$ define a function $k_t:\pex\to[0,1]$ by
\begin{equation}\label{kt}
k_t:(\omega_1,\omega_2)\mapsto e^{-tk(\omega_1,\omega_2)}.
\end{equation}

\begin{kposlem}\label{kposlem}
The functions $k_t$ from line \eqref{kt} have the following properties.
\begin{enumerate}
\item They are \emph{normalized}: $k_t(\omega,\omega)=1$ for all $\omega\in \partial X$.
\item They are \emph{symmetric}: $k_t(\omega_1,\omega_2)=k_t(\omega_2,\omega_1)$ for all $(\omega_1,\omega_2)\in \partial_\mathcal{E} X$.
\item They are \emph{positive type}: for any finite subset $\{\omega_1,...,\omega_n\}$ of $\partial X$ such that all the pairs $(\omega_i,\omega_j)$ are in $\pex$, and any finite subset $\{z_1,...,z_n\}$ of $\C$,
$$
\sum_{i,j=1}^n z_i\overline{z_j} k_t(\omega_i,\omega_j)\geq 0.
$$
\end{enumerate}
\end{kposlem}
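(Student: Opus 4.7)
The normalization $k_t(\omega,\omega) = e^{-t\cdot 0} = 1$ and the symmetry $k_t(\omega_1,\omega_2) = k_t(\omega_2,\omega_1)$ are immediate from the corresponding properties of $k$. The real content is the positive-type condition, which is exactly Schoenberg's theorem: the exponential of a (symmetric, normalized) negative-type kernel is positive-type.

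My plan is to prove this by producing, for each finite tuple $\{\omega_1,\dots,\omega_n\}\subseteq \partial X$ with all pairs $(\omega_i,\omega_j)$ in $\pex$, a \emph{Hilbert space realization} of $k$ on this tuple. Let $K = (k(\omega_i,\omega_j))_{i,j=1}^n$, which is a real symmetric matrix with zero diagonal satisfying $-\overline{z}^T K z \ge 0$ for every $z\in\C^n$ with $\sum_i z_i = 0$. Equip the subspace $\C^n_0 := \{z\in\C^n : \sum z_i = 0\}$ with the positive semidefinite sesquilinear form $\langle z,w\rangle := -\tfrac{1}{2}\overline{z}^T K w$, pass to the quotient by the null space to obtain a Hilbert space $V$, and set $v_i := [\delta_i - \delta_1] \in V$. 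A direct computation using $K_{ii}=0$ and $K_{ij}=K_{ji}$ gives
$$
\|v_i - v_j\|^2 = -\tfrac{1}{2}\bigl(K_{ii} - 2K_{ij} + K_{jj}\bigr) = K_{ij} = k(\omega_i,\omega_j).
$$

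With this realization in hand, I would expand
$$
\|v_i - v_j\|^2 = \|v_i\|^2 + \|v_j\|^2 - 2\operatorname{Re}\langle v_i,v_j\rangle,
$$
so that, letting $D$ be the positive diagonal matrix with entries $e^{-t\|v_i\|^2}$,
$$
\bigl(e^{-tk(\omega_i,\omega_j)}\bigr)_{i,j} = D\cdot \bigl(e^{2t\operatorname{Re}\langle v_i,v_j\rangle}\bigr)_{i,j}\cdot D.
$$
Hence it suffices that $\bigl(e^{2t\operatorname{Re}\langle v_i,v_j\rangle}\bigr)_{i,j}$ be positive semidefinite. But $\bigl(2t\operatorname{Re}\langle v_i,v_j\rangle\bigr)_{i,j}$ is a Gram matrix (after viewing $V$ as a real Hilbert space), hence positive semidefinite, and by the Schur product theorem its entrywise $n$th powers are also positive semidefinite for every $n\ge 0$; summing the convergent series $\sum_n x^n/n!$ entry by entry shows $\bigl(e^{2t\operatorname{Re}\langle v_i,v_j\rangle}\bigr)_{i,j}$ is a norm-convergent sum of positive semidefinite matrices, hence positive semidefinite. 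Translating the PSD condition back in the form $\sum_{i,j} z_i \overline{z_j} k_t(\omega_i,\omega_j) \ge 0$ (using that $k_t$ is real, so the two sign conventions for positive type agree) gives (3).

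The argument is essentially bookkeeping once one recognizes Schoenberg's theorem; no feature of $\pex$ is used beyond the fact that the hypotheses on $k$ apply to the chosen tuple. The only mildly delicate step is keeping sign conventions straight in the realization $k(\omega_i,\omega_j) = \|v_i-v_j\|^2$; I expect that to be the main (very minor) place one can slip up.
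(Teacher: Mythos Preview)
Your proposal is correct and follows the same route as the paper: both reduce part (3) to Schoenberg's theorem applied separately to each finite tuple $\{\omega_1,\dots,\omega_n\}$ with all pairs in $\pex$. The only difference is that the paper simply cites Schoenberg's theorem (e.g.\ \cite[Theorem C.3.2]{Bekka:2000kx}), whereas you unpack a standard proof of it via the Hilbert-space realization $k(\omega_i,\omega_j)=\|v_i-v_j\|^2$ and the Schur product theorem; this is extra detail rather than a different approach.
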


\begin{proof}
Parts (1) and (2) are obvious.  Part (3) is essentially a version of a well-known theorem of Schoenberg (see for example \cite[Theorem C.3.2]{Bekka:2000kx}): it follows from the statement of \cite[Theorem C.3.2]{Bekka:2000kx} by applying that result separately to each finite subset $\{\omega_1,...,\omega_n\}$ of $\partial X$ such that $(\omega_i,\omega_j)\in \pex$ for all $i,j$.
\end{proof}

We will now use the functions $k_t$ to construct $*$-representations of $\C_u[X]$.  First, extend\footnote{The exact extensions we use will not affect the representations we build; however, we will make slightly refined choices of extension below in order to analyze properties of the representations.} the functions $k_t:\pex\to[0,1]$ to continuous functions $k_t:\beta_\mathcal{E}X\to [0,1]$; we may assume that $k_t(x,x)=1$ for all $x\in X$.  Define a form
$$
\langle\langle,\rangle\rangle_t :\C_u[X]\times\C_u[X]\to l^\infty(X)=C(\beta X)
$$
via the formula
$$
\langle\langle S,T\rangle \rangle_t(x)= \sum_{(y,z)\in X\times X}\overline{S_{xy}}T_{xz}k_t(y,z);
$$
as $X$ has bounded geometry the sum contains uniformly finitely many terms for each $x$, so this is well-defined.

In order to use these functions to build representations of $\C_u[X]$, we need some preliminaries.  First, we have the following lemma about elementary controlled sets.

\begin{cslem}\label{cslem}
Let $E$ be an elementary controlled set on $X$ which is the graph of a partial translation $t:A\to B$.  Let $\overline{t}:\overline{A}\to\overline{B}$ denote the extension of $t$ to the Stone-\v{C}ech compactifications. Then the closure $\overline{E}$ of $E$ in $\beta X\times \beta X$ is the set
$$
\{(t(\omega),\omega)~|~\omega\in \overline{A}\},
$$
which identifies homeomorphically with $\beta E$.
\end{cslem}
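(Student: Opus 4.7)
The plan is to exploit the universal property of the Stone-\v{C}ech compactification to produce the desired identification through a graph map.

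First I would define $\gamma: \overline{A} \to \beta X \times \beta X$ by $\gamma(\omega) := (\overline{t}(\omega),\omega)$. This is continuous since both coordinate components are, and its image clearly contains $E = \{(t(a),a) : a\in A\}$. To establish that $\overline{E}$ equals the image of $\gamma$, I would argue by two inclusions. On the one hand, $\overline{A}$ is compact and $\beta X\times \beta X$ is Hausdorff, so $\gamma(\overline{A})$ is a closed subset of $\beta X\times \beta X$ containing $E$; hence $\overline{E}\subseteq \gamma(\overline{A})$. On the other hand, given any $\omega\in \overline{A}$, choose a net $(a_\lambda)$ in $A$ converging to $\omega$ in $\beta X$ (using that $A$ is dense in $\overline{A}$). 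By continuity of $\overline{t}$ we have $t(a_\lambda)=\overline{t}(a_\lambda)\to \overline{t}(\omega)$ in $\beta X$, so the net $(t(a_\lambda),a_\lambda)\in E$ converges in $\beta X\times \beta X$ to $\gamma(\omega)$. This shows $\gamma(\omega)\in \overline{E}$, giving the reverse inclusion.

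For the homeomorphism claim, I would note that $\gamma:\overline{A}\to\overline{E}$ is a continuous bijection: injectivity is immediate from the second coordinate projection, and surjectivity is what was just shown. A continuous bijection from a compact space to a Hausdorff space is a homeomorphism, so $\overline{A}\cong \overline{E}$. To bring $\beta E$ into the picture, I would use two canonical identifications: (a) since $X$ is discrete and $A\subseteq X$, the closure $\overline{A}$ of $A$ in $\beta X$ canonically identifies with $\beta A$ (this was noted in the paragraph preceding the lemma); and (b) the first-coordinate-free bijection $A\to E$, $a\mapsto (t(a),a)$, is a bijection of discrete sets, hence induces a canonical homeomorphism $\beta A\cong \beta E$. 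Composing these with $\gamma$ yields $\beta E\cong \overline{E}$ as claimed.

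There is no serious obstacle here; the only point that requires a little care is the verification that $\gamma$ really is the Stone-\v{C}ech extension of the inclusion $E\hookrightarrow \beta X\times \beta X$ (when $E$ is viewed as the discrete set $A$ via the second-coordinate bijection), so that identifying the image of $\gamma$ with $\overline{E}$ coincides with the canonical $\beta E$-identification. This follows from uniqueness of extensions in the universal property, since $\gamma$ agrees with this inclusion on the dense subset $A\subseteq \overline{A}$.
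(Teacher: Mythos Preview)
Your proof is correct and follows essentially the same route as the paper: both construct the graph map $\omega\mapsto(\overline{t}(\omega),\omega)$ on $\overline{A}$, identify its image with $\overline{E}$, and use the second-coordinate projection to obtain injectivity (hence a homeomorphism from compact to Hausdorff). The only cosmetic difference is that the paper obtains this map as the unique Stone--\v{C}ech extension $\overline{g}$ of the graph inclusion $g:A\to\beta X\times\beta X$ and invokes uniqueness to identify it with $(\overline{t}(\omega),\omega)$, whereas you write the map down directly and verify the image equality $\gamma(\overline{A})=\overline{E}$ by the two-inclusions/nets argument; your final paragraph recovers exactly the paper's uniqueness observation.
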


\begin{proof}
Denote by $g:A\to X\times X$ the `graph bijection' $g:x\mapsto (t(x),x)$ with image $E$.  Consider the maps
$$
A\stackrel{g}{\to}\beta X\times \beta X\stackrel{\pi}{\to} \beta X,
$$
where $\pi$ is the projection onto the second factor.  The composition of these maps is just the inclusion of $A$ into $\beta X$.  Now, the universal property of the Stone-\v{C}ech compactification gives rise to maps
$$
\overline{A}\stackrel{\overline{g}}{\to}\beta X\times \beta X\stackrel{\pi}{\to} \beta X,
$$
where the image of $\overline{g}$ is $\overline{E}$.  Uniqueness of the extension $\overline{g}$ implies that it must be equal to the map $\omega\mapsto (\overline{t}(\omega),\omega)$, which gives the characterisation of $\overline{E}$.  On the other hand, $\pi\circ \overline{g}$ is the identity inclusion $\beta A=\overline{A}\to\beta X$ (by uniqueness again), which implies that $\overline{g}$ is injective.  Hence $\overline{E}$ identifies canonically with $\beta A$, and so with $\beta E$.
\end{proof}

Now, let $f:X\times X\to\C$ be a bounded function with support in a controlled set $E$.  Using Lemma \ref{elde}, we may write $E=E_1\sqcup \cdots \sqcup E_n$, where each $E_i$ is elementary.  Say $E_i$ is the graph of the partial translation $t_i:A_i\to B_i$.  For each $i=1,...,n$, define $g_i:A_i\to \C$ by $g_i(x)=f_i(t_i(x),x)$, and extend $g_i$ to $\overline{A_i}$.  The extension of $f|_{E_i}$ to $\overline{E_i}\cong \beta E_i$ must be given by
$$
(t(\omega),\omega)\mapsto g(\omega)
$$
by uniqueness.  This formula then extends $f$ to all of $\overline{E}=\overline{E_1}\cup\cdots \cup \overline{E_n}$.  Using subdivisions, it is not difficult to see that this extension does not depend on the choice of decomposition $E=E_1\sqcup \cdots \sqcup E_n$.  

If $T$ is any operator in $\C_u[X]$ supported in a controlled set $E$, and $(\omega_1,\omega_2)$ is an element of $\partial E$, we define $T_{\omega_1\omega_2}$ using the extension process above applied to the function from $E$ to $\C$ defined by $(x,y)\mapsto T_{xy}$.  

For a controlled set $E$, we also define 
\begin{equation}\label{nofe}
N(E):=\max_{x\in X}|\{y\in X~|~(x,y)\in E\cup E^{-1}\}|
\end{equation}
and for an element $T$ of $\C_u[X]$, define 
\begin{equation}\label{noft}
N(T):=N(\{(x,y)\in X\times X~|~T_{xy}\neq 0\}).
\end{equation}

\begin{funprops}\label{funprops}
For each $t>0$ the form
$$
\langle\langle,\rangle\rangle_t :\C_u[X]\times\C_u[X]\to l^\infty(X)=C(\beta X)
$$
has the following properties.
\begin{enumerate}
\item The form $\langle\langle,\rangle\rangle_t$ is linear in the second variable and conjugate linear in the first.
\item For any $S,T\in \C_u[X]$ the $l^\infty$-norm of $\langle\langle S,T\rangle \rangle_t$ is bounded by
$$
\|\langle\langle S,T\rangle \rangle_t\|\leq \sup_{x,y\in X}|T_{xy}|\sup_{x,y\in X}|S_{xy}|N(T)N(S).
$$
\item The restriction of $\langle\langle S,T\rangle \rangle_t$ to $\partial X$ is given by the formula
$$
\langle\langle S,T\rangle \rangle_t(\omega)= \sum_{(\omega_1,\omega_2)\in \pex}\overline{S_{\omega\omega_1}}T_{\omega\omega_2}k_t(\omega_1,\omega_2).
$$
\item For any $S$ in $\C_u[X]$, the restriction of $\langle\langle S,S\rangle\rangle_t$ to $\partial X$ only takes non-negative values.
\end{enumerate}
\end{funprops}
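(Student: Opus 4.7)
My plan is to prove the four parts in order: (1) and (2) follow directly from the defining formula; (3) is the main content, requiring a partial-translation decomposition together with the continuity of $k_t$ on $\beta_\mathcal{E}X$; and (4) drops out of (3) combined with Lemma \ref{kposlem}(3).

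Part (1) is immediate from the defining formula $\langle\langle S,T\rangle\rangle_t(x)=\sum_{y,z}\overline{S_{xy}}T_{xz}k_t(y,z)$, which is visibly linear in $T$ and conjugate linear in $S$. For (2), for each fixed $x$ only pairs $(y,z)$ with $(x,y)\in\text{supp}(S)$ and $(x,z)\in\text{supp}(T)$ contribute; by bounded geometry there are at most $N(S)\,N(T)$ such pairs, and each summand has modulus bounded by $\sup|S_{xy}|\sup|T_{xy}|$ since $0\le k_t\le 1$. This yields the claimed $l^\infty$-bound.

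For (3), I will apply Lemma \ref{elde} to write $S=\sum_i f_iv_i$ as a finite sum in which each $v_i$ is the partial translation operator associated to a bijection $t_i\colon A_i\to B_i$, each $f_i\in l^\infty(X)$, and the graphs of the $t_i$ are disjoint and cover $\text{supp}(S)$; decompose $T=\sum_j g_jw_j$ analogously, with $w_j\leftrightarrow s_j\colon C_j\to D_j$. Substituting into the definition and using that $\sum_y(v_i)_{xy}h(y)=h(t_i^{-1}(x))\chi_{B_i}(x)$ collapses the expression to
\[
\langle\langle S,T\rangle\rangle_t(x)=\sum_{i,j}\overline{f_i(x)}\,g_j(x)\,k_t(t_i^{-1}(x),s_j^{-1}(x))\,\chi_{B_i\cap D_j}(x).
\]
For each $(i,j)$ the map $B_i\cap D_j\to X\times X$, $x\mapsto(t_i^{-1}(x),s_j^{-1}(x))$, has image in a single controlled set, so Lemma \ref{cslem} gives a continuous extension $\overline{B_i\cap D_j}\to\beta_\mathcal{E}X$ sending $\omega$ to $(\overline{t_i^{-1}}(\omega),\overline{s_j^{-1}}(\omega))$. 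Composing with the continuous $k_t\colon\beta_\mathcal{E}X\to[0,1]$ and multiplying by the Stone-\v{C}ech extensions of $\overline{f_i}$ and $g_j$ yields a continuous extension to $\beta X$ of each summand, and summing produces the unique continuous extension of $\langle\langle S,T\rangle\rangle_t\in l^\infty(X)=C(\beta X)$. At $\omega\in\partial X$, the boundary matrix-coefficient convention established just before the lemma gives $S_{\omega\omega_1}=f_i(\omega)$ precisely when $\omega\in\overline{B_i}$ and $\omega_1=\overline{t_i^{-1}}(\omega)$, and vanishes otherwise (analogously for $T$); reindexing the double sum by the pairs $(\omega_1,\omega_2)=(\overline{t_i^{-1}}(\omega),\overline{s_j^{-1}}(\omega))$ produces the stated formula.

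Part (4) is then immediate: setting $T=S$ in (3) writes $\langle\langle S,S\rangle\rangle_t(\omega)$ as a finite sum $\sum_{i,j}\overline{z_i}z_jk_t(\omega_i,\omega_j)$ with $z_i=S_{\omega\omega_i}$, and all the pairs $(\omega_i,\omega_j)$ lie in $\pex$ (they arise as continuous boundary extensions of the controlled pairs $(t_i^{-1}(x),t_j^{-1}(x))$ in $X\times X$); Lemma \ref{kposlem}(3), combined with the symmetry of $k_t$, then gives $\langle\langle S,S\rangle\rangle_t(\omega)\ge 0$. The main obstacle is the bookkeeping in (3): verifying that the assembled continuous extensions of the partial-translation pieces genuinely recover the Stone-\v{C}ech extension of $\langle\langle S,T\rangle\rangle_t\in C(\beta X)$, and that the resulting boundary expression reindexes into a sum over $\pex$. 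Lemma \ref{cslem}, together with uniqueness of continuous extensions from a dense subset, is the technical heart.
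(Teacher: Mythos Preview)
Your proof is correct and follows essentially the same approach as the paper. The only cosmetic difference is that the paper uses sesquilinearity (your part (1)) to reduce part (3) to the case where $S$ and $T$ each have elementary support and then computes both sides for a single such pair, whereas you keep the full decomposition $S=\sum_i f_iv_i$, $T=\sum_j g_jw_j$ and work with the resulting finite double sum; the underlying computation and use of Lemma~\ref{cslem} are the same.
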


\begin{proof}
Part (1) is clear.  Part (2) follows from the fact that $k_t$ takes values in $[0,1]$ and the triangle inequality.  

For part (3), note that any $S\in \C_u[X]$ is a finite sum of operators with the property that 
$$
\{(x,y)~|~S_{xy}\neq 0\}
$$
is elementary.  Using part (1), it suffices to assume that $S$ and $T$ have this property.  Assuming this, let $s:A_s\to B_s$ be the partial translation corresponding to $S$, and define $f:A_s\to\C$ by $f(x)=S_{s(x)x}$.  Similarly, define $g:A_t\to\C$ by $g(x)=T_{t(x)x}$, where $t:A_t\to B_t$ is the partial translation corresponding to $T$.  Then for any fixed $\omega\in \partial X$ we have via the discussion preceding this lemma that
$$
S_{\omega\omega'}=\left\{\begin{array}{ll} f(s^{-1}(\omega)) & \omega\in\overline{B_s} \text{ and } \omega=s(\omega') \\ 0 & \text{otherwise}\end{array}\right.
$$ 
and similarly for $T$. We thus have
\begin{align*}
\sum_{(\omega_1,\omega_2)\in \pex} & \overline{S_{\omega\omega_1}}T_{\omega\omega_2}k_t(\omega_1,\omega_2) \\ &=\left\{\begin{array}{ll}\overline{f(s^{-1}(\omega))}g(t^{-1}(\omega))k_t(s^{-1}(\omega),t^{-1}(\omega)) & \omega\in \overline{B_t}\cap \overline{B_s} \\ 0 & \text{otherwise}\end{array}\right.
\end{align*}
On the other hand, for any $x\in X$,
\begin{align*}
\langle\langle S,T\rangle \rangle_t(x) & =\sum_{(y,z)\in X\times X}\overline{S_{xy}}T_{xz}k_t(y,z) \\ & =\left\{\begin{array}{ll} \overline{f(s^{-1}(x))}g(t^{-1}(x))k_t(s^{-1}(x),t^{-1}(x)) & x\in B_t\cap B_s \\ 0 & \text{otherwise}\end{array}\right.
\end{align*}
The claimed formula follows.

Part (4) follows from part (3) and Lemma \ref{kposlem}. 
\end{proof}

Now, for each $n$, define a state on $l^\infty(X)$ by the formula
$$
\phi_n:l^\infty(X)\to\C,~~~f\mapsto \frac{1}{|X_n|}\sum_{x\in X_n}f(x).
$$
Let $\phi$ be any cluster point of the sequence $(\phi_n)$ in the state space of $l^\infty(X)$.  Note that $\phi$ descends to a state on the quotient $l^\infty(X)/C_0(X)$, which naturally identifies with $C(\partial X)$.

For each $t>0$ we may thus define a form on $\C_u[X]$ by
$$
\langle S,T\rangle_t=\phi(\langle\langle S,T\rangle \rangle_t).
$$
Using Lemma \ref{funprops} parts (1) and (2), and that $\phi$ is a state, each form $\langle,\rangle_t$ is linear in the second variable, conjugate linear in the first, and positive semi-definite; separation and completion thus defines a Hilbert space $\mathcal{H}_t$ for each $t>0$.  An element $S\in \C_u[X]$ gives rise to an equivalence class $[S]$ in this Hilbert space.  Provisionally define a representation $\pi_t$ of $\C_u[X]$ on $\mathcal{H}_t$ via the formula
$$
\pi_t(T):[S]\mapsto [TS].
$$
Lemma \ref{funprops} part (3), and that $\phi$ has norm one, implies that each operator $\pi_t(T)$ extends to a bounded linear operator on $\mathcal{H}_t$, and thus we have a well-defined map
\begin{equation}\label{pit}
\pi_t:\C_u[X]\to\mathcal{B}(\mathcal{H}_t).
\end{equation}

\begin{pitlem}\label{pitlem}
The map $\pi_t$ in line \eqref{pit} above is a $*$-representation.  It does not depend on the choice of extension of $k_t$.
\end{pitlem}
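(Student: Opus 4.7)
The plan is to verify (a) well-definedness and boundedness of $\pi_t(T)$ on $\mathcal{H}_t$, (b) multiplicativity and unitality of $\pi_t$, (c) that $\pi_t$ is $*$-preserving, and (d) independence of the chosen extension of $k_t$. Parts (b) and (d) are essentially automatic: multiplicativity and unitality follow from $[T_1T_2S]=[T_1(T_2S)]$ and $[1\cdot S]=[S]$, while (d) is immediate from Lemma \ref{funprops}(3) together with the fact that $\phi$ factors through $l^\infty(X)/C_0(X)=C(\partial X)$. Indeed, any two admissible extensions of $k_t$ agree on $\partial_\mathcal{E}X$, so by Lemma \ref{funprops}(3) the corresponding forms $\langle\langle S,T\rangle\rangle_t$ agree on $\partial X$; their difference lies in $C_0(X)$ and is killed by $\phi$.

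The technical heart is the following invariance property of $\phi$: for any partial translation $\tau:A\to B$ and any $g\in l^\infty(X)$ supported in $B$, one has $\phi(g\circ\tau)=\phi(g)$. This holds because controlled sets cannot connect distinct coarse components, so $\tau$ restricts to a bijection $A\cap X_n\to B\cap X_n$ for each $n$; hence $\phi_n(g\circ\tau)=\phi_n(g)$ exactly, and this equality passes to any weak-$*$ cluster point $\phi$.

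With this invariance in hand, (a) and (c) reduce to matrix-coefficient bookkeeping. By linearity and the decomposition $T=\sum_if_iv_i$ used in the proof of Lemma \ref{conslem}, it suffices to treat $T=fv$ where $v$ corresponds to $\tau:A\to B$. A direct computation from the definitions yields the key identity
\[
\langle\langle (fv)B,C\rangle\rangle_t(\tau(x'))=\langle\langle B,(fv)^*C\rangle\rangle_t(x')\qquad(x'\in A),
\]
with the left side supported on $B$ and the right side on $A$. In other words, $\langle\langle (fv)B,C\rangle\rangle_t\circ\tau=\langle\langle B,(fv)^*C\rangle\rangle_t$; the invariance above then gives $\phi(\langle\langle (fv)B,C\rangle\rangle_t)=\phi(\langle\langle B,(fv)^*C\rangle\rangle_t)$, which unpacks to the adjoint property $\langle\pi_t(fv)[B],[C]\rangle_t=\langle[B],\pi_t((fv)^*)[C]\rangle_t$ and establishes (c). For (a), the analogous identity $\langle\langle (fv)S,(fv)S\rangle\rangle_t(x)=|f(x)|^2\chi_B(x)\langle\langle S,S\rangle\rangle_t(\tau^{-1}(x))$, combined with the invariance and the positivity of $\langle\langle S,S\rangle\rangle_t$ on $\partial X$ from Lemma \ref{funprops}(4), gives $\|\pi_t(fv)[S]\|_t^2\le\|f\|_\infty^2\|[S]\|_t^2$; a triangle inequality over a finite decomposition of $T$ then controls $\|\pi_t(T)\|$ in general and also ensures well-definedness on the quotient. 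The main obstacle is really isolating the invariance property of $\phi$ above; once that is set up, the rest is routine.
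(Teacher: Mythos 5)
Your argument is correct, and it organizes the proof differently from the paper. The paper proves the adjoint property in one stroke: for arbitrary $R,S,T\in\C_u[X]$ it expands $\phi_n(\langle\langle R^*S,T\rangle\rangle_t)$ as a four-fold sum over $X_n$ and reindexes (swapping the roles of the outer summation variable $x$ and the inner variable $u$) to obtain $\phi_n(\langle\langle S,RT\rangle\rangle_t)$ exactly, for every $n$; the identity then passes to the cluster point $\phi$. Your proof factors the same underlying fact --- that $\phi_n$ is the uniform average over the finite component $X_n$, hence invariant under any bijection of $X_n$, and in particular under partial translations (which preserve coarse components because their graphs are controlled) --- into an explicit invariance lemma for $\phi$, and then reduces to generators of the form $fv$ via the decomposition from Lemma \ref{conslem}. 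Both routes are sound; the paper's is shorter and avoids the decomposition, while yours makes the mechanism more transparent and, as a byproduct, yields the clean estimate $\|\pi_t(fv)[S]\|_t\le\|f\|_\infty\|[S]\|_t$, which settles boundedness of $\pi_t(T)$ more carefully than the paper does (the paper disposes of boundedness in the paragraph preceding the lemma with only a brief citation). One point worth keeping explicit in your write-up, which you do flag: the inequality $\phi(|f|^2h)\le\|f\|_\infty^2\phi(h)$ and the comparison $\phi(\chi_A\,\langle\langle S,S\rangle\rangle_t)\le\phi(\langle\langle S,S\rangle\rangle_t)$ are only legitimate because $\phi$ factors through $C(\partial X)$, where $\langle\langle S,S\rangle\rangle_t$ is nonnegative by Lemma \ref{funprops}(4); on $X$ itself the extension of $k_t$ need not be of positive type. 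Your treatment of extension-independence coincides with the paper's.
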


\begin{proof}
Linearity and multiplicativity of $\pi_t$ are clear, so to show that $\pi_t$ is a $*$-representation it suffices to check that it preserves adjoints.  

As $\phi$ is cluster point of the functionals $\phi_n$, it suffices to show that 
$$
\phi_n(\langle\langle  R^*S,T \rangle\rangle_t)=\phi_n(\langle\langle  S,RT \rangle\rangle_t)
$$
for all $n$ and all $R,S,T\in \C_u[X]$.  Computing,  
\begin{align*}
\phi_n(\langle\langle  R^*S,T \rangle\rangle_t)&=\frac{1}{|X_n|}\sum_{x\in X_n}\sum_{y,z\in X_n}\sum_{u\in X_n}\overline{R^*_{xu}S_{uy}}T_{xz}k_t(y,z) \\
&=\frac{1}{|X_n|}\sum_{x,y,z,u\in X_n}\overline{S_{uy}}R_{ux}T_{xz}k_t(y,z) \\
&=\frac{1}{|X_n|}\sum_{x\in X_n}\sum_{y,z\in X_n}\overline{S_{xy}}\sum_{u\in X_n}R_{xu}T_{uz}k_t(y,z) \\
&=\phi_n(\langle\langle  S,RT \rangle\rangle_t).
\end{align*}
The fact that $\pi_t$ does not depend on the choice of extension of $k_t$ follows from Lemma \ref{funprops} part (3) and that $\phi$ only depends on the restriction of a function in $C(\beta X)$ to $\partial X$.
\end{proof}

Our eventual goal is to preclude geometric property (T) by showing that the representations $\mathcal{H}_t$ `come close' to containing constant vectors for small $t>0$, although none of them actually do contain constant vectors.  The following lemma is the next step.

\begin{invlem}\label{invlem}
For any $t>0$, the $*$-representation $\pi_t:\C_u[X]\to\mathcal{B}(\mathcal{H}_t)$ contains no constant vectors.
\end{invlem}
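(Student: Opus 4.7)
I argue by contradiction. Suppose $\xi\in\mathcal{H}_t$ is a nonzero constant vector; the plan is to extract from $\xi$ a $\partial_\mathcal{E}X$-invariant Radon probability measure $\mu$ on the Stone-\v{C}ech boundary $\partial X$, and then rule this out via properness of $k$.

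First I would verify that $\pi_t$ kills the ideal $\C_f[X]$: for $S\in\C_f[X]$, the function $x\mapsto\langle\langle S,S\rangle\rangle_t(x)$ is finitely supported on $X$, hence lies in $C_0(X)$, on which $\phi$ vanishes (as $\phi$ is a cluster point of the averages $\phi_n$ and $|X_n|\to\infty$). Thus $\|[S]\|_t=0$, and since $\pi_t(S)[T]=[ST]\in[\C_f[X]]=\{0\}$, we have $\pi_t(\C_f[X])=\{0\}$. In particular, the state $f\mapsto\langle\xi,\pi_t(f)\xi\rangle$ on $l^\infty(X)$ vanishes on $C_0(X)$, and so descends to a state on $l^\infty(X)/C_0(X)\cong C(\partial X)$ corresponding to a Radon probability measure $\mu$ on $\partial X$ (after normalizing by $\|\xi\|^2$).

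Second, I would use constancy to derive invariance of $\mu$ under the coarse groupoid action on $\partial X$. For any partial translation $v:A\to B$, Lemma~\ref{conslem} gives $\pi_t(v)\xi=\chi_B\xi$, whence
$$\mu(\partial B)\,\|\xi\|^2=\|\chi_B\xi\|^2=\|\pi_t(v)\xi\|^2=\langle\xi,\pi_t(v^*v)\xi\rangle=\langle\xi,\chi_A\xi\rangle=\mu(\partial A)\,\|\xi\|^2,$$
so $\mu(\partial A)=\mu(\partial B)$; that is, $\mu$ is $\partial_\mathcal{E}X$-invariant.

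The third and main step is to contradict the existence of such a nonzero $\partial_\mathcal{E}X$-invariant probability measure $\mu$ by using properness of $k$. The underlying intuition is the groupoid analogue of the classical incompatibility between amenability (existence of an invariant mean) and a proper affine isometric action on a Hilbert space. Concretely, $k$ being a continuous proper negative-type function on $\partial_\mathcal{E}X$ gives, via the Schoenberg/GNS construction underlying Lemma~\ref{kposlem}, a Hilbert bundle over $\partial X$ together with a $1$-cocycle $b$ whose squared norm is $k$; properness says $\|b\|$ is unbounded at infinity in the groupoid. A nonzero $\partial_\mathcal{E}X$-invariant probability measure $\mu$ on the base should then produce, by a barycenter/center-of-mass construction, a $\partial_\mathcal{E}X$-invariant (zero-cocycle) section, contradicting the unboundedness of $\|b\|$. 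The main obstacle is this third step: carrying out the barycenter argument cleanly in the Stone-\v{C}ech / coarse-groupoid setting requires care with measurability and integrability of the bundle-valued section against $\mu$. A more hands-on fallback I would try first is a direct decay estimate: for iterates $v^n$ of a well-chosen partial translation whose graphs escape every controlled set, the matrix coefficient
$$\langle[1],\pi_t(v^n)[1]\rangle_t=\phi\bigl(\chi_{\mathrm{range}(v^n)}\cdot k_t(\cdot,t^{-n}(\cdot))\bigr)$$
tends to $0$ by properness of $k$, while invariance of $\mu$ forces $\langle\xi,\pi_t(v^n)\xi\rangle=\mu(\partial\,\mathrm{range}(v^n))\|\xi\|^2$ to remain equal to the positive constant $\mu(\partial A)\|\xi\|^2$; bridging the two quantities (using that $\xi$ is a limit of vectors of the form $[T]$ for $T\in\C_u[X]$, combined with a Cauchy-Schwarz estimate) should produce the desired contradiction.
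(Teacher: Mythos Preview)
Your primary route via Step 3 does not work: the ``incompatibility between amenability and a proper affine isometric action'' you invoke is simply false. Every amenable group is a-T-menable; for instance $\Z$ has both an invariant mean and the proper $1$-cocycle $n\mapsto n$ on $\R$. The barycenter construction you sketch would require averaging an \emph{unbounded} cocycle, which is meaningless; properness is precisely what prevents the orbit from being bounded. So the existence of a $\partial_\mathcal{E}X$-invariant probability measure on $\partial X$ (which, incidentally, already follows from the construction of $\phi$ and does not need a constant vector) in no way contradicts boundary a-T-menability.

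Your fallback is closer to what the paper does, but it also has a genuine gap. You want iterates $v^n$ of a fixed partial translation whose graphs escape every controlled set, while simultaneously keeping $\mu(\partial\,\mathrm{range}(v^n))$ bounded below. These two requirements fight each other: if $v$ is essentially bijective on each finite component $X_n$ (needed so that constancy gives $\langle\xi,\pi_t(v^n)\xi\rangle=\|\xi\|^2$), then $v|_{X_n}$ is a permutation of a finite set, hence periodic, and its iterates do \emph{not} escape controlled sets. If $v$ is not essentially bijective, then $\mathrm{range}(v^n)$ shrinks and you have no lower bound on the matrix coefficient. The paper resolves this not by iterating, but by proving a combinatorial lemma (Lemma \ref{graphlem}, via Tutte's $2$-matching theorem): for each $r$ there exist $s$ and, on all sufficiently large $X_n$, a \emph{bijective} partial translation $t_n:X_n\to X_n$ with graph in $E^{\circ s}\setminus E^{\circ r}$. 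Assembling these gives a single partial translation $v$ that is unitary on $\mathcal{H}_t$ and sits outside $E^{\circ r}$; one then approximates $\xi$ by some $[T]$ and estimates $\langle[T],\pi_t(v)[T]\rangle_t$ directly using properness of $k$. This combinatorial input is the missing idea in your argument.
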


In order to prove this, we need a combinatorial lemma.

\begin{graphlem}\label{graphlem}
Let $E$ be a symmetric generating set for the coarse structure on $X$ that contains the diagonal, and fix $r\in \N$.   Then there exists $s,N\in \N$ such that for all $n\geq N$ there exists a bijective partial translation $t_n:X_n\to X_n$ such that 
$$
\text{graph}(t_n)\subseteq E^{\circ s}\setminus E^{\circ r}.
$$
\end{graphlem}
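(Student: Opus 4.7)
The plan is to apply Hall's marriage theorem to the bipartite graph $G_n$ on two copies of $X_n$ with edge set $\{(x,y) \in X_n \times X_n : (x,y) \in E^{\circ s} \setminus E^{\circ r}\}$; a perfect matching in $G_n$ is precisely the graph of a bijective partial translation $t_n : X_n \to X_n$ meeting the requirements of the lemma, so the task reduces to choosing $s$ and $N$ so that Hall's condition $|N_{G_n}(A)| \geq |A|$ holds for every $A \subseteq X_n$ and every $n \geq N$.

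For the setup, $E$ being symmetric, containing the diagonal, and generating the coarse structure, together with $X_n$ being a single coarse component, makes the restriction of $E$ to $X_n \times X_n$ the edge set of a connected graph on $X_n$. Bounded geometry gives uniform ball-size bounds $M_k := M(E^{\circ k}) < \infty$, and in particular a uniform bound on vertex degrees. Combined with $|X_n| \to \infty$ this forces the diameter of $X_n$ to tend to infinity, so once $n$ is large enough that $\operatorname{diam}(X_n) \geq s$, every $x \in X_n$ satisfies $|B_s(x)| \geq s+1$ (pick a geodesic of length $s$ issuing from $x$) and hence has degree at least $s+1-M_r$ in $G_n$.

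For the Hall verification, the key observation is that any $y \notin N_{G_n}(A)$ forces $A \cap (B_s(y) \setminus B_r(y)) = \varnothing$, so $A \cap B_s(y) \subseteq B_r(y)$ and $|A \cap B_s(y)| \leq M_r$; symmetrically, for every $x \in A$ one has $|B_s(x) \cap (X_n \setminus N_{G_n}(A))| \leq M_r$. Combined with the lower bound $|B_s(x)| \geq s+1$, this gives $|B_s(x) \cap N_{G_n}(A)| \geq s+1-M_r$ for each $x \in A$. Summing over $x \in A$ and switching the order of summation yields
\[
|A|(s+1-M_r) \;\leq\; \sum_{y \in N_{G_n}(A)} |B_s(y) \cap A|,
\]
from which Hall's condition $|N_{G_n}(A)| \geq |A|$ is to be extracted by an appropriate upper bound on the right-hand sum, together with a choice of $s$ sufficiently large in terms of $r$ and the bounded-geometry constants.

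The main obstacle is the final quantitative step: the naive bound $|B_s(y) \cap A| \leq M_s$ is too weak, since $M_s$ may grow much faster than $s$ in a bounded-degree graph, while the degree lower bound $s+1-M_r$ is only linear in $s$. Overcoming this should use a sharper bound on the right-hand sum, for example by means of a Voronoi-type decomposition of $X_n$ around a maximal $2s$-separated subset, on whose chunks the count becomes tight, or through a direct construction of $t_n$ layer by layer using a BFS spanning tree of $X_n$. Once this combinatorial step is in hand, the perfect matching in $G_n$ provides the required bijection $t_n$, and the constants $s, N$ can be read off the estimates.
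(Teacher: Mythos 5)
Your overall strategy (realise $t_n$ as a perfect matching in the bipartite graph whose edges are the pairs in $E^{\circ s}\setminus E^{\circ r}$) is the right one, and your preliminary estimates are sound: the lower bound $|B_s(x)|\geq s+1$ once $|X_n|$ exceeds the ball bound $M_s$, the degree bound $s+1-M_r$, and the double-counting identity are all correct. But the proof has a genuine gap exactly where you flag it, and it is not a routine one to fill. Verifying Hall's condition for \emph{all} subsets $A\subseteq X_n$ by degree counting cannot work in this generality: your inequality only yields $|N_{G_n}(A)|\geq |A|(s+1-M_r)/M_s$, and in a bounded-degree graph $M_s$ may grow exponentially in $s$ while the degree lower bound is only linear in $s$, so no choice of $s$ rescues the estimate. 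The proposed repairs (a Voronoi decomposition around a $2s$-separated net, or a layer-by-layer construction along a BFS tree) are not developed, and it is not evident either can be pushed through; in particular the BFS idea must contend with the requirement that \emph{every} vertex, including those near the "ends" of $X_n$, be moved at least $E^{\circ r}$-far.

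The missing idea in the paper's argument is a reduction of the Hall-type condition to \emph{independent} sets: by Tutte's perfect $2$-matching theorem (applicable because the relation $E^{\circ s}\setminus E^{\circ r}$ is symmetric), the desired permutation exists as soon as $|N_{G_n}(C)|\geq |C|$ for every independent set $C$ of $G_n$. Independence is what makes the counting close: any two points of $C$ are either in $E^{\circ r}$ or outside $E^{\circ s}$, so "being $E^{\circ r}$-related" is an equivalence relation on $C$ with classes of size at most $N(E^{\circ r})$ whose representatives are pairwise $E^{\circ s}$-separated. One then attaches to each representative $x$ the annulus $D_x=\{y:(x,y)\in E^{\circ\lfloor s/3\rfloor}\setminus E^{\circ r}\}$; these annuli lie in $N_{G_n}(C)$, are pairwise disjoint by the $E^{\circ s}$-separation, and each has at least $\lfloor s/3\rfloor-r$ elements (by running a geodesic out to a point beyond distance $\lfloor s/3\rfloor$, which exists once $|X_n|$ is large). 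Choosing $s$ so that $\lfloor s/3\rfloor-r\geq N(E^{\circ r})$ gives $|D_x|\geq|C_x|$ for each class and hence $|N_{G_n}(C)|\geq|C|$. Without this reduction to independent sets (or some equally substantial substitute), your argument does not establish the lemma.
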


\begin{proof}
Let $r$ be given, and let $s$ be so large that $\lfloor s/3\rfloor-r\geq N(E^{\circ r})$, where $N(E^{\circ r})$ is as in line \eqref{nofe}.  As $|X_n|$ tends to infinity, and using the bounded geometry assumption there exists $N$ such that for all $n\geq N$ and all points $x\in X_n$, there is a point $y=y(x)\in X_n$ such that $(x,y)\not\in E^{\circ \lfloor s/3\rfloor }$.  Fix $n\geq N$, and let $G$ be the graph with vertex set $X_n$ and where two vertices $x,y$ are connected by an edge if and only if $(x,y)$ is in $E^{\circ s}\setminus E^{\circ r}$.  It suffices to show that there is a bijection $\sigma:X_n\to X_n$ such that $(x,\sigma(x))$ is an edge in $G$ for all $x\in X_n$.  It suffices by Tutte's 2-matching theorem (see for example \cite[Proposition 2]{Levine:2001fk}) to show that if $C$ is a subset of $X_n$ no two vertices of which are connected by an edge in $G$, and if we set 
$$
d(C)=\{x\in X_n~|~\text{ there exists } y\in X_n \text{ such that } (x,y) \text{ is an edge of $G$}\}
$$
then $|d(C)|\geq |C|$.

Fix then such a set $C$, and define a relation on $C$ by $x\sim y$ if and only if $(x,y)\in E^{\circ r}$.  This is an equivalence relation: it is symmetric and reflexive as $E$ is symmetric and contains the diagonal.  It is transitive as if $x\sim y$ and $y\sim z$ then $(x,z)$ is in $E^{\circ 2r}$; as $x,z$ are in $C$, they are not connected by an edge in $G$, and so this is impossible unless $(x,z)$ is actually in $E^{\circ r}$.  Fix a subset $C_0$ of $C$ containing one representative of each equivalence class, and for each $x\in C_0$, define
$$
C_x:=\{y\in C~|~(x,y)\in E^{\circ r}\}
$$  
to be its equivalence class, which has at most $N(E^{\circ r})$ members.  For each $x\in C_0$, define
$$
D_x:=\{y\in X_n~|~(x,y)\in E^{\circ\lfloor s/3\rfloor}\setminus E^{\circ r}\}.
$$
The choice of $N$ implies that there exists $y\in X_n$ such that $(x,y)\not\in E^{\circ \lfloor s/3\rfloor}$.  As $E$ is generating, it follows that there is $m\geq \lfloor s/3\rfloor$ and a sequence of distinct points
$$
x=x_0,x_1,...,x_m=y
$$
such that $(x_i,x_{i+1})$ is in $E$ for $i=0,...,m-1$, and $(x,x_i)\not\in E^{\circ i-1}$ for $i=1,...,m$.  Hence in particular $x_{r+1},...,x_{\lfloor s/3\rfloor}$ are in $D_x$, and thus 
$$
|D_x|\geq \lfloor s/3\rfloor-r \geq N(E^{\circ r})\geq |C_x|,
$$
where the central inequality follows by choice of $s$.  Finally, note that if $x$ and $y$ are distinct points in $C_0$, then $(x,y)$ is not an edge and $(x,y)\not\in E^{\circ r}$.  It follows by definition of $G$ that $(x,y)\not\in E^{\circ s}$ and in particular $D_x\cap D_y=\varnothing$.  Moreover, each $D_x$ is contained in $d(C)$ whence
$$
|d(C)|\geq \Big|\bigcup_{x\in C_0}D_x\Big|=\sum_{x\in C_0}|D_x|\geq \sum_{x\in C_0}|C_x|=|C|
$$
completing the proof.
\end{proof}

\begin{proof}[Proof of Lemma \ref{invlem}]
Fix $t>0$, and assume for contradiction that $\xi\in \mathcal{H}_t$ is a constant vector of norm one.  Let $[T]\in \mathcal{H}_t$ be a norm one element coming from $T\in \C_u[X]$ such that $\|[T]-\xi\|_{\mathcal{H}_t}<1/4$.  Let $E$ be a symmetric generating set for the coarse structure that contains the diagonal.  Let $K\in \N$ be such that $T_{xy}=0$ whenever $(x,y)\not\in E^{\circ K}$ and define $\tau:=\sup_{x,y\in X}|T_{xy}|$.

Let $r>2K$ be so large that whenever $(\omega_1,\omega_2)\not\in \partial E^{\circ (r-2K)}$, we have that  
$$
k_t(\omega_1,\omega_2)\leq \frac{1}{4N(T)^2\tau^2}
$$ 
where $N(T)$ is as in line \eqref{noft} (such an $r$ exists by properness of $k$).  Adjusting the extensions of $k_t$ to $\beta_\mathcal{E}X$ if necessary, we may assume that this estimate holds in the stronger form: 
\begin{equation}\label{ksmall}
\text{ for all }(y,z)\not\in E^{\circ (r-2K)},~~ k_t(y,z)\leq \frac{1}{4N(T)^2\tau^2}.
\end{equation}

Let $N$ and $s$ be as in Lemma \ref{graphlem} for $r$ as above, and let $t_n:X_n\to X_n$ be the bijective partial translation given by that lemma for $n\geq N$, and the empty partial translation otherwise.  Let $v\in \C_u[X]$ be the partial translation that is defined using $t_n$ on each $X_n$.  As all but finitely many of the $t_n$ are bijections, $\pi_t(v)$ is a unitary operator on $\mathcal{H}_t$.

Now, for any $n\geq N$ and $x\in X_n$,
\begin{align*}
\langle\langle T,vT\rangle\rangle_t(x)&=\sum_{(y,z)\in X\times X}\overline{T_{xy}}\sum_{u\in X}v_{xu}T_{uz}k_t(y,z) \\&=\sum_{(y,z)\in X\times X}\overline{T_{xy}}T_{t_n^{-1}(x)z}k_t(y,z)
\end{align*}
If the term $\overline{T_{xy}}T_{t_n^{-1}(x)z}$ is non-zero, then $(x,y)$ and $(t_n^{-1}(x),z)$ are in $E^{\circ K}$.  As $(x,t_n^{-1}(x))$ is not in $E^{\circ r}$, however, this forces $(y,z)\not\in E^{\circ (r-2K)}$, and thus by line \eqref{ksmall}, $k_t(x,y)\leq \frac{1}{4N(T)^2\tau^2}$.  Hence for all $x\in X_n$
$$
|\langle\langle T,vT\rangle\rangle_t(x)|\leq \sum_{(y,z)\in X\times X}|\overline{T_{xy}}||T_{t_n^{-1}(x)z}|k_t(y,z)\leq \frac{1}{4}
$$
for all $x\in X$, whence $\phi_n(\langle\langle T,vT\rangle\rangle_t)\leq 1/4$ for all $n$ and so
\begin{equation}\label{small}
|\langle [T],\pi_t(v)[T]\rangle_t|\leq 1/4.
\end{equation}  

On the other hand, the facts that $\pi_t(v)$ is unitary, $\xi$ is constant, $[T]$ and $\xi$ have norm one, and $\|[T]-\xi\|< \frac{1}{4}$ together imply that 
\begin{align*}
|\langle [T],\pi_t(v)[T]\rangle_t| & \geq|\langle \xi,\pi_t(v)\xi\rangle_t|-|\langle [T],\pi_t(v)(\xi-[T])\rangle_t|- |\langle [T]-\xi,\pi_t(v)\xi\rangle_t|\\
& > |\langle \xi,\pi_t(vv^*)\xi\rangle_t|-1/4 -1/4\\
& =1/2.
\end{align*}
This contradicts line \eqref{small}, so we are done.
\end{proof}

The following lemma completes the proof of Theorem \ref{girth}.

\begin{speclem}\label{speclem}
Let $\Delta^E$ be a Laplacian operator in $\C_u[X]$, and let $\epsilon>0$.  Then for all suitably small $t>0$, the spectrum of $\pi_t(\Delta^E)$ contains points from $[0,\epsilon]$.
\end{speclem}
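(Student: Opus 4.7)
The plan is to exhibit a unit vector in $\mathcal{H}_t$ whose Rayleigh quotient against the positive operator $\pi_t(\Delta^E)$ tends to zero as $t \to 0$; by positivity, this forces the infimum of $\sigma(\pi_t(\Delta^E))$ to be small. The natural candidate is $[1] \in \mathcal{H}_t$, the class of the identity $1 \in \C_u[X]$. First I would verify that $\|[1]\|_t = 1$: since $1_{xy} = \delta_{xy}$, a direct computation gives $\langle\langle 1, 1\rangle\rangle_t(x) = k_t(x,x) = 1$ for all $x\in X$ (using the assumption $k_t(x,x) = 1$), so $\phi(\langle\langle 1,1\rangle\rangle_t) = 1$.

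Next I would use Lemma \ref{laplem} together with the identity from the proof of Lemma \ref{poslem} to decompose
\begin{equation*}
\Delta^E = \sum_{i=1}^N \Delta^{E_i} = \sum_{i=1}^N (v_iv_i^* - v_i)^*(v_iv_i^* - v_i),
\end{equation*}
where each $E_i$ is an antisymmetric elementary controlled set with associated partial translation $t_i\colon A_i \to B_i$ and corresponding partial translation operator $v_i$. Since $\pi_t$ is a $*$-representation and $\pi_t(S)[1] = [S]$, this gives
\begin{equation*}
\langle [1], \pi_t(\Delta^E)[1]\rangle_t = \sum_{i=1}^N \phi\bigl(\langle\langle v_iv_i^* - v_i,\, v_iv_i^* - v_i\rangle\rangle_t\bigr),
\end{equation*}
and the problem reduces to bounding each summand.

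A direct matrix-coefficient computation, using $(v_iv_i^*-v_i)_{xy} = +1$ if $x=y\in B_i$ and $-1$ if $x\in B_i,\,y=t_i^{-1}(x)$, shows that for $x\in X$,
\begin{equation*}
\langle\langle v_iv_i^* - v_i,\, v_iv_i^* - v_i\rangle\rangle_t(x) = \chi_{B_i}(x)\bigl[k_t(x,x) + k_t(t_i^{-1}(x),t_i^{-1}(x)) - 2k_t(x,t_i^{-1}(x))\bigr].
\end{equation*}
By Lemma \ref{funprops}(3), the boundary value at $\omega \in \partial X$ is
\begin{equation*}
\langle\langle v_iv_i^* - v_i,\, v_iv_i^* - v_i\rangle\rangle_t(\omega) = 2\chi_{\overline{B_i}}(\omega)\bigl(1 - e^{-t k(\omega,\,\overline{t_i^{-1}}(\omega))}\bigr),
\end{equation*}
where $\overline{t_i^{-1}}$ is the Stone-\v{C}ech extension.

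Since $k\colon \partial_\mathcal{E}X \to \R_+$ is continuous and the set $\partial E_i$ is compact in the weak topology on $\partial_\mathcal{E}X$ (as a closed subset of $\overline{E_i}$), $k$ is bounded on $\partial E_i$ by some constant $K_i>0$. Hence for every $\omega \in \partial X$,
\begin{equation*}
\bigl|\langle\langle v_iv_i^* - v_i,\, v_iv_i^* - v_i\rangle\rangle_t(\omega)\bigr| \leq 2(1 - e^{-tK_i}),
\end{equation*}
and since $\phi$ descends to a state on $C(\partial X)$, the same estimate controls the image under $\phi$. Summing the finitely many $i$'s shows $\langle [1], \pi_t(\Delta^E)[1]\rangle_t \to 0$ as $t\to 0$; positivity of $\pi_t(\Delta^E)$ and $\|[1]\|_t = 1$ then yield $\inf\sigma(\pi_t(\Delta^E)) \leq \langle [1],\pi_t(\Delta^E)[1]\rangle_t \leq \epsilon$ for all sufficiently small $t$, so $\sigma(\pi_t(\Delta^E)) \cap [0,\epsilon] \neq \varnothing$. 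The main technical point is the boundary formula in Step 3, which must be extracted from Lemma \ref{funprops}(3); the rest is bookkeeping and elementary estimates.
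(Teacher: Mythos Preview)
Your proof is correct and shares the paper's core idea: use the class $[1]$ of the identity as a unit test vector and show $\langle [1],\pi_t(\Delta^E)[1]\rangle_t\to 0$ as $t\to 0$, then invoke positivity. The execution differs. The paper computes $\langle\langle I,\Delta^E I\rangle\rangle_t(x)=\sum_z \Delta^E_{xz}k_t(x,z)$ directly, chooses the extension of $k_t$ so that $|1-k_t|<\epsilon/N(E)^2$ on all of $E$ (which is harmless by Lemma~\ref{pitlem}), and then uses the row-sum identity $\sum_y \Delta^E_{xy}=0$ (i.e.\ $\Phi(\Delta^E)=0$) to reduce the estimate to a bound on $\sum_{x,y}\Delta^E_{xy}(1-k_t(x,y))$ over each $X_n$; passage to $\phi$ goes through the approximating states $\phi_n$. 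You instead decompose $\Delta^E$ via Lemmas~\ref{poslem} and~\ref{laplem} into a sum of squares $(v_iv_i^*-v_i)^*(v_iv_i^*-v_i)$, compute each piece explicitly, and pass to the boundary using Lemma~\ref{funprops}(3), so that the smallness follows from boundedness of $k$ on the compact sets $\partial E_i$ without ever touching the extensions or the $\phi_n$. Your route is a little more structural and avoids adjusting extensions; the paper's is shorter and exploits the Laplacian's row-sum property in one line. Both arrive at the same inequality $\inf\sigma(\pi_t(\Delta^E))\leq \langle [1],\pi_t(\Delta^E)[1]\rangle_t$.
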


\begin{proof}
Let $I$ be the identity operator in $\C_u[X]$.  Then for any $x\in X$, we have the formula
$$
\langle\langle I,\Delta_EI\rangle\rangle_t(x)=\sum_{(y,z)\in X\times X}\overline{I_{xy}}\Delta_{xz}k_t(y,z) =\sum_{z}\Delta_{xz}k_t(x,z),
$$ 
whence
\begin{equation}\label{taun}
\phi_n(\langle\langle I,\Delta_EI\rangle\rangle_t)=\frac{1}{|X_n|}\sum_{x,y\in X_n}\Delta_{xy}k_t(x,y).
\end{equation}
Now, by boundedness of $k$ on $\partial E$, there exists $t>0$ such that
$$
\sup_{(\omega_1,\omega_2)\in \partial E}|1-k_t(\omega_1,\omega_2)|<\frac{\epsilon}{N(E)^2}.
$$
We may assume without loss of generality that the extension of $k_t$ to $X\times X$ satisfies 
$$
\sup_{(x,y)\in \partial E}|1-k_t(x,y)|<\frac{\epsilon}{N(E)^2}. 
$$
Looking back at line \eqref{taun}, we have
$$
|\phi_n(\langle\langle I,\Delta_EI\rangle\rangle_t)| \leq \frac{1}{|X_n|}\Big|\sum_{x,y\in X_n}\Delta_{xy}\Big|+\frac{1}{|X_n|}\Big|\sum_{x,y\in X_n}\Delta_{xy}(1-k_t(x,y))\Big|.
$$
A simple computation shows that the first term on the right hand side is zero, however, whence 
$$
|\phi_n(\langle\langle I,\Delta_EI\rangle\rangle_t)|\leq \frac{1}{|X_n|}N(E)|X_n|\sup_{x,y\in E\cap (X_n\times X_n)}|\Delta_{xy}(1-k_t(x,y))|<\epsilon.
$$

Finally, as $\phi$ is a cluster point of the sequence $(\phi_n)$, it follows from this and that $\pi_t(\Delta^E)$ is a positive operator that $\langle [I],\pi_t(\Delta^E)[I]\rangle_t$ is in $[0,\epsilon]$; as the set of values
$$
\{\langle \xi,\pi_t(\Delta^E)\xi\rangle_t~|~\xi\in \mathcal{H}_t\}
$$
is contained in the convex hull of the spectrum of $\pi_t(\Delta^E)$, this completes the proof.
\end{proof}

\section{Questions and comments}\label{qsec}

We conclude the paper with some open questions and comments.

\begin{questions}
\begin{enumerate}
\item Does a `generic' sequence of graphs have geometric property (T)?  This is a strengthening of the well-known fact that a generic sequence of graphs is an expander (see e.g.\ \cite[Proposition 1.2.1]{Lubotzky:1994tw}).  It is also possibly connected to the fact that a `generic' hyperbolic group has property (T) \cite{Zuk:2003jf}.
\item Are there useful necessary and / or  sufficient conditions for geometric property (T) that can be stated purely in terms of graph theoretic properties?  To answer question 1, it is probably necessary to answer this question first.
\item Similarly, are there useful necessary and / or sufficient conditions for conditions (2) and / or (3) from Theorem \ref{girth} that can be stated purely in terms of graph theoretic properties, other than the known condition using girth?
\item We suspect that the results of Section \ref{atmensec} are really part of a general result about groupoids.  Precisely, treating $\C_u[X]$ as the convolution $*$-algebra $C_c(G(X))$ of the coarse groupoid $G(X)$ it is not too difficult to extrapolate the ideas of this paper to define a `topological property (T)' for this groupoid, and indeed any `reasonable' locally compact groupoid.  We then suspect that the results of Section \ref{atmensec} say that this `topological property (T)' is incompatible with a-T-menability in the presence of an invariant measure on the unit space of the groupoid (the existence of an invariant measure corresponds to the amenability of the space in the assumptions of Theorem \ref{bhp}).  It might be interesting to develop this further: for example, Theorem \ref{cinv} naturally corresponds to a statement about Morita invariance of the general `topological property (T)'; but we have no idea if the corresponding general result would be true.
\item Our version of geometric property (T) only really has good properties for disjoint unions of finite metric spaces; in the language of point (4) above, the issue is the presence of an invariant measure on the unit space of the groupoid.  Is there a property that has more interesting consequences in the context of more general metric spaces, e.g.\ including Cayley graphs of infinite groups? Compare for example \cite[Section 11.4.3]{Roe:2003rw}.
\end{enumerate}
\end{questions}

\bibliographystyle{abbrv}

\bibliography{Generalbib}

\end{document}